\newtheorem{theorem}{Theorem}[section]
\newtheorem*{theorem*}{Theorem}
\newtheorem{lemma}[theorem]{Lemma}
\newtheorem{proposition}[theorem]{Proposition}
\newtheorem{corollaire}[theorem]{Corollary}
\newtheorem{conjecture}[theorem]{Conjecture}
\theoremstyle{remark}
  \newtheorem{remark}[theorem]{Remark}
\theoremstyle{definition}
  \newtheorem{definition}[theorem]{Definition}
  \newtheorem{claim}{Claim}
  \newtheorem{problem}[theorem]{Problem}
  \newtheorem{example}[theorem]{Example}
\newcommand{\symgroup}[1]{\mathfrak{S}_{#1}}
\newcommand{\hecke}[1]{\mathcal{H}_{#1}}
\newcommand{\quantumfunctor}[1]{\mathcal{P}^{#1}_q}
\newcommand{\polyfunctor}[1]{\mathcal{P}^{#1}}
\newcommand{\field}{\mathbb{K}}
\newcommand{\integer}{\mathbb{Z}}
\newcommand{\cible}{\mathcal{V}}
\newcommand{\source}[1]{\Gamma^{#1}_q \mathcal{V}}
\newcommand{\koszul}{\kappa}
\DeclareMathOperator{\Hom}{Hom}
\DeclareMathOperator{\modulo}{mod}
\DeclareMathOperator{\Id}{Id}
\DeclareMathOperator{\End}{End}
\DeclareMathOperator{\Ext}{Ext}
\DeclareMathOperator{\Image}{Im}
\DeclareMathOperator{\frobenius}{Fr}
\DeclareMathOperator{\cartier}{c}
\title{Ext-group in the category of quantum polynomial functors via the quantum Frobenius twist}
\author{Deturck Théo}
\date{October 2024}
\begin{document}

\maketitle

\begin{abstract}
    We study the effect of a quantum Frobenius twist on Ext-groups in the category of quantum polynomial functors. We use quantum versions of the de Rham and Koszul complexes, and compute their homologies. We use them to do several Ext-computations, and obtain a formula to compute Ext-groups between two functors obtained via the Frobenius, in characteristic zero or in big enough characteristic. Finally, we make some advancements toward a general formula in arbitrary characteristic.
\end{abstract}

\section{Introduction}

Let $S_q(n,d)$ denote the quantum Schur algebra of rank $n$ and degree $d$ as in \cite{martin1993schur, parshall1991quantum}. Thus $S_q(n,d)\text{-}\mathrm{mod}$ is equivalent to the category of finite dimensional polynomial representations of degree $d$ of the quantum general linear group $GL_q(n)$. This quantum Schur algebra is a deformation of the classical Schur algebra $S_1(n,d)$. When $q$ is a an $\ell$-th root of unity, this algebra is usually not semi-simple (even when the ground field has characteristic zero) and very little is known about $\Ext$ between its modules,  beyond the consequences of the fact that quantum Schur algebras are quasi-hereditary \cite[Theorem 7.5.1]{martin1993schur}.

The purpose of this article is to show that much information about $\Ext$ can be deduced from the knowledge of $\Ext$ between classical Schur algebras.

To be more specific, when $q$ is a an $\ell$-th root of unity ($\ell\ge 2$ odd), the classical Schur algebras and their quantum versions are related by a morphism of algebras 
$$\frobenius: S_q(n,d\ell)\to S_1(n,d)$$
called the quantum Frobenius twist. In particular every classical polynomial representation $M$ of degree $d$ gives by pullback along $\frobenius$ a twisted polynomial representation $M^{(1)_q}$ of degree $d\ell$. Such twisted representations play an important role in the quantum representation theory in type A, for instance every simple polynomial representation of $GL_q(n)$ decomposes \cite[Thm 9.4.1]{parshall1991quantum} as a tensor product:
$$L_\alpha\otimes L_\beta^{(1)_q}$$
where $L_\alpha$ is a simple representation of $GL_q(n)$ with $\ell$-restricted highest weight $\alpha$ and $L_\beta$ is a simple polynomial representation of $GL(n)$. The main purpose of this article is to investigate the homological properties of twisted representations. In particular, we are interested in the following problem. 
\begin{problem}
What is the relation between $\Ext^*_{S_q(n,d\ell)}(M^{(1)_q},N^{(1)_q})$ and $\Ext^*_{S_1(n,d)}(M,N)$?
\end{problem}
We tackle this problem by using the category $\quantumfunctor{}$ of quantum polynomial functors introduced in \cite{hong2017quantum}. This category is a quantum version of the category $\polyfunctor{}_1$ of strict polynomial functors introduced by Friedlander and Suslin in \cite{friedlander1997cohomology}, and the relation of $\quantumfunctor{}$ with quantum Schur algebras generalizes the relation of $\polyfunctor{}_1$ with classical Schur algebras. To be more specific, every quantum polynomial functor $F$ of degree $d$ gives rise to polynomial representations $F(n)$ of degree $d$ of $GL_q(n)$ for $n\ge 1$, and if $n\ge d$ there is a graded isomorphism 
$$\Ext^*_{\quantumfunctor{}}(F,G)\simeq \Ext^*_{S_q(n,d)}(F(n),G(n))\;.$$
The quantum Frobenius twist also exists in the context of polynomial functors, and we are naturally led to investigate the following problem.
\begin{problem}\label{problem2}
What is the relation between $\Ext^*_{\quantumfunctor{}}(F^{(1)_q},G^{(1)_q})$ and $\Ext^*_{\polyfunctor{}_1}(F,G)$?
\end{problem}
Problem \ref{problem2} looks similar to the classical problem of computing extensions between functors precomposed by the classical Frobenius twists, see e.g. \cite{friedlander1997cohomology}, \cite{touze2012troesch}, \cite{chalupnik2005extensions} or the survey \cite{touze2018cohomology}. 
As the first main result of the article, we completely solve this problem when the degrees $d$ of the functors $F$ and $G$ are invertible in the ground field $\field$ over which we are working, that is when $\field$ has characteristic zero, or when $\field$ has positive characteristic $p>d$. In these conditions, we obtain a closed formula computing $\Ext^*_{\quantumfunctor{}}(F^{(1)_q},G^{(1)_q})$ which is  similar to the formula known in the classical case. Namely, every strict polynomial functor $G$ canonically extends \cite[section 2.5]{touze2012troesch} to a graded functor $G^*$ defined on finite dimensional graded vector spaces. Therefore, for all graded vector space $E$ we have a well-defined graded functor $G^*(E\otimes I)$ such that $G^*(E\otimes I)(V)=G^*(E\otimes V)$ (see definition \ref{def:G(V*I)} for more details). We prove the following statement.
\begin{theorem}\label{thm-intro-1}
The Yoneda algebra $E_1=\Ext_{\quantumfunctor{}}(I^{(1)_q},I^{(1)_q})$ is the graded $\field$-algebra generated by a class $e$ of degree $2$,  with unique relation $e^{\ell}=0$:
$$E_1= \field[e]/\langle e^\ell\rangle\;.$$
In particular, as a graded vector space, $E_1^{j}=\field$ if $j=2i$ for $0\le i<\ell$, and $E_1^{j}=0$ otherwise.

If $F$ and $G$ are homogeneous strict polynomial functors of degree $d$, with $d!$ invertible in $\field$, there is an isomorphism, natural with respect to $F$ and $G$:
$$\Ext^k_{\quantumfunctor{}}(F^{(1)_q},G^{(1)_q})\simeq \Hom_{\polyfunctor{}_1}\left(F,G^k(E_1\otimes I)\right)\;.$$
\end{theorem}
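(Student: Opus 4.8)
The plan is to establish the two assertions in order, the first feeding the second. The Yoneda-algebra computation hinges on the quantum de Rham complex in internal degree $\ell$,
\[
\mathrm{dR}_q^{\bullet}:\quad 0\to S_q^{\ell}\to S_q^{\ell-1}\otimes\Lambda_q^{1}\to\cdots\to S_q^{1}\otimes\Lambda_q^{\ell-1}\to\Lambda_q^{\ell}\to 0,
\]
a complex in $\quantumfunctor{\ell}$ whose homology has been computed earlier (the quantum Cartier phenomenon) and consists of two copies of $I^{(1)_q}$, in cohomological degrees $0$ and $\ell-1$. I would splice this into the two short exact sequences it produces and feed these into the long exact sequences for $\Ext^{*}_{\quantumfunctor{}}(I^{(1)_q},-)$. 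The bookkeeping collapses thanks to two facts: $S_q^{\ell}$ is injective in $\quantumfunctor{\ell}$, so it only contributes $\Hom(I^{(1)_q},S_q^{\ell})=\field$; and $\Ext^{*}_{\quantumfunctor{}}(I^{(1)_q},S_q^{a}\otimes\Lambda_q^{b})=0$ for $a,b\ge 1$, which follows from the sum--diagonal adjunction together with the additivity of $I^{(1)_q}$ — the summands of the bifunctor $V,W\mapsto I^{(1)_q}(V\oplus W)$ carry bidegrees $(\ell,0)$ and $(0,\ell)$, never $(a,b)$. The resulting recursion pins down $E_1$ as a graded vector space, giving $E_1^{j}=\field$ for $j=2i$ with $0\le i<\ell$ and $0$ otherwise. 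Then $e^{\ell}=0$ is automatic, and that $e$ generates (equivalently $e^{\ell-1}\neq 0$) I would extract from the DG-algebra structure carried by $\mathrm{dR}_q^{\bullet}$, or by comparison with the known cohomology ring of the first Frobenius--Lusztig kernel.

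For the general formula I would first use that, since $d!$ is invertible in $\field$, the category $\polyfunctor{d}_1$ of degree-$d$ strict polynomial functors is semisimple, equivalent to $\field\symgroup{d}\text{-mod}$. Both sides of the claimed isomorphism are additive bifunctors of $(F,G)$ into graded vector spaces, and the source category being semisimple, each is automatically exact in each variable; so each is determined, up to natural isomorphism, by its values on a set of generators together with its action on morphisms. It thus suffices to (i) construct a natural transformation between the two sides valid for all $F,G$, and (ii) check it is an isomorphism when $F=\Gamma^{\lambda}$ and $G=S^{\mu}$, a projective resp.\ injective generator. For (i), the degree-$2$ class $e\in E_1$ provides, via Yoneda composition, a natural assignment turning a morphism $F\to G^{*}(E_1\otimes I)$ into a class in $\Ext^{*}_{\quantumfunctor{}}(F^{(1)_q},G^{(1)_q})$, exactly as in the classical case.

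For (ii) I would compute $\Ext^{*}_{\quantumfunctor{}}((\Gamma^{\lambda})^{(1)_q},(S^{\mu})^{(1)_q})$ directly: writing $\Gamma^{\lambda}=\bigotimes\Gamma^{\lambda_i}$ and $S^{\mu}=\bigotimes S^{\mu_j}$ and using that $(-)^{(1)_q}$ is exact, additive and compatible with $\otimes$ and with the exponential structures of $\Gamma_q$ and $S_q$, the quantum sum--diagonal adjunction turns this $\Ext$-group into external $\Ext$-groups between $\boxtimes$-products, which over a field obey Künneth cleanly; everything then reduces to the single block $\Ext^{*}_{\quantumfunctor{}}(I^{(1)_q},I^{(1)_q})=E_1$. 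Reassembling, one obtains the expected combination of tensor, divided and symmetric powers of $E_1$, while $\Hom_{\polyfunctor{}_1}(\Gamma^{\lambda},(S^{\mu})^{*}(E_1\otimes I))$ is computed by Yoneda ($\Gamma^{\lambda}$ represents a weight space) and the definition of $(S^{\mu})^{*}(E_1\otimes I)$, and the two expressions coincide naturally in the generators. Here $d!$ invertible is used a second time: it lets every symmetric-group (co)invariant arising in the Künneth bookkeeping split off as a summand, so no correction terms survive.

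I expect this matching step to be the real difficulty: pinning down the quantum sum--diagonal and exponential formalism precisely, controlling how the quantum Frobenius twist interacts with $\otimes$ and with the de Rham and Koszul complexes, and running the combinatorics that identify the outcome with $G^{k}(E_1\otimes I)$. By contrast, the determination of $E_1$ is, once the homology of $\mathrm{dR}_q^{\bullet}$ is in hand, an essentially mechanical recursion — the only genuinely subtle point there being its ring structure rather than merely its additive shape.
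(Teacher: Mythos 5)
Two genuine gaps, one in each half of your plan. For the computation of $E_1$: first, the homology of the quantum de Rham complex $\Omega_\ell^\bullet$ is \emph{not} two copies of $I^{(1)_q}$ in cohomological degrees $0$ and $\ell-1$; by the quantum Cartier isomorphism (theorem \ref{cartier}) it is $H^i(\Omega_\ell)\cong\Omega_1^{i\,(1)_q}$, i.e.\ $I^{(1)_q}$ in degrees $0$ and $1$. With the correct placement, splicing the de Rham complex alone does not close the recursion: you also need the value of $\Ext^*_{\quantumfunctor{}}(I^{(1)_q},\Lambda^\ell_q)$ at the far end, which the paper obtains from the Koszul-kernel subcomplex $N^\bullet_\ell$ (lemma \ref{lem:Ext(twist,kernel)}); the paper in fact gets the whole additive structure of $E_1$ from $N^\bullet_\ell$ and uses the de Rham complex only for the ring structure. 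Second, your justification of the vanishing $\Ext^*_{\quantumfunctor{}}(I^{(1)_q},S^a_q\otimes\Lambda^b_q)=0$ by ``sum--diagonal adjunction plus additivity of $I^{(1)_q}$'' imports classical machinery that is not available (or at least not established) in $\quantumfunctor{}$; the paper deliberately replaces additivity by a weight argument (lemma \ref{lem:generalized pirashvili}), using the injectives $S^\alpha_q$ and $\Hom_{\quantumfunctor{}}(A,S^\alpha_q)\cong A^\#(n)_\alpha$ — your bidegree heuristic is the right intuition, but the mechanism must be the weight one. Finally, the multiplicative statement $e^{\ell-1}\neq 0$ is not proved: the appeal to ``the DG-algebra structure of $\mathrm{dR}_q$'' or to the cohomology of the Frobenius--Lusztig kernel is external and unjustified here, whereas the paper proves it by showing that the composite of the two connecting homomorphisms coming from $0\to I^{(1)_q}\to S^\ell_q\to G\to 0$ and $0\to G\to G_1\to I^{(1)_q}\to 0$ is an isomorphism $E_1^*\to E_1^{*+2}$ compatible with the right Yoneda $E_1$-action, which forces multiplication by $e$ to be an isomorphism in the relevant range.

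For the second half, your reduction to $F=\Gamma^\lambda$, $G=S^\mu$ in the semisimple category $\polyfunctor{d}_1$ is legitimate in principle, but the pivotal step — computing $\Ext^*_{\quantumfunctor{}}((\Gamma^\lambda)^{(1)_q},(S^\mu)^{(1)_q})$ via a ``quantum sum--diagonal adjunction, exponential formalism and K\"unneth'' — presupposes exactly the machinery that is not established for $\quantumfunctor{}$, and you acknowledge without resolving it; the construction of the comparison natural transformation is likewise only gestured at. The paper avoids all of this: under the hypothesis that $d!$ is invertible it writes $F\cong\bigotimes^d\otimes_{\field\symgroup{d}}M$ and $G\cong\bigotimes^d\otimes_{\field\symgroup{d}}N$ (Schur--Weyl, lemma \ref{lem:astensor}), computes $\Ext^*_{\quantumfunctor{}}(\bigotimes^{d\,(1)_q},\bigotimes^{d\,(1)_q})\cong E_1^{\otimes d}\otimes\field\symgroup{d}$ by cup products against the Koszul-kernel complexes $N^{\alpha}_\ell$ together with the weight Pirashvili lemma (theorem \ref{theo:ext(tenseur,tenseur)}) — a step which also needs proposition \ref{property of quantum Frobenius twist}, i.e.\ that the braiding with a twisted functor is the plain switch, so that $\symgroup{d}$ genuinely acts — and then concludes by exactness of $M^*\otimes_{\field\symgroup{d}}-$ and $-\otimes_{\field\symgroup{d}}N$, with naturality built in. So the K\"unneth-type reduction you defer is not a technical detail but the missing core of your argument; either supply that quantum exponential formalism or switch to the Schur--Weyl route.
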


The isomorphism given in theorem \ref{thm-intro-1} allows for easy concrete computations. The next example gives an instance of such a concrete computation, additional concrete computations are given in section \ref{sec:Extlarge}.
\begin{example}
Let $\field$ be a field of characteristic zero or of characteristic $p>d$. Then there is a graded isomorphism:
$$\Ext^*_{\quantumfunctor{}}(\Lambda^{d\,(1)_q},\Lambda^{d\,(1)_q}) \simeq S^d(E_1)\;.$$
\end{example}

The isomorphism of theorem \ref{thm-intro-1} relies on the explicit computation of $\Ext^*_{\quantumfunctor{}}(I^{\otimes d\,(1)_q},I^{\otimes d\,(1)_q})$ in theorem \ref{theo:ext(tenseur,tenseur)}, which is actually valid without any restriction on $\field$.

In order to compute the latter, one of the main computational tools (which may also be of interest for other problems than the ones tackled in this article) is the quantum De Rham complex. We compute its homology, namely we construct in theorem \ref{cartier} a Cartier isomorphism between the quantum Frobenius twist of the classical De Rham complex and the homology of the quantum De Rham complex. Our computation is new, and complements the results of \cite{wambst1993complexes} by giving an explicit example of a complex associated to a Hecke pair which is not exact, but whose homology is nonetheless computable. 

The expert reader will notice that the De Rham complexes are also the main computation tool in the classical setting \cite{friedlander1997cohomology}. However, contrarily to \cite{friedlander1997cohomology}, the calculations done in our proof of theorem \ref{thm-intro-1} avoid spectral sequences.

In the last part of the article we examine problem \ref{problem2} when the characteristic of the ground field $\field$ is small. The situation is significantly more complicated in this case. We do not completely solve problem \ref{problem2} in this context, however, we prove the following result, which is a complete answer to problem \ref{problem2} when $F=I^{(k-1)}$ is the $(k-1)$-th classical Frobenius twist for $k\ge 1$. In this case $F^{(1)_q}$ is denoted by $I^{(k)_q}$, and more generally the quantum twist of the classical twist $G^{(k-1)}$ of a functor $G$ is denoted by $G^{(k)_q}$. (See section \ref{sec:Extsmall}.)
\begin{theorem}\label{thm-intro-2}
The graded vector space $E_r=\Ext_{\quantumfunctor{}}^*(I^{(r)_q},I^{(r)_q})$ is given by
\[
E_r^{j}=
\begin{cases}\field & \text{if $j=2i$ for $0\le i<\ell p^{r-1}$,}\\
0 & \text{otherwise.}
\end{cases}
\]
If $G$ is a homogeneous strict polynomial functor of degree $p^{r}>0$, then there is a graded isomorphism:
$$\Ext^k_{\quantumfunctor{}}(I^{(r+s)_q},G^{(r)_q})\simeq \bigoplus_{p^{s}i+j=k}E_{r}^i\otimes \Ext^i_{\polyfunctor{}_1}\left(I^{(r)},G\right)\;.$$
\end{theorem}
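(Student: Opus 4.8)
The plan is to compute the graded algebras $E_r$ by induction on $r$ and then to obtain the second isomorphism by a relative computation that is natural in $G$; the computational engine throughout is the quantum de Rham complex and its Cartier homology (Theorem~\ref{cartier}), which already underlies Theorem~\ref{theo:ext(tenseur,tenseur)}.

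For $E_r$, the base case $r=1$ is the first assertion of Theorem~\ref{thm-intro-1}, which holds over an arbitrary $\field$ and gives $E_1=\field[e]/\langle e^\ell\rangle$, i.e.\ $E_1^j=\field$ for $j$ even with $0\le j<2\ell$ and $0$ otherwise. For the inductive step one relates $I^{(r)_q}$ to $I^{(1)_q}$ by $r-1$ further \emph{classical} Frobenius twists and invokes a twisting isomorphism for these twists, itself reduced to Theorem~\ref{theo:ext(tenseur,tenseur)} by resolving the functors involved by quantum functors close to twisted tensor powers. Since the classical Frobenius twist is insensitive to the quantization, this is the point where the classical computation of $\Ext^*_{\polyfunctor{}_1}(I^{(r-1)},I^{(r-1)})$ — the graded vector space which is $\field$ in each even degree below $2p^{r-1}$ — enters. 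One thereby obtains a Künneth-type identification of $E_r$ with $E_1\otimes\Ext^*_{\polyfunctor{}_1}(I^{(r-1)},I^{(r-1)})$ up to a regrading by a power of $p$, and unwinding the gradings gives $E_r^j=\field$ precisely for $j$ even with $0\le j<2\ell p^{r-1}$.

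Both the base case and these twisting isomorphisms rest on Theorem~\ref{theo:ext(tenseur,tenseur)}, hence ultimately on the quantum de Rham complex. The Cartier isomorphism of Theorem~\ref{cartier} identifies its homology with the quantum Frobenius twist of the classical de Rham complex, a formality ``up to twist'' that allows one to replace the spectral-sequence arguments used classically in \cite{friedlander1997cohomology} by an explicit manipulation of small complexes built from divided- and symmetric-power quantum functors, against which the relevant Ext-groups are computed by evaluation. In small characteristic the quantum de Rham complex is no longer acyclic, and one must carry along the extra homology recorded by the Cartier isomorphism; this is precisely what makes the final answer $\ell$-periodic rather than $p$-periodic. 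With $E_r$ known, the second isomorphism of the theorem is obtained by the same method applied relatively in $G$, in the spirit of \cite{friedlander1997cohomology}: one feeds the quantum de Rham and Koszul complexes (iterated as needed for the higher twists) into $\Hom_{\quantumfunctor{}}(-,G^{(r)_q})$, identifies the homology via the Cartier isomorphism, and recognizes the outcome as the asserted direct sum — the twisted part of the complex contributing the classical groups $\Ext^*_{\polyfunctor{}_1}(I^{(r)},G)$ and $E_r$ supplying the remaining tensor factor, with the Künneth grading recording how the $s$ surviving classical twists interact with the quantum twist. The special case $G=I^{(r)}$ recovers, and provides a consistency check against, the formula for $E_r$ from the first part.

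The main obstacle is genuinely the small-characteristic regime. There the quantum de Rham and Koszul complexes fail to be exact, so one cannot simply resolve and evaluate; instead the Cartier isomorphism must be used to pin down the homology precisely while keeping every step free of spectral sequences, which forces one to work with carefully chosen complexes rather than plain resolutions. The bookkeeping of gradings is the companion difficulty: the source $I^{(r+s)_q}$ carries $r-1$ classical twists — each rescaling cohomological degrees by $p$ — together with a single quantum twist rescaling by $\ell$, and making the two sides of the formula agree degree by degree requires tracking all these rescalings through the chain of identifications. Finally, in the absence of a full quantum analogue of Touzé's twisting formula in small characteristic, one cannot shortcut the relative computation above by a formal reduction to the untwisted case, so the analysis of the quantum de Rham homology must be carried out in enough generality to see the classical $\Ext^*_{\polyfunctor{}_1}(I^{(r)},G)$ appear.
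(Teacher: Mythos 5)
There is a genuine gap, and it sits exactly at the heart of the small-characteristic difficulty. Your inductive step for $E_r$ invokes a ``twisting isomorphism'' for the $r-1$ classical Frobenius twists sitting inside the quantum category, allegedly ``reduced to Theorem~\ref{theo:ext(tenseur,tenseur)} by resolving the functors involved by quantum functors close to twisted tensor powers.'' No such isomorphism is available: since $\operatorname{char}\field=p\le p^{r-1}$, the functor $I^{(r-1)}$ is not a direct summand of $\bigotimes^{p^{r-1}}$, so Schur--Weyl splitting (lemma~\ref{lem:astensor}) fails, and resolving $I^{(r-1)}$ by tensor powers produces resolutions whose higher homology cannot be discarded; a K\"unneth-type identification $E_r\simeq E_1\otimes\Ext^*_{\polyfunctor{}_1}(I^{(r-1)},I^{(r-1)})$ (with regradings) is precisely the kind of statement whose proof is the hard content here, essentially a special case of conjecture~\ref{conj pos}, not a consequence of theorem~\ref{theo:ext(tenseur,tenseur)}. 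Indeed the paper explicitly records that it does not know how to ``twist again'' extensions between $I^{(r)_q}$ and $I^{(r)_q}$ (which is why even the commutativity of $E_r$ is left open), and your own closing paragraph concedes that no quantum analogue of the twisting formula is at hand --- yet your computation of $E_r$ uses exactly such a formula. The paper instead computes $E_r$ by redoing the Friedlander--Suslin induction on the number of twists (theorem~\ref{Ext}): an induction on $j$ computing $\Ext^*_{\quantumfunctor{}}(I^{(r)_q},S^{p^{r-j}(j)_q})$ via the quantum Koszul and de~Rham complexes, the quantum Pirashvili lemma~\ref{lem:generalized pirashvili}, the Cartier isomorphism, and \emph{hypercohomology spectral sequences}; your claim that every step can be kept free of spectral sequences applies to the large-characteristic theorem~\ref{thm-intro-1}, not to this computation.

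The second half of your argument is also thinner than what is needed. Knowing $E_r$ as a graded vector space does not by itself give the isomorphism for general $G$: the paper first proves a module-theoretic statement (proposition~\ref{freeness of Ext}, that $\Ext^*_{\quantumfunctor{}}(I^{(r+s)_q},S^{p^{r}(s)_q})$ is free of rank one, spanned by $\phi^{(s)_q}$, over the subalgebra of $E_{r+s}$ generated by $e_{r},\dots,e_{r+s-1}$), and then reduces the general case to injective $G=S^\mu$ by the formal d\'evissage of proposition~\ref{prop:formalisation} (Giordano's argument, which works without knowing $\sigma$ is an algebra map), with Pirashvili vanishing disposing of $\mu\neq(p^{r})$. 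Your sketch of ``the same method applied relatively in $G$'' names neither the reduction to injectives nor the freeness input, and these are the two ingredients that actually make the direct-sum formula come out; without them the proposed relative de~Rham/Koszul computation for arbitrary $G$ does not close.
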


The isomorphism given in theorem \ref{thm-intro-2} also allows for easy concrete computations as in the following example. See section \ref{sec:Extsmall} for additional explicit examples.
\begin{example}
For all $r\ge 1$ and all $s\ge 0$ there is a graded isomorphism
$$\Ext^j_{\quantumfunctor{}}(I^{(r+s)_q},S^{p^{r}\,(s)_q})\simeq 
\begin{cases}
\field & \text{if $j=2p^ri$ for $0\le i<p^{s-1}\ell$,}\\
0 & \text{otherwise.}
\end{cases}
$$
\end{example}

The reader will notice that in contrast to theorem \ref{thm-intro-1}, theorem \ref{thm-intro-2} does not give a description of $E_r$ as a graded algebra (for the Yoneda product). We actually almost compute this algebra structure in theorem \ref{presentation E_r}, but we failed to determine a few structure constants. We conjecture that $E_r$ is graded commutative and in that case the missing constants would be equal to one. 

The statement of theorem \ref{thm-intro-2} is very similar to the results of \cite[Thm 4.5]{friedlander1997cohomology} and \cite[Thm 4.16]{giordano2023additive}. To prove theorem \ref{thm-intro-2}, we adapt the methods of \cite{friedlander1997cohomology} in the quantum setting, based on our quantum De Rham complexes and our replacement of Pirashvili's lemma (lemma \ref{lem:generalized pirashvili}), and we formalize the computations of \cite{giordano2023additive} and observe that they work without knowing the complete description of the graded algebra $E_r$.

The statement of theorem \ref{thm-intro-1} and \ref{thm-intro-2} are special cases of the following general conjecture.

\begin{conjecture}\label{conj}
    For all strict polynomial functors $F,G$, there is a graded isomorphism
    \[
        \Ext^k_{\quantumfunctor{}}(F^{(1)_q},G^{(1)_q}) \simeq \bigoplus_{i+j=k} \Ext^i(F,G^j(E_1 \otimes I)).
    \]
\end{conjecture}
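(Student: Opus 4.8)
The plan is to reduce the general case to the case of tensor powers, where everything is known unconditionally by theorem \ref{theo:ext(tenseur,tenseur)}, and then to propagate the answer through functorial resolutions. First I would recall that the quantum Frobenius twist $(-)^{(1)_q}$ is an exact functor from $\polyfunctor{}_1$ to $\quantumfunctor{}$, so it carries projective resolutions to complexes, and that both source categories have enough projectives and injectives of a very explicit nature: divided powers $\Gamma^d$ and symmetric powers $S^d$, together with their tensor products, which are in turn direct summands of tensor powers $I^{\otimes d}$ when $d!$ is invertible but, crucially, \emph{not} in small characteristic. So the core difficulty is exactly the small-characteristic case, where one cannot split off $I^{\otimes d}$ and must work directly with the (co)resolutions by $\Gamma$'s and $S$'s.

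The key steps, in order, would be: (1) Establish the formula first for $F = \Gamma^{d_1}\otimes\cdots\otimes\Gamma^{d_r}$ and $G = S^{e_1}\otimes\cdots\otimes S^{e_s}$. Here one uses that $\Gamma^d(V)^{(1)_q}$ and $S^d(V)^{(1)_q}$ are computed by the quantum de Rham and Koszul complexes, whose homologies are given by the Cartier-type isomorphism of theorem \ref{cartier}; the right-hand side $\Hom_{\polyfunctor{}_1}(\Gamma^{d_1}\otimes\cdots, (S^{e_1}\otimes\cdots)^j(E_1\otimes I))$ can be evaluated explicitly because $\Hom$ out of a divided power is evaluation and because $G^*(E_1\otimes I)$ decomposes along the grading of $E_1$. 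The identification of the two sides should match the one already obtained for tensor powers by applying exactness of $(-)^{(1)_q}$ and naturality to the idempotents cutting $\Gamma,S$ out of $I^{\otimes d}$ in characteristic zero, then checking the resulting combinatorial identity is characteristic-free. (2) Pass from $(\Gamma^\bullet, S^\bullet)$ to arbitrary $F,G$ by taking a projective resolution $P_\bullet\to F$ by sums of $\Gamma$-tensors and an injective coresolution $G\to J^\bullet$ by sums of $S$-tensors, applying $(-)^{(1)_q}$, and computing $\Ext^*_{\quantumfunctor{}}(F^{(1)_q},G^{(1)_q})$ as the cohomology of the totalization of $\Hom_{\quantumfunctor{}}(P_\bullet^{(1)_q}, J^{\bullet\,(1)_q})$. (3) On the right-hand side, show the functor $G\mapsto \bigoplus_{i+j=k}\Ext^i_{\polyfunctor{}_1}(F, G^j(E_1\otimes I))$ sends $G\to J^\bullet$ to a resolution computing the same cohomology — this is where one needs that $G\mapsto G^j(E_1\otimes I)$ is exact on the relevant (co)resolutions and that $E_1$ is flat, both of which hold since $E_1 = \field[e]/\langle e^\ell\rangle$ is a finite-dimensional graded vector space. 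A spectral-sequence or hyper-Ext comparison argument then matches the two totalizations; theorem \ref{thm-intro-2} is the special case $F = I^{(r)}$ (after iterating), which serves as a consistency check.

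The main obstacle I expect is step (1) in small characteristic, specifically controlling the \emph{differentials} of the hyper-cohomology spectral sequence rather than just its $E_1$-page. In characteristic zero the bigraded object degenerates because everything splits off tensor powers and the Yoneda algebra $E_1$ acts freely enough; in small characteristic the quantum de Rham complex is genuinely \emph{not} exact (this is the whole point of theorem \ref{cartier}), so the higher differentials of the resolution-induced spectral sequence need not vanish, and one must show they do — or, more precisely, that the associated graded is nonetheless abstractly isomorphic to the claimed right-hand side. The paper already carries this out for $F = I^{(r)}$ in theorem \ref{thm-intro-2} by adapting the Troesch-complex / Pirashvili-lemma machinery of \cite{friedlander1997cohomology}; extending it to arbitrary $F$ requires a functorial, "parametrized" version of those resolutions (an analogue of Touzé's twist-compatible injective resolutions \cite{touze2012troesch}) whose construction in the quantum setting is, as far as the present article goes, still open — which is why this remains a conjecture. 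A secondary obstacle is purely bookkeeping: verifying that the bigrading collapse $(i,j)\mapsto i+j$ on the left matches $(i,j)\mapsto p^s i + j$ appearing in theorem \ref{thm-intro-2} once one substitutes $G^{(r)_q} = (G^{(r-1)})^{(1)_q}$ and unwinds $E_r$ in terms of $E_1$; this should follow from the algebra structure computed in theorem \ref{presentation E_r}, modulo the unknown structure constants, which is exactly the gap flagged after theorem \ref{thm-intro-2}.
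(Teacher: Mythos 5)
This statement is a \emph{conjecture} in the paper: the article proves only its special cases (theorem \ref{theo:Ext(twisted,twisted)} when $d!$ is invertible in $\field$, and theorem \ref{final theorem} when $F=I^{(r)}$), and explicitly defers the general case to a separate article in preparation based on quantum analogues of Troesch complexes. So there is no proof in the paper to compare yours against, and your proposal does not close the gap either --- indeed you say so yourself at the end. The decisive missing piece is your step (1): computing $\Ext^*_{\quantumfunctor{}}\bigl((\Gamma^{d_1}\otimes\cdots\otimes\Gamma^{d_r})^{(1)_q},(S^{e_1}\otimes\cdots\otimes S^{e_s})^{(1)_q}\bigr)$ in arbitrary characteristic \emph{and} producing an isomorphism with the right-hand side that is natural in both arguments. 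The assertion that one can ``match'' the characteristic-zero identification obtained by splitting off tensor powers and then ``check the resulting combinatorial identity is characteristic-free'' is unsupported: in small characteristic $\Gamma^\mu$ and $S^\lambda$ are not direct summands of $I^{\otimes d}$, and even in the classical case $q=1$ this computation is exactly the hard content of Cha{\l}upnik's and Touz\'e's work, requiring twist-compatible (Troesch-type) coresolutions or a formality argument --- not merely the Cartier isomorphism of theorem \ref{cartier}, which only identifies the homology of the de Rham complex, not the hyper-Ext groups against an arbitrary twisted functor.

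Two further points would still need attention even if step (1) were granted. First, your step (3) comparison of totalizations needs more than an abstract isomorphism on $\Gamma$-tensors and $S$-tensors: to make the two hypercohomology computations agree you need the identification of step (1) to be natural with respect to the maps in the chosen resolutions (a bifunctorial statement, in the spirit of a two-variable version of proposition \ref{prop:formalisation}), otherwise the induced maps of spectral sequences are not defined; constructing such a natural comparison map from the right-hand side $\bigoplus_{i+j=k}\Ext^i_{\polyfunctor{}_1}(F,G^j(E_1\otimes I))$ to $\Ext^k_{\quantumfunctor{}}(F^{(1)_q},G^{(1)_q})$ is itself nontrivial and classically goes through universal extension classes. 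Second, the collapse of the resulting spectral sequence (your ``control of higher differentials'') is precisely what distinguishes the conjecture from the associated-graded statement; the paper only achieves this for $F=I^{(r)}$ via proposition \ref{freeness of Ext} and proposition \ref{prop:formalisation}. In short, your proposal is a sensible research plan that mirrors the classical strategy and correctly locates the obstruction, but it is not a proof, and it should not be presented as one.
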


Further steps towards a complete solution of problem \ref{problem2}, involving quantum analogues of Troesch complexes in the spirit of \cite{touze2012troesch,touze2018cohomology} will appear in an article in preparation \cite{deturck2}.

\section{Quantum polynomial functors}

For the various properties of quantum polynomial functor, and the different conventions and notations that we use, see \cite{hong2017quantum}. Here, we only summarize some of the basic properties and definitions for the reader's convenience. We fix a field $\field$ and a non-zero scalar $q \in \field$.

\subsection{Homogeneous quantum polynomial functors and $q$-Schur algebras}

\begin{definition}
    The Hecke algebra $\hecke{d}$ is the $\field$-algebra generated by $T_1,...,T_{d-1}$ with relations
    \[
        \begin{array}{rclr}
            T_i T_j & = & T_j T_i & \mbox{if } |i-j|>1 \\
            T_i T_{i+1} T_i & = & T_{i+1} T_i T_{i+1} & \\
            (T_i-q)(T_i + q^{-1}) & = & 0 &
         \end{array}
    \]
    As a vector space, $\hecke{d}$ has a standard basis $T_w$ indexed by permutations $w \in \symgroup{d}$. When $q=1$, we find back the algebra of the symmetric group. For $d=0$ or $d=1$, we let $\hecke{d} = \field$.
\end{definition}
Let $V_n = \field^n$ with canonical basis $e_1,...,e_n$. We define a Yang-Baxter operator on $V_n$ by 
\[
    R_n(e_i \otimes e_j) = \left \{
    \begin{array}{ll}
        e_j \otimes e_i & \mbox{if } i < j, \\
        q e_j \otimes e_i & \mbox{if } i=j, \\
        (q-q^{-1}) e_i \otimes e_j + e_j \otimes e_i & \mbox{if } i>j.
    \end{array}
    \right .
\]
These operators satisfy the relation
\[
    (R_n - q)(R_n + q^{-1}) = 0.
\]
We use this operator to define an action of $\hecke{d}$ on $V_n^{\otimes d}$ by letting $T_i$ act as
\[
    \Id^{\otimes i-1} \otimes R_n \otimes \Id^{d-i-1}.
\]
\begin{definition}[see \cite{hong2017quantum}]
Let $\source{d}$ be the category whose objects are vector spaces $V_n^{\otimes d}$, for $n\ge 1$, and whose morphisms are the $\hecke{d}$-linear maps, let $\cible$ be the category of $\field$-vector spaces of finite dimension and $\field$-linear maps. The category $\quantumfunctor{d}$ of homogeneous quantum polynomial functors of degree $d$ is the category of $\field$ linear functors from $\source{d}$ to $\cible$.
\end{definition}

For $F \in \quantumfunctor{d}$, we denote $F(n) = F(V_n^{\otimes d})$. 
When $q=1$ the category $\polyfunctor{d}_1$ is equivalent to the category of strict polynomial functors of Friedlander and Suslin \cite{friedlander1997cohomology}. To avoid cumbersome notation and to emphasize the link with Schur algebras, we introduce the following definition.

\begin{definition}
    We define the rectangular $q$-Schur bimodule $S_q(n,m;d)$ as
    \[
        S_q(n,m;d) = \Hom_{\source{d}}(V_n^{\otimes d},V_m^{\otimes d})
    \]
    When $n=m$, the endomorphism algebra $S_q(n,n;d)$ is equal to the classical $q$-Schur algebra $S_q(n,d)$.
    The composition defines maps $S_q(r,m;d) \otimes S_q(n,r;d) \to S_q(n,m;d)$. In particular, $S_q(n,m;d)$ is an $S_q(m,m;d)-S_q(n,n;d)$-bimodule.
\end{definition}

For $F \in \quantumfunctor{d}$, $F(n)$ is naturally a $S_q(n,n;d)$-module of finite dimension. In fact, we have the following proposition.
\begin{proposition}[see {\cite[theorem 4.7]{hong2017quantum}}]
    The functor
    \[
    \begin{array}{rcl}
        \quantumfunctor{d} & \to & S_q(n,n;d)-mod \\
        F & \mapsto & F(n) 
    \end{array}
    \]
    defines an equivalence of categories when $n \geq d$.
\end{proposition}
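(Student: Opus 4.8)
The plan is to construct an explicit quasi-inverse to the evaluation functor $\mathrm{ev}_n\colon \quantumfunctor{d}\to S_q(n,n;d)\text{-mod}$, $F\mapsto F(n)$, where $F(n)=F(V_n^{\otimes d})$ is given the left $S_q(n,n;d)$-action $s\cdot x:=F(s)(x)$ for $s\in\End_{\source{d}}(V_n^{\otimes d})=S_q(n,n;d)$ (a left action precisely because $F$ is covariant). The candidate inverse is $\Psi(M)=S_q(n,-;d)\otimes_{S_q(n,n;d)}M$, i.e. $\Psi(M)(V_m^{\otimes d})=\Hom_{\source{d}}(V_n^{\otimes d},V_m^{\otimes d})\otimes_{S_q(n,n;d)}M$, which is finite dimensional and depends functorially on $V_m^{\otimes d}$ via post-composition; so $\Psi$ is a well-defined additive (right-exact) functor $S_q(n,n;d)\text{-mod}\to\quantumfunctor{d}$. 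One composite is immediate and uses nothing about $n$: evaluating at $V_n^{\otimes d}$ gives $\mathrm{ev}_n\Psi(M)=S_q(n,n;d)\otimes_{S_q(n,n;d)}M\cong M$, naturally in $M$. (Equivalently, one could phrase the whole argument as: $h_n:=\Hom_{\source{d}}(V_n^{\otimes d},-)$ is a projective generator of $\quantumfunctor{d}$ with endomorphism algebra $S_q(n,n;d)$, and conclude by a Morita-type recognition; I prefer the explicit $\Psi$ since it sidesteps finiteness bookkeeping in the functor category.)

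For the other composite I would introduce the natural transformation $\epsilon_F\colon \Psi(\mathrm{ev}_nF)\Rightarrow F$ defined on $V_m^{\otimes d}$ by $\phi\otimes x\mapsto F(\phi)(x)$; this is well defined over $\otimes_{S_q(n,n;d)}$ since $F(\phi s)=F(\phi)F(s)$, and it is natural in both $V_m^{\otimes d}$ and $F$. A direct unwinding shows that $\mathrm{ev}_n(\epsilon_F)$ is, under the canonical identification above, the identity map of $F(n)$; in particular $\epsilon_F$ becomes an isomorphism after applying the exact functor $\mathrm{ev}_n$. To promote this to the statement "$\epsilon_F$ is an isomorphism" is where the hypothesis $n\ge d$ must enter, through the one structural input of the proof: \emph{for $n\ge d$, every object $V_m^{\otimes d}$ of $\source{d}$ is a direct summand of $V_n^{\otimes d}$ in $\source{d}$}, i.e. there are $\hecke{d}$-linear maps $\iota\colon V_m^{\otimes d}\to V_n^{\otimes d}$ and $\pi\colon V_n^{\otimes d}\to V_m^{\otimes d}$ with $\pi\iota=\Id$. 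I would prove this from the decomposition of $V_n^{\otimes d}$ into weight spaces for the diagonal torus: each weight space is $\hecke{d}$-stable (because $R_n(e_i\otimes e_j)$ always lies in the span of $e_i\otimes e_j$ and $e_j\otimes e_i$) and is isomorphic as an $\hecke{d}$-module to the $q$-permutation module $M^\lambda=\mathrm{Ind}_{\hecke{\lambda}}^{\hecke{d}}\mathbf 1$ attached to the underlying composition $\lambda$ of $d$ (cf. \cite{martin1993schur,parshall1991quantum,hong2017quantum}), and $M^\lambda$ depends only on the partition underlying $\lambda$; since any composition of $d$ has at most $d$ nonzero parts, the inequality $n\ge d$ ensures every $M^\lambda$ occurring in $V_m^{\otimes d}$ already occurs in $V_n^{\otimes d}$. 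This is the only place $n\ge d$ is used; for $n<d$ the evaluation functor is merely a quotient.

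Granting this claim, two consequences finish the proof, both obtained from the trivial identity $y=F(\pi\iota)(y)=F(\pi)\bigl(F(\iota)(y)\bigr)$ for $y\in F(m)$. First, $\epsilon_F$ is surjective at every $V_m^{\otimes d}$: take $x=F(\iota)(y)\in F(n)$, so that $y=\epsilon_F(\pi\otimes x)$. Second, $\mathrm{ev}_n$ reflects the zero object: if $K(n)=0$ then the same identity (applied to $K$) forces $K(m)=0$ for all $m$, hence $K=0$. Now set $K:=\ker\epsilon_F$; since $\mathrm{ev}_n$ is exact and $\mathrm{ev}_n(\epsilon_F)$ is an isomorphism, $K(n)=0$, hence $K=0$; combined with surjectivity this gives that $\epsilon_F$ is an isomorphism, natural in $F$. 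Therefore $\mathrm{ev}_n$ and $\Psi$ are mutually quasi-inverse, and $\quantumfunctor{d}\simeq S_q(n,n;d)\text{-mod}=S_q(n,d)\text{-mod}$.

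The hard part is entirely concentrated in the displayed claim, that is, in controlling $V_n^{\otimes d}$ as an $\hecke{d}$-module and in the identification $\End_{\hecke{d}}(V_n^{\otimes d})=S_q(n,d)$ — this is the quantum Schur–Weyl input, which is available in the cited references. Everything else is formal manipulation in the functor category $\quantumfunctor{d}$, up to minor care with left/right module conventions, which one can also absorb into the standard anti-automorphism of $S_q(n,d)$.
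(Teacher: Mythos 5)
Your overall strategy (evaluation at $V_n^{\otimes d}$ against $S_q(n,-;d)\otimes_{S_q(n,n;d)}-$, with the counit $\epsilon_F(\phi\otimes x)=F(\phi)(x)$) is sound and is essentially the projective-generator argument behind the cited theorem of Hong--Yacobi; the paper itself does not reprove the proposition but refers to that source. However, the one structural input you isolate is false as stated: for $m>n$ the object $V_m^{\otimes d}$ cannot be a direct summand of $V_n^{\otimes d}$ in $\source{d}$, since $\pi\iota=\Id$ forces $\iota$ to be injective while $\dim V_m^{\otimes d}=m^d>n^d=\dim V_n^{\otimes d}$. What is true, and what your argument actually needs, is weaker: for $n\ge d$ the identity of $V_m^{\otimes d}$ factors through a finite direct sum of copies of $V_n^{\otimes d}$, i.e.\ there are $\hecke{d}$-linear maps $\iota_\alpha\colon V_m^{\otimes d}\to V_n^{\otimes d}$ and $\pi_\alpha\colon V_n^{\otimes d}\to V_m^{\otimes d}$ with $\sum_\alpha \pi_\alpha\iota_\alpha=\Id$. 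This follows from the decomposition $V_m^{\otimes d}=\bigoplus_{\alpha\in\Omega(d,m)}V_\alpha$ you invoke: each $V_\alpha$ is isomorphic to the $q$-permutation module attached to the underlying partition of $\alpha$, which has at most $d\le n$ nonzero parts and hence occurs as a summand $V_{\lambda(\alpha)}$ of $V_n^{\otimes d}$; take $\iota_\alpha$ to be the projection onto $V_\alpha$ followed by this identification into $V_n^{\otimes d}$, and $\pi_\alpha$ the reverse composite. The point where your stronger claim breaks is multiplicities: for $m>n$ a given partition occurs in $V_m^{\otimes d}$ more times than in $V_n^{\otimes d}$, so a single copy of $V_n^{\otimes d}$ cannot absorb $V_m^{\otimes d}$.

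With this correction the rest of your proof goes through essentially verbatim, replacing the identity $y=F(\pi)\bigl(F(\iota)(y)\bigr)$ by $y=\sum_\alpha F(\pi_\alpha)\bigl(F(\iota_\alpha)(y)\bigr)$: surjectivity of $\epsilon_F$ at $V_m^{\otimes d}$ holds because $y=\epsilon_F\bigl(\sum_\alpha \pi_\alpha\otimes F(\iota_\alpha)(y)\bigr)$, and $\mathrm{ev}_n$ still reflects the zero functor, so $\ker\epsilon_F$ vanishes at $V_n^{\otimes d}$ and hence everywhere, giving that $\epsilon_F$ is an isomorphism. So the defect is a misstatement of the key lemma rather than a wrong approach; but as written the displayed claim---precisely the step where you say $n\ge d$ enters---is incorrect and must be replaced by the factorization statement above.
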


\subsection{The braided monoidal structure and the duality}

\begin{definition}
The category of quantum polynomial functors is the direct sum
$$\quantumfunctor{} = \bigoplus_{d \geq 0} \quantumfunctor{d}\;.$$
Thus, every quantum polynomial functor is a finite direct sum of homogeneous quantum polynomial functors, and every morphism preserves the degree. 
\end{definition}

We now recall the braided monoidal structure and the duality on $\quantumfunctor{}$.
\begin{definition}
    An element $f \in S_q(n,m;d_1+d_2)$ is an $\hecke{d_1 + d_2}$-linear map. In particular, $f$ is an $\hecke{d_1} \times \hecke{d_2}$-linear map, if we identify $\hecke{d_1} \times \hecke{d_2}$ with the subalgebra of $\hecke{d_1+d_2}$ generated by $T_1,...,T_{d_1-1},T_{d_1+1},...,T_{d_1+d_2-1}$. But
    \[
        \begin{array}{rcl}
            \Hom_{\hecke{d_1} \times \hecke{d_2}}(V_n^{\otimes d_1 + d_2},V_m^{\otimes d_1 + d_2}) & \cong & \Hom_{\hecke{d_1}}(V_n^{\otimes d_1},V_m^{\otimes d_1}) \otimes \Hom_{\hecke{d_2}}(V_n^{\otimes d_2},V_m^{\otimes d_2}) \\
            & = & S_q(n,m;d_1) \otimes S_q(n,m;d_2)
        \end{array}
    \]
    Thus, we can decompose $f$ as a sum
    \[
        f = \sum f_1 \otimes f_2 \in S_q(n,m;d_1) \otimes S_q(n,m;d_2).
    \]
    This defines a map $\Delta : S_q(n,m;d_1+d_2) \to S_q(n,m;d_1) \otimes S_q(n,m;d_2)$. Now, let $F \in \quantumfunctor{d_1}$ and $G \in \quantumfunctor{d_2}$. We define $F \otimes G \in \quantumfunctor{d_1+d_2}$ by
    \[
        (F \otimes G)(n) = F(n) \otimes G(n), \quad (F \otimes G)(f) = \sum F(f_1) \otimes G(f_2)
    \]
    where $\Delta(f) = \sum f_1 \otimes f_2$ as above.
\end{definition}

We introduce some important quantum polynomial functors.
\begin{definition}
    \begin{itemize}
        \item We start by the unit for the monoidal structure. This is the constant functor $\bigotimes^0 \in \quantumfunctor{0}$ given by
        \[
            \begin{array}{cc}
                 \bigotimes^0(n) = \field, \quad \mbox{and} \quad \bigotimes^0(f) = f \ \ \mbox{for } f \in S_q(n,m;0) = \field \Id.
            \end{array}
        \]
        \item Let $I = \bigotimes^1 \in \quantumfunctor{1}$ be the identity functor. It is given by
        \[
            I(n) = V_n \quad \quad \mbox{ and } \quad \quad I(f) = f \quad \mbox{for } f \in S_q(n,m;1) = \Hom_{\field} (V_n, V_m).
        \]
        \item For $d \geq 1$, we let $\bigotimes^d = \underbrace{I \otimes \cdots \otimes I}_{d}$.
    \end{itemize}
    
\end{definition}

\begin{proposition}\label{definition of the braiding}
    The monoidal category $\quantumfunctor{}$ is braided. More precisely, for any $F,G \in \quantumfunctor{}$, there is an isomorphism $R = R_{F,G} : F \otimes G \to G \otimes F$. These isomorphisms are uniquely determined by the following properties :
    \begin{enumerate}
        \item $R_{\bigotimes^1,\bigotimes^1} : \bigotimes^1 \otimes \bigotimes^1 \to \bigotimes^1 \otimes \bigotimes^1$ is given by
        $
            R(v \otimes w) = R_n(v \otimes w)
        $
        for $v,w \in V_n$.
        \item For any $F \in \quantumfunctor{}$, the following diagram commutes:
        \begin{equation}\label{unit of the braiding}
            \begin{tikzcd}
                \bigotimes^0 \otimes F \arrow[rr,"R"] \arrow[dr,"\cong"] 
                && F \otimes \bigotimes^0 \;.\\
                & F \arrow[ur,"\cong"] & 
            \end{tikzcd}
        \end{equation}
    
        \item For any $F,G,H \in \quantumfunctor{}$, the following two diagrams commute
        \begin{equation}\label{(FG)H to H(FG)}
            \begin{tikzcd}
                (F \otimes G) \otimes H \arrow[dr,"1 \otimes R"] \arrow[rr,"R"] 
                & & H \otimes (F \otimes G) \;,\\
                & F \otimes H \otimes G \arrow[ur,"R \otimes 1"] &
            \end{tikzcd}
        \end{equation}
        \begin{equation}\label{F(GH) to (GH)F}
            \begin{tikzcd}
                F \otimes (G \otimes H) \arrow[dr,"R \otimes 1"] \arrow[rr,"R"] 
                & & (G \otimes H) \otimes F \;.\\
                & G \otimes F \otimes H \arrow[ur,"1 \otimes R"] &
            \end{tikzcd}
        \end{equation}
        \item For any pair of natural transformations $f : F \to F'$, $g : G \to G'$, the diagram commutes:
        \begin{equation}\label{natural transformation and braiding}
            \begin{tikzcd}[column sep = huge]
                F \otimes G \arrow[r,"R"] \arrow[d,"f \otimes g"] 
                & G \otimes F \arrow[d,"g \otimes f"] \;.\\
                F' \otimes G' \arrow[r,"R"]
                & G' \otimes F'
            \end{tikzcd}
        \end{equation}
    \end{enumerate}
\end{proposition}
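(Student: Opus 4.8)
The plan is to reduce the statement to the Hecke algebras, treating first the tensor functors $\bigotimes^{d}$ and then propagating to all quantum polynomial functors by exactness and naturality.

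\emph{Step 1 (tensor functors).} Since $\bigotimes^{a}\otimes\bigotimes^{b}=\bigotimes^{a+b}=\bigotimes^{b}\otimes\bigotimes^{a}$ in $\quantumfunctor{a+b}$, a braiding $R_{\bigotimes^{a},\bigotimes^{b}}$ is an endomorphism of $\bigotimes^{a+b}$. First I would record that $\End_{\quantumfunctor{d}}(\bigotimes^{d})\cong\hecke{d}$: since $\bigotimes^{d}$ is the inclusion functor $\source{d}\to\cible$, a natural endomorphism has components $V_n^{\otimes d}\to V_n^{\otimes d}$ commuting with every $\hecke{d}$-linear map, hence — by the double centralizer property of $V_n^{\otimes d}$ over $\hecke{d}$ and $S_q(n,d)$ for $n\ge d$ — given by the action of a single element of $\hecke{d}$; conversely every element of $\hecke{d}$ acts naturally, by the very definition of $\hecke{d}$-linearity. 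Under this identification property~(1) says that $R_{\bigotimes^{1},\bigotimes^{1}}$ is the action of $T_1$, and iterating the hexagons \eqref{(FG)H to H(FG)} and \eqref{F(GH) to (GH)F} (normalised by \eqref{unit of the braiding}) forces $R_{\bigotimes^{a},\bigotimes^{b}}$ to be the action of the standard basis element $T_{w_{a,b}}$, where $w_{a,b}\in\symgroup{a+b}$ is the block transposition of $\{1,\dots,a\}$ with $\{a+1,\dots,a+b\}$; this is simultaneously the uniqueness statement and the definition on tensor functors. The axioms then become bookkeeping in $\hecke{\bullet}$: (1) and \eqref{unit of the braiding} are immediate; the two hexagons translate into the identities $T_{w_{a+b,c}}=(T_{w_{a,c}}\otimes 1)(1\otimes T_{w_{b,c}})$ and its mirror in $\hecke{a+b+c}$, valid because both sides are products along reduced words for the same permutation (equivalently, because $(V_n,R_n)$ is a braided object of $\cible$); and the naturality square \eqref{natural transformation and braiding} for tensor functors and $\hecke{}$-linear arrows holds precisely because the operator $T_{w_{a,b}}$ commutes with every $\hecke{}$-linear map. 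Finally $R_{\bigotimes^{a},\bigotimes^{b}}$ is invertible since each $T_i$ is a unit in $\hecke{}$. Extending additively yields a natural braiding satisfying (1)--(4) on the full subcategory $\mathcal{A}\subset\quantumfunctor{}$ of finite direct sums of tensor functors.

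\emph{Step 2 (all functors).} Tensor functors are injective cogenerators of $\quantumfunctor{}$ (by \cite{hong2017quantum} together with the duality of Section~2.2), so every homogeneous $F$ admits an injective copresentation $0\to F\to P^{0}\xrightarrow{d^0_P}P^{1}$ with $P^{0},P^{1}\in\mathcal{A}$. Given such copresentations $P^{\bullet},Q^{\bullet}$ for $F,G$, biexactness of $\otimes$ realises $F\otimes G$ as the kernel of the morphism $P^{0}\otimes Q^{0}\to(P^{1}\otimes Q^{0})\oplus(P^{0}\otimes Q^{1})$ inside the $\mathcal{A}$-object $P^{0}\otimes Q^{0}$, and likewise $G\otimes F\subset Q^{0}\otimes P^{0}$. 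Applying the naturality square of Step~1 to the $\mathcal{A}$-morphisms $d^0_P\otimes 1$ and $1\otimes d^0_Q$ shows that $R_{P^{0},Q^{0}}$ carries $F\otimes G$ into $G\otimes F$; applying it to $R_{P^{0},Q^{0}}^{-1}$ shows the restriction is an isomorphism $F\otimes G\xrightarrow{\sim}G\otimes F$, which I take as the definition of $R_{F,G}$. Independence of the chosen copresentations follows by comparing two of them through a chain map (which exists by injectivity) and invoking naturality once more; the axioms (1)--(4) for arbitrary $F,G,H$ then reduce, via the copresentations and biexactness, to the versions already established in $\mathcal{A}$, and the same reductions give uniqueness of $R$ in general.

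\emph{Main obstacle.} The delicate part is not Step~1 — the hexagons there are literally the braid relations in $\hecke{\bullet}$ — but the bookkeeping of Step~2: one needs that tensor functors are injective cogenerators, one must treat the two tensor variables simultaneously (a one-variable induction is circular, since $R_{P,G}$ is undefined while $G$ is not yet known to be braided against the tensor functor $P$), and one must verify that the transported braiding is independent of all choices. Exactness of $\otimes$ and the naturality of the Hecke-level braiding are exactly what make this go through.
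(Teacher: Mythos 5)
Your Step 1 is sound and matches how the braiding is pinned down on tensor powers (the paper gets the same identification from \cite[Lemma 5.1]{hong2017quantum}), but Step 2 breaks at its first move: direct sums of tensor functors are \emph{not} an injective cogenerating family of $\quantumfunctor{}$. Each $\bigotimes^d=S_q^{(1,\dots,1)}$ is indeed injective, but by proposition \ref{property of symmetric power} one has $\Hom_{\quantumfunctor{}}(F,\bigotimes^d)\cong F^{\#}(d)_{(1,\dots,1)}$, the weight-$(1,\dots,1)$ space of $F^{\#}$. This vanishes for many nonzero functors: for instance all weights of $I^{(1)_q}$ are divisible by $\ell$, so $\Hom_{\quantumfunctor{}}(I^{(1)_q},\bigotimes^{\ell})=0$ and $I^{(1)_q}$ admits no embedding into any direct sum of tensor powers. (Your premise would hold if the Hecke algebra were semisimple, but the proposition is needed precisely at a root of unity, where the central objects of the paper — the twisted functors — are exactly the functors that fail to embed.) Consequently the injective copresentations $0\to F\to P^{0}\to P^{1}$ with $P^{i}\in\mathcal{A}$ need not exist, and your construction of $R_{F,G}$, the independence-of-choices argument, and the reduction of axioms (1)--(4) to $\mathcal{A}$ all collapse. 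A correct cogenerating family is given by the $S^{\alpha}_q$, which are \emph{quotients} (not subobjects) of tensor powers; so any transport argument must handle quotients of objects of $\mathcal{A}$, e.g.\ by checking that the block-permutation operator preserves the ideals defining $S_q$ and $\Lambda_q$ — a step your proposal does not address and which cannot be obtained from naturality against $\mathcal{A}$-morphisms alone, since the inclusions of those kernels are not morphisms in $\mathcal{A}$.

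For comparison, the paper only uses the subquotient structure for the \emph{uniqueness} half: every functor is a subquotient of a direct sum of tensor powers (embed into sums of $S^{\alpha}_q$, each a quotient of $\bigotimes^d$), and property \eqref{natural transformation and braiding} then forces the braiding on subquotients once it is known on tensor powers. For \emph{existence} it does not transport anything; it invokes the braiding already constructed in \cite[section 5]{hong2017quantum} via the comodule formalism and verifies that it satisfies the listed properties, the only nontrivial check being naturality \eqref{natural transformation and braiding}, done by a short computation with the coaction. If you want a self-contained existence proof along your lines, you would need to replace your Step 2 by such a descent-to-quotients-and-subobjects argument rather than an injective-copresentation argument.
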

This definition of the braiding coincides with the definition given in \cite[section 5]{hong2017quantum}.
\begin{proof}
    The uniqueness of the braiding satisfying these properties follows from the fact that any quantum polynomial functor is a subquotient of some direct sum of tensor power functors $\bigotimes^d$. More precisely, using these properties, we show \cite[Lemma 5.1]{hong2017quantum}. With \eqref{natural transformation and braiding}, we compute the braiding on direct sums of tensor power functors. Again with \eqref{natural transformation and braiding}, we show that this determine the braiding on each of its subquotient. For the existence, we just need to show that the braiding defined in \cite[section 5]{hong2017quantum} satisfies all the properties. The first one is a particular case of \cite[Lemma 5.1]{hong2017quantum}, while the second and the third are parts of the definition of a braiding. So we just need to show the last property. We will use the notations of \cite[section 5]{hong2017quantum}. By definition,
    \[
        R(f \otimes g)(v \otimes w) = \sum \sigma(f(v)_1,g(w)_1) g(w)_0 \otimes f(v)_0.
    \]
    Meanwhile
    \[
        (g \otimes f)R(v \otimes w) = \sum \sigma(v_1,w_1) g(w_0) \otimes f(v_0).
    \]
    But since $f$ and $g$ are natural transformations,
    \[
        \sum f(v)_0 \otimes f(v)_1 = \sum f(v_0) \otimes v_1, \quad \mbox{and} \quad \sum g(w)_0 \otimes g(w)_1 = \sum g(w_0) \otimes w_1.
    \]
    It follows that $R(f \otimes g) = (g \otimes f)R$.
\end{proof}

\begin{definition}
    We identify $V_n$ with its dual vector space $V_n^*$ by identifying the standard basis with its dual basis ($e_i \mapsto e_i^*$). With this identification, we denote by $\sigma(f)$ the transpose of $f\in S_q(n,m;d)$. This defines an anti-automorphism $\sigma : S_q(n,m;d) \to S_q(m,n;d)$.
    
    Now, if $F \in \quantumfunctor{d}$, we let
    \[
        F^\#(n) = (F(n))^*, \quad F^\#(f) = (F(\sigma(f)))^*
    \]
    This define a quantum polynomial functor $F^\# \in \quantumfunctor{d}$. Moreover, if $f : F \to G$ is a natural transformation, then $f^\# : G^\# \to F^\#$ given by $f^\#(n) = (f(n))^*$ is a natural transformation.
\end{definition}
This notion of duality works well with the monoidal structure.
\begin{proposition}[see {\cite[lemma 3.4, proposition 5.6]{hong2017quantum}}]
    For $F,G \in \quantumfunctor{}$,
    \[
        (F \otimes G)^\# = F^\# \otimes G^\#, \quad (R_{F,G})^\# = R_{G^\#,F^\#}\;.
    \]
\end{proposition}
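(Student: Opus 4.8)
\noindent The plan is to handle the two identities separately: the first by direct inspection, and the second by appealing to the uniqueness clause of Proposition~\ref{definition of the braiding}.

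For $(F \otimes G)^\# = F^\# \otimes G^\#$, I would first note that on objects both functors send $V_n^{\otimes d}$ to $(F(n)\otimes G(n))^*$, which we identify canonically with $F(n)^*\otimes G(n)^* = (F^\#\otimes G^\#)(n)$ (all spaces being finite dimensional). On morphisms, the key point is that the transpose anti-automorphism $\sigma$ is compatible with the comultiplication $\Delta$ of the previous subsection, in the sense that $\Delta\circ\sigma = (\sigma\otimes\sigma)\circ\Delta$ as maps $S_q(n,m;d_1+d_2)\to S_q(m,n;d_1)\otimes S_q(m,n;d_2)$. This reduces in turn to the $\sigma$-equivariance of the isomorphism $\Hom_{\hecke{d_1}\times\hecke{d_2}}(V_n^{\otimes d_1+d_2},V_m^{\otimes d_1+d_2})\cong S_q(n,m;d_1)\otimes S_q(n,m;d_2)$, i.e.\ to the fact that the transpose of a map which factors as a tensor product is the tensor product of the transposes. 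That fact holds once one knows that the generators $T_i$ act on $V_n^{\otimes d}$ by operators that are self-adjoint for the standard pairing identifying $V_n$ with $V_n^*$; equivalently that the Yang--Baxter operator $R_n$ is self-adjoint, i.e.\ $\sigma(R_n)=R_n$ when $R_n$ is regarded as an element of $S_q(n,n;2)$, which is a one-line verification on the basis $e_i\otimes e_j$. Granting this, for $f\in S_q(n,m;d_1+d_2)$ with $\Delta(f)=\sum f_1\otimes f_2$ one computes $(F\otimes G)^\#(f) = \big((F\otimes G)(\sigma f)\big)^* = \big(\sum F(\sigma f_1)\otimes G(\sigma f_2)\big)^* = \sum F^\#(f_1)\otimes G^\#(f_2) = (F^\#\otimes G^\#)(f)$, which settles the first identity.

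For the braiding identity, I would first record that $\#$ is a contravariant involution: $F^{\#\#}=F$ and $h^{\#\#}=h$, because $\sigma^2=\Id$, and that $I^\#=I$ and $(\bigotimes^0)^\#=\bigotimes^0$. Combining this with the first identity, the family $R'_{F,G}:=\big(R_{G^\#,F^\#}\big)^\#$ is a natural family of isomorphisms $F\otimes G\to G\otimes F$ (its source is $(F^\#\otimes G^\#)^\#=F\otimes G$ and its target is $(G^\#\otimes F^\#)^\#=G\otimes F$). By the uniqueness part of Proposition~\ref{definition of the braiding}, in order to conclude $R'_{F,G}=R_{F,G}$ — and hence, applying $\#$ and using the involution property, $(R_{F,G})^\#=R_{G^\#,F^\#}$ — it suffices to check that $R'$ satisfies properties (1)--(4) of that proposition. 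Property (1) is again the self-adjointness $\sigma(R_n)=R_n$ together with $I^\#=I$. Properties (2) and (3) follow by applying the contravariant functor $\#$ to the commutative triangles \eqref{unit of the braiding}, \eqref{(FG)H to H(FG)} and \eqref{F(GH) to (GH)F} written for $F^\#, G^\#, H^\#$, using the first identity to rewrite $\#$ of a tensor product and the fact that the unit and associativity constraints are self-dual. Property (4) follows by applying $\#$ to the naturality square \eqref{natural transformation and braiding} for the morphisms $f^\#$ and $g^\#$.

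The only genuinely computational ingredient is the self-adjointness of the Yang--Baxter operator $R_n$ for the standard pairing (equivalently, the $\sigma$-equivariance of $\Delta$); everything else is formal bookkeeping with the contravariant involution $\#$ followed by an appeal to the uniqueness of the braiding. I expect that self-adjointness — or rather, packaging it so that the $\sigma$-equivariance of the $\Hom$-decomposition is transparent — to be the only mildly delicate point of the argument.
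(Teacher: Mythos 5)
Your argument is correct. Note, however, that the paper does not prove this proposition at all: it is quoted from Hong--Yacobi \cite[Lemma 3.4, Proposition 5.6]{hong2017quantum}, where the compatibility of $\#$ with the braiding is checked directly from the explicit description of $R_{F,G}$ in terms of the coalgebra structure (the same formula $R(v\otimes w)=\sum\sigma(v_1,w_1)\,w_0\otimes v_0$ that the paper uses when verifying naturality in Proposition \ref{definition of the braiding}). Your route is genuinely different and fits the present paper nicely: you observe that $\#$ is a contravariant monoidal involution (using $\sigma^2=\Id$, $F^{\#\#}=F$, $(f\otimes g)^\#=f^\#\otimes g^\#$, $I^\#=I$), so the family $R'_{F,G}=(R_{G^\#,F^\#})^\#$ satisfies properties (1)--(4) of Proposition \ref{definition of the braiding} — property (1) because the matrix of $R_n$ in the basis $e_i\otimes e_j$ is symmetric, properties (2)--(4) by dualizing the diagrams \eqref{unit of the braiding}, \eqref{(FG)H to H(FG)}, \eqref{F(GH) to (GH)F}, \eqref{natural transformation and braiding} (note that dualizing interchanges the two hexagon-type diagrams, which is harmless) — and then the uniqueness clause gives $R'=R$, hence $(R_{F,G})^\#=R_{G^\#,F^\#}$. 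What this buys is a proof whose only computation is the self-adjointness of $R_n$ (equivalently the $\sigma$-equivariance of $\Delta$, which also cleanly yields $(F\otimes G)^\#=F^\#\otimes G^\#$ exactly as you compute), at the cost of the formal bookkeeping with the involution; the reference's direct verification avoids invoking uniqueness but requires manipulating the explicit braiding formula. One small point worth making explicit in a write-up: the self-adjointness of the $T_i$-action is already needed for $\sigma$ to land in $S_q(m,n;d)$, i.e.\ for $F^\#$ to be well defined, so it is not an extra hypothesis but the same verification underlying the definition of the duality.
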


\subsection{Weights}

Let us look in more details the $\hecke{d}$-module $V_n^{\otimes d}$. This module decomposes as a direct sum of cyclic $\hecke{d}$-modules \cite[proposition 11.5]{takeuchi2002short} :
\[
    V_n^{\otimes d} = \bigoplus_{\alpha \in \Omega(d,n)} V_\alpha
\]
where $\Omega(d,n)$ is the set of compositions of $d$ in $n$ parts (with possibly some $\alpha_i = 0$). Here, $V_\alpha$ is the $\hecke{d}$-submodule generated by
\[
    e^{\otimes \alpha} = \underbrace{e_1 \otimes \cdots \otimes e_1}_{\alpha_1} \otimes \cdots \otimes \underbrace{e_n \otimes \cdots \otimes e_n}_{\alpha_n}.
\]
This decomposition enables us to define several elements of $S_q(n,n;d) = \End_{\hecke{d}}(V_n^{\otimes d})$, given by the composition
\[
    \xi_\alpha =  V_n^{\otimes d} \twoheadrightarrow V_\alpha \hookrightarrow V_n^{\otimes d}
\]
The $\xi_\alpha$ form a complete set of orthogonal idempotents of $S_q(n,n;d)$. Hence, given any $S_q(n,n;d)$-module $M$, we can decompose $M$ into a direct sum
\[
    M = \bigoplus_{\alpha \in \Omega(d,n)} M_\alpha \quad \quad \mbox{where} \quad M_\alpha = \xi_\alpha M.
\]
The $\alpha$ such that $M_\alpha \neq 0$ are called the weights of $M$. In the same way, if $F \in \quantumfunctor{d}$, then for each $n \geq 1$, $F(n)$ is an $S_q(n,n;d)$-module, and hence we can define the weights of a quantum polynomial functor as the compositions $\alpha$ of $d$ such that, if $\alpha$ has $n$ part, $F(n)_\alpha \neq 0$. 
\begin{proposition}
    Let $F,G \in \quantumfunctor{}$, then for any composition $\alpha$ in $n$ parts,
    \[
        (F \otimes G)(n)_\alpha = \bigoplus_{\alpha^1 + \alpha^2 = \alpha} F(n)_{\alpha^1} \otimes G(n)_{\alpha^2} \quad \mbox{and} \quad F^\#(n)_\alpha = (F(n)_\alpha)^*
    \]
    where the direct sum is over all pairs of compositions $(\alpha^1,\alpha^2)$ in $n$ parts whose sum (coordinate wise) is $\alpha$, and $(F(n)_\alpha)^*$ is the dual vector space of $F(n)_\alpha$.
\end{proposition}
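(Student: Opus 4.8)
The plan is to reduce both statements to the behaviour of the idempotents $\xi_\alpha \in S_q(n,n;d)$, which act as completely explicit linear projections. First I would record the key identification: the $\hecke{d}$-submodule $V_\alpha \subseteq V_n^{\otimes d}$ generated by $e^{\otimes\alpha}$ coincides with the span of the pure tensors $e_{i_1}\otimes\cdots\otimes e_{i_d}$ in which the index $j$ occurs exactly $\alpha_j$ times. Indeed, inspecting the formula for $R_n$ shows that $R_n$, and hence each $T_i$, sends such a pure tensor to a linear combination of pure tensors with the same multiset of indices; so the span of each "weight class" of pure tensors is an $\hecke{d}$-submodule, these spans give a direct sum decomposition of $V_n^{\otimes d}$, and $e^{\otimes\alpha}$ lies in the class $\alpha$, so by \cite[proposition 11.5]{takeuchi2002short} the cyclic submodule it generates is exactly that span. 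Consequently $\xi_\alpha$ is the linear projection of $V_n^{\otimes d}$ onto the coordinate subspace of weight $\alpha$, killing the others.

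For the tensor product I would then compute $\Delta(\xi_\alpha)$. Under the identification $V_n^{\otimes d_1+d_2} = V_n^{\otimes d_1}\otimes V_n^{\otimes d_2}$ used to define $\Delta$, a pure tensor has weight $\alpha$ if and only if its first $d_1$ factors form a pure tensor of some weight $\alpha^1$ and its last $d_2$ factors form one of weight $\alpha^2$ with $\alpha^1+\alpha^2=\alpha$. Hence, as an operator on $V_n^{\otimes d_1}\otimes V_n^{\otimes d_2}$, one has $\xi_\alpha = \sum_{\alpha^1+\alpha^2=\alpha}\xi_{\alpha^1}\otimes\xi_{\alpha^2}$, that is $\Delta(\xi_\alpha)=\sum_{\alpha^1+\alpha^2=\alpha}\xi_{\alpha^1}\otimes\xi_{\alpha^2}$. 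By the definition of $F\otimes G$ this gives $(F\otimes G)(\xi_\alpha)=\sum_{\alpha^1+\alpha^2=\alpha}F(\xi_{\alpha^1})\otimes G(\xi_{\alpha^2})$. Writing $F(n)=\bigoplus_{\beta^1}F(n)_{\beta^1}$ and $G(n)=\bigoplus_{\beta^2}G(n)_{\beta^2}$ and using orthogonality of the idempotents (so $F(\xi_{\alpha^1})$ is the identity on $F(n)_{\alpha^1}$ and zero on the other summands, similarly for $G$), the operator $(F\otimes G)(\xi_\alpha)$ is the projection of $F(n)\otimes G(n)=\bigoplus_{\beta^1,\beta^2}F(n)_{\beta^1}\otimes G(n)_{\beta^2}$ onto $\bigoplus_{\alpha^1+\alpha^2=\alpha}F(n)_{\alpha^1}\otimes G(n)_{\alpha^2}$, giving the claimed description of $(F\otimes G)(n)_\alpha$.

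For the duality statement the only point is that $\sigma(\xi_\alpha)=\xi_\alpha$: since $V_n$ is identified with $V_n^*$ via $e_i\mapsto e_i^*$, the transpose of the orthogonal projection onto a coordinate subspace is that same projection, and $\xi_\alpha$ is exactly such a projection on $V_n^{\otimes d}$. Hence $F^\#(\xi_\alpha)=(F(\xi_\alpha))^*$ is the transpose of the idempotent $F(n)\twoheadrightarrow F(n)_\alpha\hookrightarrow F(n)$, whose image inside $F(n)^*=\bigoplus_\beta (F(n)_\beta)^*$ is the summand $(F(n)_\alpha)^*$; therefore $F^\#(n)_\alpha=(F(n)_\alpha)^*$.

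I do not expect a genuine obstacle here: once the first paragraph is in place the rest is bookkeeping. The one spot deserving a little care is matching the literal definition of $\Delta$ (via the $\hecke{d_1}\times\hecke{d_2}$-linear splitting of the $\Hom$-spaces) with the naive picture of cutting a pure tensor into its two halves, i.e. checking that the weight decomposition of $V_n^{\otimes d}$ is compatible with the factorization $V_n^{\otimes d}=V_n^{\otimes d_1}\otimes V_n^{\otimes d_2}$ as $\hecke{d_1}\times\hecke{d_2}$-modules; this is immediate from the description of $V_\alpha$ as a coordinate subspace.
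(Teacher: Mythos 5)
Your proposal is correct and follows essentially the same route as the paper: identify $\Delta(\xi_\alpha)=\sum_{\alpha^1+\alpha^2=\alpha}\xi_{\alpha^1}\otimes\xi_{\alpha^2}$ via the decomposition of $V_\alpha$ as an $\mathcal{H}_{d_1}\times\mathcal{H}_{d_2}$-module, and check $\sigma(\xi_\alpha)=\xi_\alpha$ for the duality statement. You merely spell out in more detail (via the description of $V_\alpha$ as the span of pure tensors of weight $\alpha$) the two identities that the paper asserts with "by definition" and "we can verify".
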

\begin{proof}
    By definition, as a $\hecke{d_1} \times \hecke{d_2}$-module,
    \[
        V_\alpha = \bigoplus_{\alpha^1 + \alpha^2 = \alpha} V_{\alpha^1} \otimes V_{\alpha^2}
    \]
    where the sum runs over all pairs $(\alpha^1,\alpha^2)$ of compositions with $\alpha^1 \in \Omega(d_1,n)$, $\alpha^2 \in \Omega(d_2,n)$ and $\alpha^1 + \alpha^2 = \alpha$. Hence,
    \[
        \Delta(\xi_\alpha) = \sum_{\alpha^1 + \alpha^2 = \alpha} \xi_{\alpha^1} \otimes \xi_{\alpha^2}
    \]
    where the sum runs over all pairs $(\alpha^1,\alpha^2)$ of compositions such that $\alpha^1 + \alpha^2 = \alpha$. The first equality follows. For the second, we can verify that
    $
        \sigma(\xi_\alpha) = \xi_\alpha
    $,
    whence the second equality.
\end{proof}

\section{The quantum symmetric and exterior algebra}

In this section, we introduce two important quantum polynomial functors, which are quotients of $\bigotimes^d$. To define them, consider the tensor algebra $T(n)$ of $V_n$, and consider the two-sided ideals $I_S(n)$ and $I_\Lambda(n)$ generated by the vectors
\[
    R_n(v_1 \otimes v_2) - q v_1 \otimes v_2 \quad \mbox{for } v_1,v_2 \in V_n
\]
for $I_S(n)$ and
\[
    R_n(v_1 \otimes v_2) + q^{-1} v_2 \otimes v_1 \quad \mbox{for } v_1,v_2 \in V_n
\]
for $I_\Lambda(n)$. These two ideals are graded.
\[
    I_S(n) = \bigoplus_{d \geq 0} I_S^d(n) \quad \mbox{and} \quad I_\Lambda(n) = \bigoplus_{d \geq 0} I_\Lambda^d(n).
\]
The functors $I_S^d$ and $I_\Lambda^d$ are subfunctors of $\bigotimes^d$, hence they define homogeneous quantum polynomial functors of degree $d$. Finally, we define the quantum symmetric and exterior power as the quotients $S^d_q = \bigotimes^d / I_S^d$ and $\Lambda^d_q = \bigotimes^d / I_\Lambda^d$.

The direct sums
\[
    S^*_q(n) = \bigoplus_{d \geq 0} S^d_q(n) \quad \mbox{and} \quad \Lambda^*_q(n) = \bigoplus_{d \geq 0} \Lambda^d_q(n)
\]
are graded algebras with $S^0_q(n) = \Lambda^0_q(n) = \field$ and $S^1_q(n) = \Lambda^1_q(n) = V_n$. They are generated in degree 1 by the standard basis $e_1,...,e_n$ of $V_n$ with relations
\[
    e_j e_i = q e_i e_j \quad \mbox{if } i<j
\]
for $S^*_q(n)$, and
\[
    e_j \wedge e_i = \left \{
    \begin{array}{cc}
        (-q^{-1}) e_i \wedge e_j & \mbox{if } i<j\;,  \\
        0 & \mbox{if } i=j\;,
    \end{array}
    \right .
\]
for $\Lambda^*_q(n)$. The products $\mu_S : S^{d_1}_q \otimes S^{d_2}_q \to S^{d_1 + d_2}_q$ and $\mu_ \Lambda : \Lambda^{d_1}_q \otimes \Lambda^{d_2}_q \to \Lambda^{d_1 + d_2}_q$ define two natural transformations. Moreover, let $R$ denote the braiding of $\quantumfunctor{}$, and for $F,G \in \quantumfunctor{}$, $v \in F(n)$, $w \in G(n)$, denote
\[
    R(v \otimes w) = \sum r(w) \otimes r(v).
\]
Then, if $a,b,c \in S_q(n)$ or $\in \Lambda_q(n)$,
\begin{equation}\label{braiding and product on the left}
    \sum r_1(b) r_2(c) \otimes r_2(r_1(a)) = \sum r(bc) \otimes r(a)
\end{equation}
where the indices denote the order of the application of the braiding. In other words, we first switch $a$ and $b$ via $R$, then $a$ and $c$.  Similarly,
\begin{equation}\label{braiding and product on the right}
    \sum r_2(r_1(c)) \otimes r_2(a)r_1(b) = \sum r(c) \otimes r(ab).
\end{equation}
\begin{remark} The properties \eqref{braiding and product on the left} and \eqref{braiding and product on the right} are part of the definition of a Yang-Baxter algebra, see \cite[section 4]{hashimoto1992quantum}.
\end{remark}
Moreover, for $a \in S^i_q(n)$, $b \in S^j_q(n)$,
\begin{equation}\label{commutativity for symmetric power}
    \sum r(b) r(a) = q^{ij} ab
\end{equation}
and for $a \in \Lambda^i_q(n)$, $b \in \Lambda^j_q(n)$,
\begin{equation}\label{commutativity for exterior power}
    \sum r(b) r(a) = (-q^{-1})^{ij} ab.
\end{equation}
Both relations can be proven using \cite[lemma 5.1]{hong2017quantum}.

Now, to construct the de Rham and the Koszul complexes, we will define coalgebra structure on both $S^*_q$ and $\Lambda^*_q$.
\begin{proposition}
    The map $\bigotimes^{i+j} \to \bigotimes^i \otimes \bigotimes^j$ induced by the action of the Hecke algebra element
    \[
        \sum_{\sigma \in \symgroup{}^{(i,j)}} q^{l(\sigma)} T_{\sigma}
    \]
    where $\symgroup{}^{(i,j)}$ is the set of $(i,j)$-shuffles, passes to the quotient to a map
    \[
        \Delta^{(i,j)} : S^{i+j}_q \to S^i_q \otimes S^j_q
    \]
    These maps $\Delta^{(i,j)}$ define a graded coproduct $\Delta : S^*_q \to S^*_q \otimes S^*_q$. This gives $S^*_q$ a structure of coalgebra (with counit $\epsilon$ the projection to $S^0_q = \field$).
\end{proposition}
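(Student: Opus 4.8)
The plan is to reduce everything to identities in the Hecke algebra $\hecke{i+j}$. Write $\pi_d\colon\bigotimes^d\twoheadrightarrow S^d_q$ for the quotient map and $c^{(i,j)}=\sum_{\sigma\in\symgroup{}^{(i,j)}}q^{l(\sigma)}T_\sigma\in\hecke{i+j}$ for the shuffle element, so that on $V_n^{\otimes i+j}=V_n^{\otimes i}\otimes V_n^{\otimes j}$ the map of the statement is $v\mapsto c^{(i,j)}\cdot v$. The first step is to rewrite the relevant kernels: the degree‑$d$ part of the defining ideal gives $I_S^d(n)=\sum_{k=1}^{d-1}(T_k-q)V_n^{\otimes d}$, hence
\[
\ker(\pi_i\otimes\pi_j)_n=I_S^i(n)\otimes V_n^{\otimes j}+V_n^{\otimes i}\otimes I_S^j(n)=\sum_{k\neq i}(T_k-q)V_n^{\otimes i+j},\qquad \ker(\pi_{i+j})_n=\sum_{k=1}^{i+j-1}(T_k-q)V_n^{\otimes i+j},
\]
the index $k=i$ being excluded from the first sum because $T_i$ acts on the two strands straddling the two tensor blocks. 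Since $\hecke{i+j}V_n^{\otimes i+j}=V_n^{\otimes i+j}$, the existence of $\Delta^{(i,j)}$ comes down to the single algebraic claim that
\[
c^{(i,j)}(T_k-q)\in\sum_{l\neq i}(T_l-q)\,\hecke{i+j}\qquad\text{for all }1\le k\le i+j-1;
\]
granting this, $c^{(i,j)}\cdot\ker(\pi_{i+j})_n\subseteq\ker(\pi_i\otimes\pi_j)_n$, and the resulting factorisation along the epimorphism $\pi_{i+j}$ is automatically a morphism of $\quantumfunctor{i+j}$.

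To prove this claim I would use three facts about $q$-symmetrizers. Let $\hecke{P}=\hecke{i}\otimes\hecke{j}\subseteq\hecke{i+j}$ and let $c_P=\sum_{w\in\symgroup{i}\times\symgroup{j}}q^{l(w)}T_w$, $c_{i+j}=\sum_{w\in\symgroup{i+j}}q^{l(w)}T_w$ be the parabolic and full $q$-symmetrizers. (i) Because $\symgroup{}^{(i,j)}$ is a set of minimal-length coset representatives with additive lengths along the corresponding factorisation of $\symgroup{i+j}$, one has $c_P\,c^{(i,j)}=c_{i+j}$ (with the opposite convention for $\symgroup{}^{(i,j)}$ one uses the mirror identity $c^{(i,j)}c_P=c_{i+j}$ and exchanges left and right throughout via the anti-automorphism $T_w\mapsto T_{w^{-1}}$). (ii) A direct computation using only the quadratic relation $(T_k-q)(T_k+q^{-1})=0$ shows $c_{i+j}(T_k-q)=0$ for every $k$; combined with (i), $c_P\bigl(c^{(i,j)}(T_k-q)\bigr)=0$, i.e. $c^{(i,j)}(T_k-q)\in A:=\{x\in\hecke{i+j}:c_Px=0\}$. (iii) Finally $A=R:=\sum_{l\neq i}(T_l-q)\hecke{i+j}$: the inclusion $R\subseteq A$ is clear because $c_P(T_l-q)=0$ for $l\neq i$ (the parabolic symmetrizer absorbs the generators of $\hecke{P}$), and for equality one compares dimensions via the free decomposition $\hecke{i+j}=\bigoplus_{w\in D}\hecke{P}T_w$ with $|D|=\binom{i+j}{i}$. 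Indeed $c_P\hecke{P}=\field c_P$ gives $c_P\hecke{i+j}=\bigoplus_{w}\field c_PT_w$ and $\dim A=(i+j)!-\binom{i+j}{i}$, while $R=\bigoplus_{w\in D}\bigl(\sum_{l\neq i}(T_l-q)\hecke{P}\bigr)T_w$ and $\sum_{l\neq i}(T_l-q)\hecke{P}$ is the codimension-one right ideal of $\hecke{P}$ with trivial quotient, so $\dim R=\binom{i+j}{i}\,(i!\,j!-1)=(i+j)!-\binom{i+j}{i}=\dim A$. This settles well-definedness.

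For the coalgebra axioms I would argue in the same spirit. Coassociativity is the equality $(\Delta^{(i,j)}\otimes\Id)\circ\Delta^{(i+j,k)}=(\Id\otimes\Delta^{(j,k)})\circ\Delta^{(i,j+k)}$ of morphisms $S^{i+j+k}_q\to S^i_q\otimes S^j_q\otimes S^k_q$; precomposing both sides with the epimorphism $\pi_{i+j+k}$ and unwinding the definitions, each side becomes $\pi_i\otimes\pi_j\otimes\pi_k$ followed by left multiplication by an element of $\hecke{i+j+k}$, namely $\iota_{[1,i+j]}(c^{(i,j)})\,c^{(i+j,k)}$ on the left and $\iota_{[i+1,i+j+k]}(c^{(j,k)})\,c^{(i,j+k)}$ on the right, where $\iota$ denotes the evident inclusions of smaller Hecke algebras. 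By transitivity of the minimal-length coset decompositions through the chains $\symgroup{i}\times\symgroup{j}\times\symgroup{k}\subseteq\symgroup{i+j}\times\symgroup{k}\subseteq\symgroup{i+j+k}$ and $\symgroup{i}\times\symgroup{j}\times\symgroup{k}\subseteq\symgroup{i}\times\symgroup{j+k}\subseteq\symgroup{i+j+k}$, with lengths adding at each stage, both elements equal $\sum_{\sigma\in\symgroup{}^{(i,j,k)}}q^{l(\sigma)}T_\sigma$, so the two composites coincide. The counit axioms reduce to $c^{(0,d)}=c^{(d,0)}=1$, which holds because the relevant coset spaces are singletons.

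The main obstacle is part (iii), the identification $A=R$: it is the only step using genuine structure of the Hecke algebra, and it must hold at an $\ell$-th root of unity, where $\hecke{i+j}$ is not semisimple. What makes it go through is that both of its inputs — $\hecke{i+j}$ being free over the parabolic subalgebra $\hecke{P}$ on the $T_w$-basis, and $c_P\hecke{P}=\field c_P$ — are valid in the generic Hecke algebra and hence survive any specialisation of $q$. Everything else is bookkeeping with reduced words and lengths in symmetric groups.
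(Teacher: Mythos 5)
Your proof is correct, but it takes a genuinely different route from the paper: the paper gives no argument for this proposition at all and instead refers to Hashimoto and Hayashi's general construction of the (co)algebra structures attached to a Hecke symmetry, which also covers the exterior analogue and the braiding/product/coproduct compatibilities stated just after it. You verify everything directly inside the Hecke algebra: you identify $\ker(\pi_{i+j})$ and $\ker(\pi_i\otimes\pi_j)$ with sums of images of the elements $T_k-q$, reduce well-definedness to the membership $c^{(i,j)}(T_k-q)\in\sum_{l\neq i}(T_l-q)\hecke{i+j}$, and prove it from the symmetrizer factorisation $c_Pc^{(i,j)}=c_{i+j}$, the absorptions $c_{i+j}(T_k-q)=0$ and $c_P(T_l-q)=0$, and a dimension count resting only on the freeness of $\hecke{i+j}$ over the parabolic subalgebra and on $c_P\hecke{P}=\field c_P$; all of these inputs hold for every non-zero $q$, in particular at the root of unity where $\hecke{i+j}$ is not semisimple, which is exactly the point that needed care, and coassociativity then follows from transitivity of minimal-length coset representatives, while naturality is indeed automatic because multiplication by a fixed Hecke element commutes with every $\hecke{i+j}$-linear map. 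What your approach buys is a self-contained, specialisation-stable proof independent of the cited reference; what the paper's citation buys is a uniform framework that simultaneously yields the coproduct on $\Lambda^*_q$ and the compatibility identities used later in the paper. Two cosmetic points to align with the paper: its Hecke action on $V_n^{\otimes d}$ is a right action, so in its conventions you should run the mirrored version you already indicate, with the membership $(T_k-q)c^{(i,j)}\in\sum_{l\neq i}\hecke{i+j}(T_l-q)$; and in the coassociativity step the composites are ``act by the Hecke element, then apply $\pi_i\otimes\pi_j\otimes\pi_k$'', the reverse of the order your sentence literally states, though the Hecke-algebra identity you invoke is the correct one either way.
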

\begin{proposition}
    The map $\bigotimes^{i+j} \to \bigotimes^i \otimes \bigotimes^j$ induced by the action of the Hecke algebra element
    \[
        \sum_{\sigma \in \symgroup{}^{(i,j)}} (-q^{-1})^{l(\sigma)} T_{\sigma}
    \]
    where $\symgroup{}^{(i,j)}$ is the set of $(i,j)$-shuffles, passes to the quotient to a map
    \[
        \Delta^{(i,j)} : \Lambda^{i+j}_q \to \Lambda^i_q \otimes \Lambda^j_q
    \]
    These maps $\Delta^{(i,j)}$ define a graded coproduct $\Delta : \Lambda^*_q \to \Lambda^*_q \otimes \Lambda^*_q$. This gives $\Lambda^*_q$ a structure of coalgebra (with counit $\epsilon$ the projection to $\Lambda^0_q = \field$).
\end{proposition}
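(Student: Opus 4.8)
My plan is to follow the argument for $S^*_q$ in the preceding proposition, replacing throughout the $q$‑symmetrizing weight $q^{l(\sigma)}$ by the $q$‑antisymmetrizing weight $(-q^{-1})^{l(\sigma)}$, the symmetric ideal $I_S$ by the exterior ideal $I_\Lambda$, and the relevant Hecke quadratic relations accordingly. Write $X_{i,j} := \sum_{\sigma\in\symgroup{}^{(i,j)}}(-q^{-1})^{l(\sigma)}T_\sigma\in\hecke{i+j}$ for the Hecke element inducing the putative $\Delta^{(i,j)}$; since the $(i,j)$‑shuffles are minimal‑length coset representatives, $X_{i,j}$ is exactly the factor occurring in the length‑additive factorization of the total $q$‑antisymmetrizer $y_d := \sum_{w\in\symgroup d}(-q^{-1})^{l(w)}T_w$ along the parabolic subalgebra $\hecke i\otimes\hecke j\subseteq\hecke{i+j}$.

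First I would establish well‑definedness. Since $\Lambda^{i+j}_q = \bigotimes^{i+j}/I_\Lambda^{i+j}$ and $\Lambda^i_q\otimes\Lambda^j_q = (\bigotimes^i\otimes\bigotimes^j)/(I_\Lambda^i\otimes\bigotimes^j+\bigotimes^i\otimes I_\Lambda^j)$, the map induced by $X_{i,j}$ descends to $\Delta^{(i,j)}$ if and only if, for every $n$,
\[
    X_{i,j}\cdot I_\Lambda^{i+j}(n)\ \subseteq\ I_\Lambda^i(n)\otimes V_n^{\otimes j}\ +\ V_n^{\otimes i}\otimes I_\Lambda^j(n).
\]
Now $I_\Lambda^{i+j}(n)$, being the two‑sided ideal generated by $I_\Lambda^2(n)$, is the sum over $1\le k\le i+j-1$ of the "exterior relations at position $k$", namely $V_n^{\otimes k-1}\otimes I_\Lambda^2(n)\otimes V_n^{\otimes i+j-k-1}$, while the right‑hand side is precisely the sum of those over $k\ne i$. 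I would thus reduce the inclusion to an identity in $\hecke{i+j}$ expressing, for each $k$, the product of $X_{i,j}$ with the relation at position $k$ as a combination of the relations at positions $k'\ne i$. The cases $k\ne i$ are formal: one moves $T_k$ across the $T_\sigma$'s by the braid relations, keeping track of lengths, and rewrites using the two elementary identities $T_m(T_{m\pm1}+q^{-1}) = (T_m+q^{-1})(T_{m\pm1}+q^{-1})-q^{-1}(T_{m\pm1}+q^{-1})$ and $T_m^2+q^{-1}T_m = q(T_m+q^{-1})$, exactly as for $S^*_q$. The case $k=i$ — the boundary relation straddling the two tensor factors — is where the shuffle structure enters essentially; this is the technical heart of the statement, and it is the combinatorics of shuffle cosets together with the Hecke quadratic relation that forces the specific weight $(-q^{-1})^{l(\sigma)}$ (the constant weight $1$, corresponding to the deconcatenation‑type coproduct on $\bigotimes^*$, does not descend to $\Lambda^*_q$). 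Granting this, $X_{i,j}$ induces a natural transformation $\Delta^{(i,j)}:\Lambda^{i+j}_q\to\Lambda^i_q\otimes\Lambda^j_q$.

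Next I would check coassociativity by showing that both iterated comultiplications $\Lambda^{i+j+k}_q\to\Lambda^i_q\otimes\Lambda^j_q\otimes\Lambda^k_q$ are induced by the single element $X_{i,j,k} := \sum_{\sigma\in\symgroup{}^{(i,j,k)}}(-q^{-1})^{l(\sigma)}T_\sigma$, where $\symgroup{}^{(i,j,k)}$ is the set of $(i,j,k)$‑shuffles. Inserting the intermediate parabolic $\symgroup{i+j}\times\symgroup k$ (resp.\ $\symgroup i\times\symgroup{j+k}$) into $\symgroup{i+j+k}/(\symgroup i\times\symgroup j\times\symgroup k)$ factors every $(i,j,k)$‑shuffle, with lengths adding, as an $(i,j)$‑shuffle followed by an $(i+j,k)$‑shuffle (resp.\ a $(j,k)$‑shuffle followed by an $(i,j+k)$‑shuffle); multiplicativity of $w\mapsto(-q^{-1})^{l(w)}$ then yields $X_{i,j,k}=\widetilde X_{i,j}\,X_{i+j,k}=\widehat X_{j,k}\,X_{i,j+k}$, where $\widetilde X_{i,j}$ and $\widehat X_{j,k}$ denote the images of $X_{i,j}$, $X_{j,k}$ under the obvious embeddings of parabolic Hecke subalgebras, and unwinding the definitions of the two composites finishes the argument. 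Counitality is immediate: the unique $(d,0)$‑shuffle and $(0,d)$‑shuffle is the identity permutation, so $\Delta^{(d,0)}$ and $\Delta^{(0,d)}$ are the canonical identifications with $\Lambda^d_q$, which is exactly the statement that $\epsilon$ is a counit. The only step I expect to be genuinely non‑formal is the $k=i$ case of the Hecke‑algebra identity in the well‑definedness argument.
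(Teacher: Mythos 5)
Your overall strategy (reduce everything to identities for the shuffle element $X_{i,j}=\sum_{\sigma\in\symgroup{}^{(i,j)}}(-q^{-1})^{l(\sigma)}T_\sigma$ in $\hecke{i+j}$, then get coassociativity from the length-additive factorization of $(i,j,k)$-shuffles through the intermediate parabolics, and counitality from the triviality of $(d,0)$- and $(0,d)$-shuffles) is reasonable, but the proposal has a genuine gap precisely where you say "Granting this": the descent of the map to the quotient is never actually proved. The case $k=i$, the relation straddling the cut, is not a technical detail to be deferred --- it is the entire content of the proposition, since it is the only place where the shuffle sum and the specific weights $(-q^{-1})^{l(\sigma)}$ are used (as you yourself observe, the unweighted deconcatenation does not descend). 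Without an explicit verification of the corresponding Hecke-algebra identity, e.g.\ that $(T_i+q^{-1})\,X_{i,j}$ lies in $\sum_{k'\neq i}\hecke{i+j}\,(T_{k'}+q^{-1})$, nothing is established. Moreover the $k\neq i$ cases are not closed either: the appeal to "exactly as for $S^*_q$" is empty in the context of this paper, since the symmetric-power proposition is likewise only quoted (both are cited to Hashimoto--Hayashi, section 4), so there is no argument to mirror; and your sketch manipulates $T_k$ on the wrong side --- with the right Hecke action used here, landing in the relation subspaces of the target requires producing factors $(T_{k'}+q^{-1})$ on the right of the resulting Hecke elements (the relation subspace at position $k'$ is the image of right multiplication by $T_{k'}+q^{-1}$, which is not stable under further right multiplication), a point the sketch does not address. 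Relatedly, your coassociativity identity is written in the wrong order for the stated conventions: with shuffles defined as minimal coset representatives of $\symgroup{i+j+k}/(\symgroup{i}\times\symgroup{j}\times\symgroup{k})$ and a right action, the correct (and true) factorizations are $X_{i,j,k}=X_{i+j,k}\,\widetilde X_{i,j}=X_{i,j+k}\,\widehat X_{j,k}$, not the reversed products you wrote; this is fixable, but it signals that the side conventions need care throughout.

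For comparison, the paper gives no proof at all: it refers to Hashimoto--Hayashi, section 4, where the compatibility of the $q$-(anti)symmetrizing shuffle elements with the quadratic relations attached to a Hecke symmetry is established. So an acceptable write-up would either cite that result, or actually carry out the boundary computation at position $i$ (together with the corrected side conventions), which is exactly the step your proposal leaves open.
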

For the proofs, see \cite[section 4]{hashimoto1992quantum}. This coproduct has properties similar to the properties \eqref{braiding and product on the left} and \eqref{braiding and product on the right} of the products. If we use the Sweedler notation,
\[
    \Delta(a) = \sum a_1 \otimes a_2,
\]
then
\begin{equation}\label{braiding and coproduct on the left}
    \sum r_2(r_1(b)) \otimes r_2(a_1) \otimes r_1(a_2) = \sum r(b) \otimes r(a)_1 \otimes r(a)_2,
\end{equation}
\begin{equation}\label{braiding and coproduct on the right}
    \sum r_1(b_1) \otimes r_2(b_2) \otimes r_2(r_1(a)) = \sum r(b)_1 \otimes r(b)_2 \otimes r(a).
\end{equation}
Finally, we also have a relation of the form "$\Delta(ab) = \Delta(a) \Delta(b)$",
\begin{equation}\label{product and coproduct}
    \sum (ab)_1 \otimes (ab)_2 = \sum (\pm q^{\pm 1})^{|a_2||b_1|} a_1 r(b_1) \otimes r(a_2) b_2
\end{equation}
where $|c| = i$ if $c \in S^i_q(n)$ or $\Lambda^i_q(n)$, and $\pm = +$ if $a,b \in S^*_q$, $\pm = -$ if $a,b \in \Lambda^*_q$.

The following proposition recapitulates the result of \cite{hong2017quantum} that we will use, see \cite[Theorem 4.7, Corollary 4.10 and Proposition 6.4]{hong2017quantum}.
\begin{proposition}\label{property of symmetric power}
    If $\alpha \in \Omega(d,n)$, let $S_q^{\alpha} = S_q^{\alpha_1} \otimes \cdots \otimes S_q^{\alpha_n} \in \quantumfunctor{d}$. Then for any $F \in \quantumfunctor{d}$,
    \[
        \Hom_{\quantumfunctor{}}(F,S_q^{\alpha}) \cong F^{\#}(n)_\alpha
    \]
    where $F^\#$ is the dual of $F$. Moreover the $S^\alpha_q$ where $\alpha$ run over all partitions of $d$ form an injective cogenerating family of the category $ \quantumfunctor{d}$.
\end{proposition}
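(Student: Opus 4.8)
The plan is to deduce the whole proposition from the standard dictionary between $\quantumfunctor{d}$ and $S_q(n,d)$-modules for $n\ge d$ together with the structural facts about $S_q^*$ recalled above. First I would establish the Hom-computation $\Hom_{\quantumfunctor{}}(F,S_q^\alpha)\cong F^\#(n)_\alpha$. The key point is that $S_q^d$ is a quotient of $\bigotimes^d$, and dualizing, $(S_q^d)^\#$ is a subfunctor of $(\bigotimes^d)^\#\cong\bigotimes^d$; one identifies $(S_q^d)^\#$ with the divided power functor $\Gamma_q^d$, i.e. the image of the symmetrizing idempotent, so that $\Hom_{\quantumfunctor{}}(\Gamma_q^d,F)\cong F(n)$ evaluated suitably. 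More directly, by the equivalence $\quantumfunctor{d}\simeq S_q(n,n;d)\text{-}\mathrm{mod}$ for $n\ge d$ (take $n=d$ if necessary, then note everything is compatible with the $n\mapsto n+1$ maps), $\Hom_{\quantumfunctor{}}(F,S_q^\alpha)$ becomes $\Hom_{S_q(n,d)}(F(n),S_q^\alpha(n))$, and $S_q^\alpha(n)$ is, by \eqref{commutativity for symmetric power} and the weight decomposition, the permutation-type module whose $\xi_\beta$-components realize exactly the functor $\beta\mapsto \Hom$-dual of the $\alpha$-weight space; evaluating against the idempotent $\xi_\alpha$ yields $\Hom_{S_q(n,d)}(F(n),S_q^\alpha(n))\cong (F(n)_\alpha)^\ast\cong F^\#(n)_\alpha$, the last identification being the definition of $F^\#$ together with the weight compatibility $F^\#(n)_\alpha=(F(n)_\alpha)^*$ proved in the previous proposition.

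Next I would prove that the $S_q^\lambda$, for $\lambda$ a partition of $d$, form an injective cogenerating family. Injectivity of each $S_q^\alpha$ follows from the Hom-formula: the functor $F\mapsto \Hom_{\quantumfunctor{}}(F,S_q^\alpha)\cong F^\#(n)_\alpha$ is exact, because $F\mapsto F^\#$ is exact (duality of finite-dimensional vector spaces) and $F^\#\mapsto F^\#(n)_\alpha=\xi_\alpha F^\#(n)$ is exact (it is a direct summand of the exact evaluation functor $F^\#\mapsto F^\#(n)$, which is exact and faithful for $n\ge d$). Hence $\Hom_{\quantumfunctor{}}(-,S_q^\alpha)$ is exact, i.e. $S_q^\alpha$ is injective. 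For the cogenerating property: given $0\ne F\in\quantumfunctor{d}$, pick $n\ge d$ so that $F(n)\ne0$; then some weight space $F(n)_\alpha\ne0$ with $\alpha\in\Omega(d,n)$, so $F^\#(n)_\alpha=(F(n)_\alpha)^*\ne 0$, whence $\Hom_{\quantumfunctor{}}(F^\#,S_q^\alpha)\ne 0$ — replacing $F$ by $F^\#$ and using that $\#$ is a duality (an anti-equivalence), this gives a nonzero map from $F$ into some $S_q^\alpha$. Finally, since $S_q^\alpha$ for a composition $\alpha$ is, up to reordering the tensor factors via the braiding isomorphisms $R$, isomorphic to $S_q^{\lambda}$ where $\lambda$ is the partition obtained by sorting $\alpha$ — using that $S_q^{i}\otimes S_q^{j}\cong S_q^{j}\otimes S_q^{i}$ via $R_{S_q^i,S_q^j}$, which is an isomorphism by proposition \ref{definition of the braiding} — one may restrict attention to partitions, giving the stated family.

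The main obstacle, and the step deserving the most care, is the precise identification in the Hom-formula: one must check that the pairing $S_q^d(n)\times \Gamma_q^d(n)\to\field$ (equivalently, the identification of $(S_q^d)^\#$ with the appropriate subfunctor of $\bigotimes^d$) is compatible with the Yang–Baxter operator $R_n$ and with the anti-automorphism $\sigma$, so that the weight-space decomposition is respected and the idempotents $\xi_\alpha$ and $\sigma(\xi_\alpha)=\xi_\alpha$ match up. This is exactly the content invoked from \cite[Theorem 4.7, Corollary 4.10, Proposition 6.4]{hong2017quantum}; in a self-contained treatment one would verify it by writing $S_q^\alpha$ as the image of $\bigotimes^{d}$ under the shuffle idempotent $\sum_{\sigma}q^{l(\sigma)}T_\sigma$ associated to the composition $\alpha$, checking this idempotent is $\sigma$-invariant, and matching it against $\xi_\alpha$ after identifying $V_n$ with $V_n^*$. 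Everything else is a formal consequence of exactness of evaluation and of duality.
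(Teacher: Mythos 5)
A preliminary remark on the comparison: the paper itself gives no proof of this proposition --- it is explicitly a recollection of results of Hong--Yacobi (\cite[Theorem 4.7, Corollary 4.10, Proposition 6.4]{hong2017quantum}) --- so the question is only whether your sketch would stand on its own. Its formal part does: granting the identification $\Hom_{\quantumfunctor{}}(F,S_q^\alpha)\cong F^\#(n)_\alpha$, your deduction of injectivity (exactness of $F\mapsto F^\#(n)_\alpha$, since duality and weight-space extraction are exact), of the cogenerating property (a nonzero weight space gives a nonzero map to some $S_q^\alpha$, and injectivity of the $S_q^\alpha$ upgrades this to an embedding into a finite direct sum), the reduction from compositions to partitions via the braiding isomorphisms $R_{S_q^i,S_q^j}$, and the padding-by-zeros device for $\alpha$ with fewer than $d$ parts are all sound.

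The genuine weak point is the Hom formula itself, which is the whole content of the statement: in your main paragraph it is asserted rather than proved (``$S_q^\alpha(n)$ is \dots the permutation-type module whose $\xi_\beta$-components realize exactly \dots''), and the self-contained verification you propose would fail precisely in the situation of this paper. The element $\sum_{\sigma}q^{l(\sigma)}T_\sigma$ is only a quasi-idempotent, and when $q$ is a primitive $\ell$th root of unity its image inside $\bigotimes^{d}$ is in general strictly smaller than $S_q^d$: on $V_1^{\otimes \ell}$ the full symmetrizer acts by the scalar $\prod_{k=1}^{\ell}(k)_{q^2}=0$ since $(\ell)_{q^2}=0$, whereas $S^\ell_q(V_1)=\field\neq 0$. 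The quantum symmetric power is the coinvariants quotient of $\bigotimes^d$, not the image of a symmetrizer, and the two differ exactly at roots of unity (this invariants/coinvariants distinction is also why the proposition is phrased with $F^\#$). The route behind the cited results is instead to split the representable functor $\Hom_{\source{d}}(V_n^{\otimes d},-)$ by the idempotents $\xi_\alpha$ into summands $\Gamma^\alpha_q$ with $\Hom_{\quantumfunctor{}}(\Gamma^\alpha_q,G)\cong G(n)_\alpha$ (Yoneda), to identify $(\Gamma^\alpha_q)^\#\cong S_q^\alpha$ --- equivalently, in Schur-algebra terms, $S_q^\alpha(n)\cong(\xi_\alpha S_q(n,n;d))^*$ with left action through $\sigma$ --- and then to use the duality $\Hom_{\quantumfunctor{}}(F,G^\#)\cong\Hom_{\quantumfunctor{}}(G,F^\#)$ to obtain $\Hom_{\quantumfunctor{}}(F,S_q^\alpha)\cong F^\#(n)_\alpha$. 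Deferring that identification to \cite{hong2017quantum}, as the paper does, is fine; proving it via the symmetrizer image is not.
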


We consider now the decomposition of $S^d_q$ and $\Lambda^d_q$ into weight spaces.
\begin{proposition}
    Let $\alpha \in \Omega(d,n)$. Then $S^d_q(n)_\alpha$ is a one dimensional vector space, spanned by the vector
    \[
        e^\alpha = e_1^{\alpha_1} e_2^{\alpha_2} \cdots e_n^{\alpha_n}
    \]
    The vector space $\Lambda^d_q(n)_\alpha$ is zero if $\alpha$ has at least one part greater than $2$, and if $\alpha$ has all its part equal to $0$ or $1$, then $\Lambda^d_q(n)_\alpha$ is one dimensional. More precisely, if $J = \{j_1,...,j_d\} \subseteq \{1,...,n\}$ has $d$ elements, let $\alpha_J \in \Omega(d,n)$ be the composition defined by
    \[
        (\alpha_J)_i = \left \{
        \begin{array}{cc}
             1 & \mbox{if } i \in J, \\
             0 & \mbox{if } i \not \in J.
        \end{array}
        \right .
    \]
    Then $\Lambda^d_q(n)_{\alpha_J}$ is the one dimensional vector space spanned by
    \[
        \overline{e_J} = e_{j_1} \wedge e_{j_2} \wedge \cdots \wedge e_{j_d}.
    \]
\end{proposition}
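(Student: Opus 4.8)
The plan is to reduce the statement to the known structure of the graded algebras $S^*_q(n)$ and $\Lambda^*_q(n)$, using that the weight decomposition is compatible with the defining quotient maps. First I would note that the projection $\bigotimes^d \twoheadrightarrow S^d_q$ is a morphism in $\quantumfunctor{d}$, so its evaluation $V_n^{\otimes d}\twoheadrightarrow S^d_q(n)$ is $S_q(n,n;d)$-linear and in particular commutes with the weight idempotents $\xi_\alpha$; hence $S^d_q(n)_\alpha$ is exactly the image of $V_\alpha=\xi_\alpha V_n^{\otimes d}$ in $S^d_q(n)$, and likewise $\Lambda^d_q(n)_\alpha$ is the image of $V_\alpha$ in $\Lambda^d_q(n)$. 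Moreover $V_\alpha$ is generated over $\hecke{d}$ by $e^{\otimes\alpha}$, and the formula for $R_n$ shows that $R_n(e_a\otimes e_b)$ is always a linear combination of $e_a\otimes e_b$ and $e_b\otimes e_a$; therefore every element of $V_\alpha$ is a linear combination of standard basis tensors $e_{i_1}\otimes\cdots\otimes e_{i_d}$ of \emph{content} $\alpha$, meaning that the index $k$ occurs exactly $\alpha_k$ times (compare \cite[Prop.~11.5]{takeuchi2002short}).

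The upper bound $\dim\le 1$ is then pure bookkeeping with the algebra relations. In $S^*_q(n)$ the relation $e_je_i=qe_ie_j$ for $i<j$ lets one sort the factors of a degree-$d$ monomial, so the image of $e_{i_1}\otimes\cdots\otimes e_{i_d}$ equals $q^m e^\alpha$ for some integer $m$; hence the image of $V_\alpha$ is the line $\field e^\alpha$ and $\dim S^d_q(n)_\alpha\le 1$. In $\Lambda^*_q(n)$, swapping two adjacent distinct factors of a monomial multiplies it by the unit $-q$ or $-q^{-1}$, while $e_k\wedge e_k=0$. Thus, if some part $\alpha_k\ge 2$, every monomial of content $\alpha$ can be reordered up to a nonzero scalar into one with two adjacent factors $e_k$, hence vanishes, so $\Lambda^d_q(n)_\alpha=0$; and if $\alpha=\alpha_J$, every monomial of content $\alpha_J$ is a nonzero scalar multiple of $\overline{e_J}$, so $\dim\Lambda^d_q(n)_{\alpha_J}\le 1$.

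It remains to prove the non-vanishing $e^\alpha\neq 0$ in $S^d_q(n)$ and $\overline{e_J}\neq 0$ in $\Lambda^d_q(n)$, and this is the one point that is \emph{not} formal: it cannot be read off the relations in isolation, and is the main obstacle. I would deduce it from the fact that $S^*_q(n)$ is the quantum affine space $\field\langle e_1,\dots,e_n\rangle/(e_je_i-qe_ie_j:i<j)$, an iterated Ore extension --- equivalently a flat deformation of the classical polynomial algebra --- which admits the monomials $\{e^\alpha\}$ as a $\field$-basis, and similarly that $\Lambda^*_q(n)$ is the quantum exterior algebra with $\field$-basis $\{\overline{e_J}\}$; both statements are contained in \cite{hashimoto1992quantum} and are consistent with the treatment of $S^d_q$ and $\Lambda^d_q$ in \cite{hong2017quantum}. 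Alternatively, since each weight space has dimension at most $1$, it suffices to know that $\dim S^d_q(n)=|\Omega(d,n)|$ and $\dim\Lambda^d_q(n)=\binom{n}{d}$ --- i.e.\ that these algebras have the classical Hilbert series --- and then a counting argument forces every admissible weight space to be exactly one-dimensional; one could also make the non-vanishing self-contained by a Diamond-lemma/confluence argument for the sorting rewriting system. Combining the two bounds yields the proposition.
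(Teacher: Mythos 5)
Your proposal is correct and follows essentially the same route as the paper: identify $S^d_q(n)_\alpha$ (resp.\ $\Lambda^d_q(n)_\alpha$) with the image of $V_\alpha$ under the quotient map, then use the defining relations to collapse everything onto the single monomial $e^\alpha$ (resp.\ $\overline{e_J}$, or $0$ when some part exceeds $1$); the paper phrases this via cyclicity of $V_\alpha$ and $\mu(e^{\otimes\alpha}\cdot T_w)=q^{\ell(w)}e^\alpha$ rather than your content-and-sorting bookkeeping, but these are the same argument. The only difference is that you make the non-vanishing of $e^\alpha$ and $\overline{e_J}$ explicit (via the standard monomial bases of the quantum symmetric and exterior algebras), a point the paper leaves implicit since it has already recorded those presentations earlier in the section.
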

\begin{proof}
    By definition, $S^d_q(n)_\alpha$ is the image of $V_\alpha$ under the projection map $\mu : V_n^{\otimes d} \twoheadrightarrow S^d_q(n)$. But since $V_\alpha$ is a cyclic $\hecke{d}$-module generated by $e^{\otimes \alpha}$, and since $\mu(e^{\otimes \alpha} \cdot T_w) = q^{\ell(w)} e^\alpha$, $S^d_q(n)_\alpha$ is spanned by $e^\alpha$.

    A similar reasoning holds for $\Lambda^d_q$, but
    \[
        \underbrace{e_1 \wedge \cdots \wedge e_1}_{\alpha_1} \wedge \cdots \wedge \underbrace{e_n \wedge \cdots \wedge e_n}_{\alpha_n} = 0
    \]
    if one of the $\alpha_i$ is greater than $1$.
\end{proof}

\section{Quantum Frobenius twist}
In this section, we introduce quantum Frobenius twists and establish their main properties. We rely on the quantum Frobenius twist for Schur algebras defined in \cite{parshall1991quantum}.

Assume that $q$ is a primitive $\ell$th root of $1$, with $\ell>1$ an odd integer. Then, by \cite[theorem 11.7.1]{parshall1991quantum}, there is a natural algebra homomorphism
\[
    \frobenius : S_q(n,n;d\ell) \to S_1(n,n;d)
\]
such that for $\alpha \in \Omega(d\ell,n)$
\[
    \frobenius(\xi_\alpha) = \left \{
    \begin{array}{cc}
        \xi_{\beta} & \mbox{if } \alpha = \ell\beta, \\
        0 & \mbox{otherwise}.
    \end{array}
    \right .
\]
We want to use it to define more general morphisms $S_q(n,m;dl) \to S_1(n,m;d)$. To do that, take $N \geq n,m$. Then $V_n^{\otimes d}$ can be identified with
\[
    \bigoplus_{\substack{\alpha \in \Omega(d,N) \\ \alpha_{n+1} = \cdots = \alpha_N = 0}} V_\alpha \subseteq V_N^{\otimes d}
\]
and there is a similar identification for $V_m^{\otimes d}$. Thus, given $f \in S_q(n,m;d)$, we can associate to $f$ the following element of $S_q(N,N;d)$
\[
    \tilde{f} : V_N^{\otimes d} \twoheadrightarrow \left ( \bigoplus_{\substack{\alpha \in \Omega(d,N) \\ \alpha_{n+1} = \cdots = \alpha_N = 0}} V_\alpha \right ) \cong V_n^{\otimes d} \xrightarrow{f} V_m^{\otimes d} \cong \left ( \bigoplus_{\substack{\alpha \in \Omega(d,N) \\ \alpha_{m+1} = \cdots = \alpha_N = 0}} V_\alpha \right ) \hookrightarrow V_N^{\otimes d}.
\]
Let
\[
    \xi_{n} = \sum_{\substack{\alpha \in \Omega(d,N) \\ \alpha_{n+1} = \cdots = \alpha_N = 0}} \xi_\alpha, \quad \quad
    \xi_{m} = \sum_{\substack{\alpha \in \Omega(d,N) \\ \alpha_{m+1} = \cdots = \alpha_N = 0}} \xi_\alpha
\]
Then the image of $S_q(n,m;d)$ inside $S_q(N,N;d)$ is
\[
    \xi_m S_q(N,N;d) \xi_n \subseteq S_q(N,N;d)
\]
Now, we apply this in degree $d\ell$. The image by $\frobenius$ of $\xi_n$ is given by
\[
    \frobenius(\xi_n) = \sum_{\substack{\alpha \in \Omega(d,N) \\ \alpha_{n+1} = \cdots = \alpha_N = 0}} \xi_\alpha = \xi_n
\]
where the $\xi_n$ on the right hand side is in $S_1(N,N;d)$. Thus,
\[
    S_q(n,m;d\ell) \cong \xi_m S_q(N,N;d\ell) \xi_n \xrightarrow{\frobenius} \xi_m S_1(N,N;d) \xi_n \cong S_1(n,m;d)
\]
defines a homomorphism $\frobenius : S_q(n,m;d\ell) \to S_1(n,m;d)$. Moreover, since the $\xi_n$ are idempotent, these homomorphisms commute with all the composition maps. This yields a functor $\frobenius : \source{d\ell} \to \Gamma^d_1 \cible$ with 
    \[
        \frobenius(V_n^{\otimes d}) = (\field^n)^{\otimes d}
    \]
    such that for any composition $\alpha \in \Omega(d\ell,n)$,
    \[
        \frobenius(\xi_\alpha) = \left \{
    \begin{array}{cc}
        \xi_{\beta} & \mbox{if } \alpha = \ell\beta, \\
        0 & \mbox{otherwise}
    \end{array}
    \right . \quad \in S_1(n,n;d).
    \]
\begin{definition}
    Pullback along the functor $\frobenius : \source{d\ell} \to \Gamma^d_1 \cible$ defines a functor
    \[
        -^{(1)_q} : \polyfunctor{d}_1 \to \quantumfunctor{d\ell}, \quad F \mapsto F^{(1)_q}
    \]
    called the quantum Frobenius twist.
\end{definition}

\begin{proposition}\label{natural embedding}
    The maps
    \[
        \varphi(n) : S^{d(1)_q}(n) \to S^{d\ell}_q(n), \quad \varphi(n)(e^{\alpha}) = e^{\ell\alpha} 
    \]
    define an injective natural transformation $\varphi : S^{d(1)_q} \to S^{d\ell}_q$.
\end{proposition}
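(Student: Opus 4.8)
The plan is to check that the formulas $\varphi(n)(e^\alpha)=e^{\ell\alpha}$ assemble into a well-defined morphism of $S_q(n,n;d\ell)$-modules for each $n$, that these are compatible with the structure maps $S_q(n,m;d\ell)\to S_q(m,n;d\ell)$ so as to form a natural transformation of functors on $\source{d\ell}$, and finally that each $\varphi(n)$ is injective. Throughout I will use that by the weight computation $S^{d(1)_q}(n)=\bigoplus_{\beta\in\Omega(d,n)}S^d(n)_\beta$, with $S^d(n)_\beta$ one-dimensional spanned by $e^\beta$, and that $S^{d\ell}_q(n)_\alpha$ is one-dimensional spanned by $e^\alpha$ for $\alpha\in\Omega(d\ell,n)$. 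So as a map of graded (by weight) vector spaces, $\varphi(n)$ simply sends the summand indexed by $\beta$ identically onto the summand of $S^{d\ell}_q(n)$ indexed by $\ell\beta$; injectivity is then immediate, since distinct $\beta$ give distinct weights $\ell\beta$, and each coordinate map is an isomorphism of one-dimensional spaces. This disposes of the last clause essentially for free.

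The substance is naturality. Since $\source{d\ell}$ is generated (as a category with the relevant structure) by the objects $V_n^{\otimes d\ell}$ and morphisms in the $S_q(n,m;d\ell)$, and since both $S^{d(1)_q}$ and $S^{d\ell}_q$ are quotients of $\bigotimes^{d\ell}$-type functors whose action is governed by the idempotents $\xi_\alpha$ and the Hecke generators $T_i$, it suffices to check that $\varphi$ commutes with the action of a spanning set of morphisms. First I would verify commutation with the idempotents $\xi_\gamma$, $\gamma\in\Omega(d\ell,n)$: on $S^{d(1)_q}(n)$ the element $\xi_\gamma$ acts via $\frobenius(\xi_\gamma)$, which is $\xi_\beta$ if $\gamma=\ell\beta$ and $0$ otherwise, so it kills everything but the weight-$\beta$ line and fixes that line; on $S^{d\ell}_q(n)$ the element $\xi_\gamma$ projects onto the weight-$\gamma$ line. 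Hence $\varphi(n)\circ\frobenius(\xi_\gamma)$ and $\xi_\gamma\circ\varphi(n)$ both vanish unless $\gamma=\ell\beta$, in which case both send $e^\beta\mapsto e^{\ell\beta}$. This matches. Then I would check commutation with a morphism $V_n^{\otimes d\ell}\to V_m^{\otimes d\ell}$ that shifts weights, for instance the "transport" maps built from basis inclusions $V_n\hookrightarrow V_m$; because $\varphi$ on each side is just the identity on one-dimensional weight spaces under the correspondence $\beta\leftrightarrow\ell\beta$, and these transport maps act on weight-$\beta$ vectors of $S^d$ by relabelling indices (and on weight-$\ell\beta$ vectors of $S^{d\ell}_q$ by the same relabelling, up to a power of $q$ coming from the Hecke action — but $q^{\ell}=1$, so the scalars that appear on the $S^{d\ell}_q$ side are $q^{\ell(\cdot)}=1$ on the relevant subspaces), the two composites agree. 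Since the idempotents $\xi_\gamma$ together with such transport maps span each $S_q(n,m;d\ell)$ — indeed $S_q(n,m;d\ell)=\xi_m S_q(N,N;d\ell)\xi_n$ and $S_q(N,N;d\ell)$ is generated by the $\xi_\gamma$ and by the elements realizing the Hecke/braiding action, all of which preserve the one-dimensional weight lines up to roots of unity — naturality follows.

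I expect the main obstacle to be the bookkeeping of the powers of $q$ in the last step: one must confirm that every scalar discrepancy between the action on $e^\beta\in S^d(n)$ and on $e^{\ell\beta}\in S^{d\ell}_q(n)$ is of the form $q^{\ell m}=1$ for some integer $m$. Concretely, a morphism in $S_q$ acts on the weight line through the Hecke element $\sum_w q^{\ell(w)}T_w$ applied to $e^{\otimes\alpha}$, and on a weight line where all the relevant $\alpha_i$ are multiples of $\ell$ every crossing of equal basis vectors contributes a factor $q$ with total exponent a multiple of $\ell$; since $q$ is a primitive $\ell$-th root of unity these collapse to $1$, whereas on the classical side ($q=1$) they are $1$ to begin with. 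Once this is checked on generators, well-definedness of $\varphi(n)$ as a map out of the quotient $S^{d(1)_q}(n)$ and the commuting-square condition follow simultaneously, and the proposition is proved. The alternative, cleaner route — which I would mention — is to invoke Proposition \ref{property of symmetric power}: $\Hom_{\quantumfunctor{}}(S^{d(1)_q},S^{d\ell}_q)\cong (S^{d\ell}_q)^{\#}(d\ell)_{(d\ell)}$ reduced to the appropriate weight, so exhibiting $\varphi$ amounts to identifying one specific element of a small, explicitly describable $\Hom$-space, after which injectivity is the only remaining point and is handled by the weight-space argument above.
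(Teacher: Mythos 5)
Your injectivity argument via one-dimensional weight lines is fine, but the heart of the proposition is that the formulas $e^{\beta}\mapsto e^{\ell\beta}$ commute with the action of \emph{every} $f\in S_q(n,m;d\ell)$, and your naturality argument has a genuine gap there. The ``spanning set'' you propose does not span: the weight idempotents $\xi_\gamma$ together with relabelling maps induced by basis inclusions $V_n\hookrightarrow V_m$ generate only a small subspace of the rectangular Schur bimodule $S_q(n,m;d\ell)$, and the extra ``elements realizing the Hecke/braiding action'' that you invoke are not morphisms of $\source{d\ell}$ at all (the right action of $T_i$ is not $\hecke{d\ell}$-linear for $d\ell\geq 3$; it lies in the centralizer of the Schur algebra, not in it). A correct generating set would have to include the Schur-algebra images of the quantum divided powers of the Chevalley generators, and for those you would need an explicit formula for $\frobenius(f)$ -- but the only formula available in the paper is $\frobenius(\xi_\alpha)$, so the commuting-square check cannot actually be carried out on generators as described; the $q^{\ell m}=1$ bookkeeping you flag as ``the main obstacle'' is precisely the content you have not supplied. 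Your fallback via Proposition \ref{property of symmetric power} only shows that $\Hom_{\quantumfunctor{}}(S^{d(1)_q},S^{d\ell}_q)$ is one-dimensional; since any weight-preserving linear map has the form $e^{\beta}\mapsto c_\beta e^{\ell\beta}$, this does not by itself show that the particular choice $c_\beta=1$ is the natural one -- you still have to exhibit a nonzero natural transformation and identify it.

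The paper avoids all of this by construction rather than verification: it identifies $S^{d\ell}_q(n)$ with $A_q(n,1)_{d\ell}=S_q(1,n;d\ell)^*$ and $S^{d(1)_q}(n)$ with $A_1(n,1)_d=S_1(1,n;d)^*$, and observes that $\varphi(n)$ is literally the dual of the Frobenius map $\frobenius:S_q(1,n;d\ell)\to S_1(1,n;d)$ of Parshall--Wang; naturality is then automatic from the compatibility of $\frobenius$ with the composition products (it is a bialgebra morphism on coordinate algebras). If you want to salvage your hands-on approach, you would in effect have to reprove that compatibility on a genuine generating set of the Schur algebras, which amounts to redoing the Parshall--Wang construction; the dualization argument is the efficient route.
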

\begin{proof}
    Identify $S_q^{d\ell}(n)$ with $A_q(n,1)_{d\ell} = S_q(1,n;d\ell)^*$ (see \cite[section 2.2]{hong2017quantum}), and similarly identify $S^{d(1)_q}(n)$ with $A_1(n,1)_d = S_1(1,n;d)^*$. Then this map is the dual of the quantum Frobenius twist (by definition of the quantum Frobenius twist, see \cite[section 7.2 and theorem 11.7.1]{parshall1991quantum}) which define a natural transformation by compatibility with the products.
\end{proof}

\begin{proposition}\label{property of quantum Frobenius twist}
    The quantum Frobenius twist is fully faithful and exact. Moreover :
    \begin{itemize}
        \item For any $F,G \in \polyfunctor{}_1$, 
        \[
            (F \otimes G)^{(1)_q} = F^{(1)_q} \otimes G^{(1)_q}, \quad \mbox{and} \quad (R_{F,G})^{(1)_q} = R_{F^{(1)_q},G^{(1)_q}}\;. 
        \]
        \item For any $F \in \polyfunctor{}_1$,
        \[
            (F^\#)^{(1)_q} = (F^{(1)_q})^\#\;.
        \]
        \item For any $f \in \polyfunctor{d}_1$ and any $\alpha \in \Omega(d\ell,n)$,
        \[
            F^{(1)_q}(n)_\alpha = \left \{
            \begin{array}{cc}
                 F(n)_\beta & \mbox{if } \alpha = \ell \beta, \\
                 0 & \mbox{otherwise.} 
            \end{array}
            \right .
        \]
    \end{itemize}
    Furthermore, for any $F  \in \quantumfunctor{}$ and any $G \in \polyfunctor{}_1$, $R_{F,G^{(1)_q}}$ and $R_{G^{(1)_q},F}$ are switch maps, i.e are given by
    \[
        R(v \otimes w) = w \otimes v.
    \]
\end{proposition}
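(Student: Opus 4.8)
The plan is to settle the five assertions in order of increasing difficulty, the content being concentrated in the braiding statements, which I treat last.

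\textbf{Exactness, weights, monoidality, duality.} Since $-^{(1)_q}$ is pullback along $\frobenius$, on objects $F^{(1)_q}(V_n^{\otimes d\ell})=F((\field^n)^{\otimes d})$, so short exact sequences are preserved objectwise; moreover $F^{(1)_q}(\xi_\alpha)=F(\frobenius(\xi_\alpha))$ is the projector onto $F(n)_\beta$ when $\alpha=\ell\beta$ and is $0$ otherwise, which is precisely the weight formula. For $(F\otimes G)^{(1)_q}=F^{(1)_q}\otimes G^{(1)_q}$ I would unwind the definition of $\otimes$ and reduce to the identity $\Delta\circ\frobenius=(\frobenius\otimes\frobenius)\circ\Delta$ on each $S_q(n,m;d\ell)$, and for $(F^\#)^{(1)_q}=(F^{(1)_q})^\#$ to the identity $\sigma\circ\frobenius=\frobenius\circ\sigma$. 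Both hold because $\frobenius$ is by construction the transpose of a bialgebra homomorphism $A_1(n)\to A_q(n)$ intertwining the transposition $a_{ij}\leftrightarrow a_{ji}$ \cite{parshall1991quantum}; the two unwindings are then routine.

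\textbf{Full faithfulness.} I would invoke the general fact that pullback along a functor which is bijective on objects and full is fully faithful: a natural transformation between two pulled-back functors extends, by transport along objects, to one upstairs, and fullness of the base functor makes this extension well defined and natural. Here $\frobenius:\source{d\ell}\to\Gamma^d_1\cible$ is bijective on objects by construction, and full because each $\frobenius:S_q(n,m;d\ell)\to S_1(n,m;d)$ is surjective: writing $S_q(n,m;d\ell)\cong\xi_m S_q(N,N;d\ell)\xi_n$ and using $\frobenius(\xi_n)=\xi_n$, $\frobenius(\xi_m)=\xi_m$, surjectivity reduces to that of $\frobenius:S_q(N,N;d\ell)\to S_1(N,N;d)$, which holds since that map is dual to the inclusion $A_1(N)\hookrightarrow A_q(N)$ \cite{parshall1991quantum}.

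\textbf{The braiding is a switch map (the main point).} I would first reduce the claim that $R_{F,G^{(1)_q}}$ and $R_{G^{(1)_q},F}$ are switch maps to the case $F=I$, $G=I$. By naturality of the braiding with respect to the monomorphisms and epimorphisms exhibiting $F$ as a subquotient of a sum of tensor powers $\bigotimes^a$, and -- applying the exact monoidal functor $-^{(1)_q}$ to a presentation of $G\in\polyfunctor{d}_1$ -- exhibiting $G^{(1)_q}$ as a subquotient of a sum of functors $(I^{(1)_q})^{\otimes d}$, it suffices to treat $R_{\bigotimes^a,(I^{(1)_q})^{\otimes d}}$ and its mirror; the hexagon identities \eqref{(FG)H to H(FG)} and \eqref{F(GH) to (GH)F} express these as composites of copies of $R_{I,I^{(1)_q}}$, resp.\ $R_{I^{(1)_q},I}$, and a composite of switch maps is the block transposition, which is the switch map. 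To compute $R_{I,I^{(1)_q}}$: by Proposition \ref{natural embedding}, $I^{(1)_q}(n)$ is the span of the elements $e_i^\ell\in S_q^\ell(n)$; I would lift $e_i^\ell$ to $e_i^{\otimes\ell}\in\bigotimes^\ell(n)$, use naturality with respect to the quotient $\bigotimes^\ell\twoheadrightarrow S_q^\ell$, and evaluate $R_{\bigotimes^1,\bigotimes^\ell}(e_j\otimes e_i^{\otimes\ell})$ as the action of the braid carrying the first strand past the remaining $\ell$. After projecting to $S_q^\ell$, the only monomials besides $e_i^\ell\otimes e_j$ that survive are multiples of $e_i^{\ell-1}e_j\otimes e_i$, with total coefficient $(q-q^{-1})\sum_{s=0}^{\ell-1}q^{2s}$, which vanishes since $q^{2\ell}=1$ and $q^2\neq 1$; hence $R_{I,I^{(1)_q}}(e_j\otimes e_i^\ell)=e_i^\ell\otimes e_j$, the switch map. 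The mirror $R_{I^{(1)_q},I}$ follows from the same computation, or by applying $\#$ (using $I^\#=I$, $(R_{F,G})^\#=R_{G^\#,F^\#}$, and the already-established $(F^\#)^{(1)_q}=(F^{(1)_q})^\#$). Finally $(R_{F,G})^{(1)_q}=R_{F^{(1)_q},G^{(1)_q}}$ for $F,G\in\polyfunctor{}_1$: the left-hand side is the switch map because the braiding of strict polynomial functors is the switch map (at $q=1$ the operator $R_n$ is $e_i\otimes e_j\mapsto e_j\otimes e_i$), and the right-hand side is the switch map by the statement just proved (applied to $F^{(1)_q}\in\quantumfunctor{}$ and $G\in\polyfunctor{}_1$); both are morphisms $F^{(1)_q}\otimes G^{(1)_q}\to G^{(1)_q}\otimes F^{(1)_q}$ with the same components on each object, so they coincide.

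\textbf{Main obstacle.} Everything funnels into the one honest computation of $R_{\bigotimes^1,\bigotimes^\ell}$ on the vectors $e_j\otimes e_i^{\otimes\ell}$: one has to track the coefficients of the partially-moved strand and recognize the cancellation $1+q^2+\cdots+q^{2(\ell-1)}=0$. This is the single place where the hypothesis that $q$ is a primitive $\ell$-th root of unity genuinely enters, and pinning down the exact coefficients -- and checking that the identity obtained on these generators really does force the braiding to be the switch map in general -- is the delicate part.
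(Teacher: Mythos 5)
Your proposal is correct and follows essentially the same route as the paper: the easy parts (exactness, full faithfulness via surjectivity of $\frobenius$ on Schur algebras, the bialgebra property for $\otimes$, commutation with $\sigma$ for duality, idempotent images for weights) are handled identically, and the braiding statement is reduced by naturality, the hexagon axioms and subquotients of tensor powers to the single computation of $R_{\bigotimes^1,S^\ell_q}$ on the image of the embedding $\varphi\colon I^{(1)_q}\to S^\ell_q$, where the cancellation $(q-q^{-1})(1+q^2+\cdots+q^{2(\ell-1)})=0$ does the work. The only cosmetic differences are that you obtain the mirror braiding $R_{I^{(1)_q},I}$ by duality rather than by symmetry of the computation, and you package full faithfulness as a general statement about pullback along a full, bijective-on-objects functor.
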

\begin{proof}
    The exactness simply follows from the fact that if $f : F \to G$ is a natural transformation, $f^{(1)_q}(n) = f(n)$. If we add the fact that the algebra morphisms $S_q(n,m;d) \to S_1(n,m;d)$ are surjective (this is an easy consequence of the definition, see \cite[theorem 11.7.1]{parshall1991quantum}), this also shows that it is fully faithful. The compatibility with the tensor product comes from the fact the transpose of the quantum Frobenius twist is a bialgebra morphism \cite[section 7.2]{parshall1991quantum}. For the duality, we verify that
    \begin{center}
        \begin{tikzcd}
            S_q(n,m;d\ell) \arrow[r,"\frobenius"] \arrow[d,"\sigma"]
            & S_1(n,m;d) \arrow[d,"\sigma"]\\
            S_q(m,n;d\ell) \arrow[r,"\frobenius"]
            & S_1(m,n;d)
        \end{tikzcd}
    \end{center}
    commutes. The statement on weight spaces is a direct consequence of the fact that
    \[
        \frobenius(\xi_\alpha) = \left \{
        \begin{array}{cc}
            \xi_{\beta} & \mbox{if } \alpha = l\beta, \\
            0 & \mbox{otherwise}
        \end{array}
        \right . \quad \in S_1(n,n;d).
    \]
    The difficult part is to prove the compatibility with the braiding. $(R_{F,G})^{(1)_q} = R_{F^{(1)_q},G^{(1)_q}}$ is a particular case of the claim that $R_{F,G^{(1)_q}}$ and $R_{G^{(1)_q},F}$ are switch maps for any $F  \in \quantumfunctor{}$ and any $G \in \polyfunctor{}_1$ (the switch map defines a braiding on $\polyfunctor{}_1$ satisfying all the properties asked in proposition \ref{definition of the braiding}, and hence is the braiding on $\polyfunctor{}_1$). We prove that $R_{F,G^{(1)_q}}$ is a switch map. First note that by naturality of the braiding (proposition \ref{definition of the braiding} \eqref{natural transformation and braiding}), the braiding of a quotient is the quotient of the braiding and the braiding of a subobject is the restriction of the braiding. We prove the result in two steps. 
    \begin{itemize}
        \item \textbf{Step 1 :} We prove the result for $F = \bigotimes^0$ or $\bigotimes^1$ and $G = \bigotimes^0$ or $\bigotimes^1$. In the case where at least one of the two functor is $\bigotimes^0$, then this follow directly from proposition \ref{definition of the braiding} \eqref{unit of the braiding} (note that $(\bigotimes^0)^{(1)_q} = \bigotimes^0$). Now suppose $F = \bigotimes^1 \in \quantumfunctor{}$ and $G = \bigotimes^1 \in \polyfunctor{}_1$. We use the injective natural transformation $\varphi : G^{(1)_q} = S^{1(1)_q} \to S^\ell_q$ of proposition \ref{natural embedding}. $S^\ell_q$ is a quotient of $\bigotimes^\ell$, and the image of $\varphi$ is spanned by the image $e_j^\ell$ of the $e_j^{\otimes \ell}$ by this quotient map. Using proposition \ref{definition of the braiding} \eqref{F(GH) to (GH)F}, we see that
        \[
            R(e_i \otimes e_j^{\otimes \ell}) = e_i \otimes e_j^{\otimes \ell} \cdot T_1 T_2 \cdots T_\ell
        \]
        We separate the case $i<j$, $i=j$ and $i>j$.
        \begin{itemize}
            \item If $i<j$,
            \[
                R(e_i \otimes e_j^{\otimes \ell}) = e_j^{\otimes \ell} \otimes e_i 
            \]
            and hence
            \[
                R(e_i \otimes e_j^\ell) = e_j^\ell \otimes e_i .
            \]
            \item If $i=j$,
            \[
                R(e_i \otimes e_j^{\otimes \ell}) = q^\ell e_j^{\otimes \ell} \otimes e_i = e_j^{\otimes \ell} \otimes e_i
            \]
            and hence
            \[
                R(e_i \otimes e_j^\ell) = e_j^\ell \otimes e_i .
            \]
            \item If $i>j$,
            \[
                R(e_i \otimes e_j^{\otimes \ell}) = e_j^{\otimes \ell} \otimes e_i + (q-q^{-1}) \sum_{r=0}^{\ell-1} q^{\ell-r-1} e_j^{\otimes r} \otimes e_i \otimes e_j^{\otimes \ell-r}
            \] 
            and hence
            \begin{align*}
                R(e_i \otimes e_j^\ell) &
                = e_j^\ell \otimes e_i + (q-q^{-1}) \sum_{r=0}^{l-1} q^{\ell-r-1} e_j^r  e_i e_j^{\ell-r-1} \otimes e_j \\
                & = e_j^\ell \otimes e_i + (q-q^{-1}) \underbrace{(1 + q^2 + \cdots + (q^2)^{\ell-1})}_{=0} e_\ell^{\ell-1}e_i \otimes e_j \\
                & = e_j^\ell \otimes e_i
            \end{align*}
            using the relation $e_i e_j = q e_j e_i$.
        \end{itemize}
        Thus, in each case,
        \[
            R(e_i \otimes e_j^\ell) = e_j^\ell \otimes e_i.
        \]
        This show that $R_{\bigotimes^1,S^l_q}$ restricted to the image of $\varphi$ is a switch map, and hence that $R_{F,G^{(1)_q}}$ is the switch map.
        \item \textbf{Step 2 :} We extend the result to all functors. It follows from Step 1 and proposition \ref{definition of the braiding} \eqref{(FG)H to H(FG)} and \eqref{F(GH) to (GH)F} that $R_{F,G^{(1)_q}}$ is the switch map for $F = \bigotimes^d \in \quantumfunctor{}$ and $G = \bigotimes^e \in \polyfunctor{}_1$. If $F$ and $G$ are direct sum of tensor power functors, then since the restriction of the braiding on each summand of $F \otimes G$ is the switch map, $R_{F,G^{(1)_q}}$ is also the switch map. Finally, since the braiding works well with subobjects and quotients, we conclude that $R_{F,G^{(1)_q}}$ is the switch map for any $F,G$ subquotients of direct sum of tensor power functors. But every functor is of this form by proposition \ref{property of symmetric power}. This proves the proposition.
    \end{itemize}
    \end{proof}
\begin{remark} As it was observed in \cite[section 8.2]{buciumas2019quantum}, we cannot iterate this Frobenius twist. But when $\field$ is a field of positive characteristic, we have a quantum version of the iterated classical twist $F^{(r)_q} = (F^{(r-1)})^{(1)_q}$, where $^{(r-1)}$ refers to the classical Frobenius twist of strict polynomial functors \cite{friedlander1997cohomology}. More details in section \ref{sec:Extsmall}.
\end{remark}

\section{The Koszul and de Rham complexes}

\subsection{Definition and basic properties}

\begin{definition}
    For $0 \leq i \leq d$, set
    \[
        \Omega^i_{d} = S^{d-i}_q \otimes \Lambda^i_q\;,\qquad \Omega=\bigoplus_{i,d\ge 0} \Omega^i_{d}\;,
    \]
    and
    \[
        K^i_d = \Omega^{d-i}_d = S^i_q \otimes \Lambda^{d-i}_q\;,\qquad K=\bigoplus_{i,d\ge 0} K^i_{d}\;.
    \]
    The integer $d$ is called the degree and $i$ the cohomological degree. 
    The braiding of $\quantumfunctor{}$ and the algebra structure of the symmetric and exterior powers induce an algebra structure on $\Omega$ and $K$, with
    \[
        (a \otimes b) \cdot (c \otimes d) = (\mu \otimes \mu)(a \otimes R(b \otimes c) \otimes d).
    \]
    Moreover, we define two maps $\delta: \Omega \to \Omega$, $\koszul : K \to K$ raising the cohomological degree by 1, by the compositions:
    \[
        \delta : \Omega^i_{d} = S^{d-i}_q \otimes \Lambda^i_q \xrightarrow{\Delta^{(d-i-1,1)} \otimes 1} S^{d-i-1}_q \otimes S^1_q \otimes \Lambda^i_q = S^{d-i-1}_q \otimes \Lambda^1_q \otimes \Lambda^i_q \xrightarrow{1 \otimes \mu} S^{d-i-1}_q \otimes \Lambda^{i+1}_q
    \]
    and
    \[
        \koszul : K^i_{d} = S^i_q \otimes \Lambda^{d-i}_q \xrightarrow{1 \otimes \Delta^{(1, d- i - 1)} } S^i_q \otimes \Lambda^1_q \otimes \Lambda^{d-i-1}_q = S^i_q \otimes S^1_q \otimes \Lambda^{d-i-1}_q \xrightarrow{\mu \otimes 1} S^{i+1}_q \otimes \Lambda^{d-i-1}_q\;.
    \]
    Moreover, $\delta^2 = \koszul^2 = 0$. The complex $(\Omega,\delta)$ is called the quantum de Rham complex and the complex $(K,\koszul)$ is called the Koszul complex. When $q=1$, these two complexes are the classical de Rham and Koszul complex \cite[section 4]{friedlander1997cohomology}.
\end{definition}
The two complexes are special cases of the complexes in \cite{wambst1993complexes}, given by the Hecke symmetry $-qR_n$ and $q^{-1}R_n$. The following lemma is proved in \cite[lemma 3.3]{wambst1993complexes}.
\begin{proposition}\label{koszul/de Rham compatibility}
    On $\Omega^i_d = K^{d-i}_d$,
    \[
        (-q^{-1})^2 \delta \koszul + \koszul \delta = ((-q^{-1})^2)^i (d)_{q^2} \Id.
    \]
    Here, for $n \in \mathbb{N}$, $(n)_{q^2} = 1 + q^2 + \cdots + (q^2)^{n-1}$.
\end{proposition}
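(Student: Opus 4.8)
The plan is to prove the identity degree by degree, working on a fixed homogeneous component $\Omega^i_d = S^{d-i}_q \otimes \Lambda^i_q = K^{d-i}_d$, and to reduce everything to an explicit computation on weight spaces, which are one-dimensional (for $S^d_q$) or at worst one-dimensional (for $\Lambda^d_q$) by the weight-space proposition just proved. Since $\delta$ and $\koszul$ are natural transformations of quantum polynomial functors, it suffices to check the operator identity after evaluating on $V_n$ for $n$ large enough (say $n \ge d$), and since both operators are built from the coproducts $\Delta^{(*,*)}$ and products $\mu$ on $S^*_q$ and $\Lambda^*_q$, they are homogeneous for the weight grading, so one may test the identity on each weight vector $e^\beta \otimes \overline{e_J}$ separately.

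First I would unwind the composite $\koszul\delta + (-q^{-1})^2\,\delta\koszul$ applied to a basis element $a \otimes \omega \in S^{d-i}_q(n) \otimes \Lambda^i_q(n)$. For $\delta$ one splits off one generator from the symmetric factor via $\Delta^{(d-i-1,1)}$ and wedges it into $\omega$; for $\koszul$ one splits off one generator from the exterior factor via $\Delta^{(1,i-1)}$ (after reindexing $K^{d-i}_d = S^{d-i}_q \otimes \Lambda^i_q$) and multiplies it into $a$. Composing the two orders, one gets a sum over the $d-i$ "slots" of the symmetric monomial and the $i$ "slots" of the exterior monomial, together with powers of $q$ coming from the $q$-shuffle coefficients in the coproducts and from the braiding $R$ that is implicit in the algebra structure on $\Omega$ and $K$. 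The $\delta\koszul$ term produces all the "off-diagonal" pairs (remove a letter from $\Lambda$, then reinsert into $S$, and vice versa) which cancel against the corresponding terms of $\koszul\delta$ once the $q$-powers are matched, while the "diagonal" terms (remove a letter and put the same letter back) survive and, summed up, give the coefficient $((-q^{-1})^{2})^i (d)_{q^2}$. This is exactly the mechanism behind the classical identity $\delta\kappa + \kappa\delta = d\cdot\Id$ in \cite[section 4]{friedlander1997cohomology}, deformed by the Hecke parameter.

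Rather than redo this bookkeeping from scratch, I would invoke \cite[lemma 3.3]{wambst1993complexes}: as noted right before the statement, $(\Omega,\delta)$ and $(K,\koszul)$ are the instances of Wambst's Koszul-type complexes attached to the Hecke symmetries $-q R_n$ and $q^{-1} R_n$ respectively. Wambst's lemma gives a general "Euler/Cartan" homotopy formula relating the two differentials of a Hecke pair; the task is then purely to translate his normalization (which eigenvalue of the braiding he calls what, and how he indexes cohomological degree) into ours and read off that his constant specializes to $(-q^{-1})^{2i}(d)_{q^2}$ on $\Omega^i_d$. The appearance of $(-q^{-1})^2$ rather than $q^{-2}$ and of the exponent $i$ is precisely what records the two different Hecke symmetries $-qR_n$ and $q^{-1}R_n$ used for $\Lambda$ and $S$, and the factor $(d)_{q^2}$ is the $q^2$-analogue of the classical dimension count $d$.

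The main obstacle is bookkeeping, not mathematics: correctly tracking the $q$-powers produced by the shuffle coefficients $q^{\ell(\sigma)}$, $(-q^{-1})^{\ell(\sigma)}$ in $\Delta^{(*,*)}$ together with the braiding factors from \eqref{braiding and product on the left}--\eqref{product and coproduct} and the commutation rules \eqref{commutativity for symmetric power}--\eqref{commutativity for exterior power}, and checking that the off-diagonal contributions to $\koszul\delta$ and $(-q^{-1})^2\delta\koszul$ cancel with the right signs and the right powers of $q$. I expect this to go through cleanly on weight vectors $e^\beta \otimes \overline{e_J}$ once one observes that moving a letter $e_k$ past a monomial $e^\beta$ in $S^*_q$ costs $q^{\pm(\text{number of letters it crosses})}$ and similarly in $\Lambda^*_q$ up to sign; summing the diagonal terms then yields a geometric series $1 + q^2 + \cdots + (q^2)^{d-1} = (d)_{q^2}$ after the overall normalization $(-q^{-1})^{2i}$ is extracted. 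Since the cited lemma of Wambst already packages this, the write-up can be short: state the dictionary between the two setups and conclude.
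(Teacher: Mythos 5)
Your proposal ultimately rests on the same step as the paper: the paper gives no independent proof but simply observes that $(\Omega,\delta)$ and $(K,\koszul)$ are the complexes of Wambst attached to the Hecke symmetries $-qR_n$ and $q^{-1}R_n$ and cites \cite[lemma 3.3]{wambst1993complexes} for the identity, which is exactly your concluding strategy (your preliminary weight-space sketch is a reasonable fallback but is not needed). So the proposal is correct and takes essentially the same route as the paper.
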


The following proposition is not proved in \cite{wambst1993complexes}. It will be a key technical point for the proofs of proposition \ref{homology of the de Rham complex} and theorem \ref{cartier}. We call it the $q$-Leibniz formula.
\begin{proposition}\label{compatibility of the differential with product}
    For $a \in K^{i}_{d_1}$ and $b \in K^{j}_{d_2}$,
    \[
        \koszul(ab) =  q^j \koszul(a) b + (-q^{-1})^{d_1 - i} a \koszul(b) 
    \]
    For $a \in \Omega^{i}_{d_1}$ and $b \in \Omega^{j}_{d_2}$,
    \[
        \delta(ab) =  q^{d_2 - j} \delta(a) b + (-q^{-1})^i a \delta(b) 
    \]
\end{proposition}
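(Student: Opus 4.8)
The plan is to prove the formula for $\koszul$ by direct computation, unwinding the definitions of the product on $K$ and of the differential $\koszul$, and then to deduce the statement for $\delta$ by a duality/symmetry argument (replacing $q$ by $-q^{-1}$ and swapping the roles of $S_q^*$ and $\Lambda_q^*$, exactly as in the passage from the Koszul complex to the de Rham complex noted after the definitions). So I will concentrate on the Koszul case. Write $a = a' \otimes a'' \in S^i_q \otimes \Lambda^{d_1-i}_q$ and $b = b' \otimes b'' \in S^j_q \otimes \Lambda^{d_2-j}_q$. The differential $\koszul$ applies $\Delta^{(1,d-i-1)}$ to the exterior factor, then multiplies the resulting degree-$1$ piece into the symmetric factor. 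The product $ab$ is $(\mu \otimes \mu)(a' \otimes R(a'' \otimes b') \otimes b'')$, so computing $\koszul(ab)$ requires applying the coproduct $\Delta^{(1,\bullet)}$ to the exterior part of $a''b'$-type terms; here the relations \eqref{braiding and coproduct on the left}, \eqref{braiding and coproduct on the right} and especially \eqref{product and coproduct} (the "$\Delta(xy)=\Delta(x)\Delta(y)$" relation with its $(\pm q^{\pm1})$ factor) are exactly what is needed to split $\Delta^{(1,d-i-1)}$ across a product.

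The key steps, in order: (1) expand $\koszul(ab)$ using the algebra structure on $K$ and the formula \eqref{product and coproduct} for the coproduct of a product in $\Lambda^*_q$; this separates the "extracted" degree-$1$ exterior generator into one coming from $a''$ and one coming from $b''$, producing two groups of terms; (2) in the first group — where the generator comes from $a''$ — recognize $\koszul(a)\cdot b$ up to moving the extracted generator past the factors $a''_{\text{rest}}$ and $b'$, which by the braiding relations \eqref{commutativity for symmetric power}, \eqref{commutativity for exterior power} and \eqref{braiding and product on the right} contributes precisely the scalar $q^j$ (a degree-$1$ symmetric-type class crossing a degree-$j$ symmetric class picks up $q^j$ via \eqref{commutativity for symmetric power}); (3) in the second group — generator coming from $b''$ — identify $a\cdot\koszul(b)$ after moving the generator leftward past the whole of $a''$, which has exterior degree $d_1-i$, producing the sign/scalar $(-q^{-1})^{d_1-i}$ via \eqref{commutativity for exterior power}; (4) check that the residual scalars from \eqref{product and coproduct} cancel or combine correctly so that exactly $q^j\koszul(a)b + (-q^{-1})^{d_1-i}a\koszul(b)$ survives, and no cross terms remain. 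Then step (5): transport the whole computation through the symmetry $q \leftrightarrow -q^{-1}$, $S^*_q \leftrightarrow \Lambda^*_q$, $i \leftrightarrow d-i$ to obtain the $\delta$ formula; one must check that this symmetry indeed exchanges the Koszul and de Rham structures compatibly with products, which follows from the parallel statements \eqref{braiding and coproduct on the left}–\eqref{braiding and coproduct on the right} and \eqref{product and coproduct} holding in both algebras with the matching sign conventions.

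The main obstacle I anticipate is step (4): carefully tracking all the scalars $(\pm q^{\pm1})^{|a_2||b_1|}$ from \eqref{product and coproduct} together with the braiding scalars from \eqref{commutativity for symmetric power}–\eqref{commutativity for exterior power} and verifying they collapse to the clean coefficients $q^j$ and $(-q^{-1})^{d_1-i}$ without leftover terms. A secondary subtlety is bookkeeping the order in which the braiding is applied when several generators are moved past a product (the indexed notation $r_1, r_2$ in \eqref{braiding and product on the left}–\eqref{braiding and coproduct on the right}); one has to be disciplined about which crossing happens first. Once one fixes a convention — say, always extracting the degree-$1$ generator and sliding it to its final position one tensor factor at a time, invoking \eqref{braiding and product on the right} at each crossing — the computation becomes mechanical, but it is genuinely a computation rather than a slick argument. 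An alternative that might shorten the verification is to check the identity only on weight vectors $e^\alpha \otimes \overline{e_J}$ and $e^\beta \otimes \overline{e_L}$, using the explicit one-dimensionality of the weight spaces, reducing everything to the explicit commutation relations $e_je_i = qe_ie_j$ and $e_j\wedge e_i = -q^{-1}e_i\wedge e_j$; I would try this route first and fall back on the abstract Yang–Baxter-algebra identities only if the weight-space argument does not cleanly cover all cases.
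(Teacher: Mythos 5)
Your proposal takes essentially the same route as the paper's proof: a direct expansion of the differential of a product, using the Yang--Baxter-algebra identities \eqref{product and coproduct}, \eqref{braiding and product on the left}--\eqref{braiding and coproduct on the right} and \eqref{commutativity for symmetric power}--\eqref{commutativity for exterior power} to split the extracted degree-one generator into the two expected terms with coefficients $q^j$ and $(-q^{-1})^{d_1-i}$. The only difference is cosmetic: the paper writes out the computation for $\delta$ and notes that $\koszul$ is analogous, whereas you detail $\koszul$ and pass to $\delta$ by the mirrored argument.
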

\begin{proof}
    We will prove this result for $\delta$ (the proof for $\koszul$ is similar). To do it, we first introduce some notation. First, we let
    \[
        a = \sum a' \otimes a'' \quad \mbox{and} \quad b = \sum b' \otimes b''
    \]
    Here each $a'$ is an element of $S^{d_1-i}_q$, each $b'$ of $S^{d_2-j}_q$, and each $a''$ is an element of $\Lambda^i_q$ and each $b''$ of $\Lambda^j_q$. When $v \in S^*_q$, we denote
    \[
        \Delta^{(*-1,1)}(v) = \sum v_1 \otimes v_2 \in S^{*-1}_q \otimes S^1_q.
    \]
    Then
    \[
        \delta(a) = \sum (a')_1 \otimes (a')_2 a'', \quad \delta(b) = \sum (b')_1 \otimes (b')_2b''
    \]
    and
    \begin{align*}
        \delta(ab) 
        & = \delta \left ( \left (\sum a' \otimes a'' \right ) \left (\sum b' \otimes b'' \right ) \right )\\
        & = \sum \delta((a' r(b') \otimes r(a'')b'') \\
        & = \sum (a' r(b'))_1 \otimes (a' r(b'))_2 r(a'')b''.
    \end{align*}
    Now, the relation \eqref{product and coproduct} translates in our case into the relation
    \[
        \sum (a' r(b'))_1 \otimes (a' r(b'))_2
        = \left ( \sum a'r(b')_1 \otimes r(b')_2 \right )
        + q^{d_2 - j} \left ( \sum a'_1 r_2(r_1(b')) \otimes r_2(a'_2) \right )
    \]
    Hence
    \begin{equation}\label{big sum of delta product}
        \delta(ab) = \left ( \sum a'r(b')_1 \otimes r(b')_2 r(a'')b'' \right )
        + q^{d_2 - j} \left ( \sum a'_1 r_2(r_1(b')) \otimes r_2(a'_2) r_1(a'') b'' \right )
    \end{equation}
    On the one hand, since the relation \eqref{braiding and product on the right} implies in our case
    \[
        \sum r_2(r_1(b')) \otimes r_2(a'_2)r_1(a'') = \sum r(b') \otimes r(a'_2 a'')
    \]
    we have
    \[
        \sum a'_1 r_2(r_1(b')) \otimes r_2(a'_2)r_1(a'')b'' = \sum a'_1 r(b') \otimes r(a'_2 a'') b'' = \delta(a)b
    \]
    On the other hand, by the relation \eqref{braiding and coproduct on the right}, we have
    \[
        \sum a'r(b')_1 \otimes r(b')_2 r(a'') b'' = \sum a'r_1(b'_1) \otimes r_2(b'_2) r_2(r_1(a'')) b''
    \]
    and by \eqref{commutativity for exterior power},
    \[
        \sum r_2(b'_2) r_2(r_1(a'')) = (-q^{-1})^i \sum r(a'')b'_2.
    \]
    We conclude that
    \[
        \sum a'r(b')_1 \otimes r(b')_2 r(a'')b''
        = (-q^{-1})^i \sum a'r(b'_1) \otimes r(a'')b'_2 b''
        = (-q^{-1})^i a \delta(b).
    \]
    Finally, we have
    \[
        \delta(ab) = (-q^{-1})^i a \delta(b) + q^{d_2 - j} \delta(a)b.
    \]
\end{proof}

\subsection{Homology of the Koszul and the de Rham complexes}

\begin{proposition}
    The Koszul complex is acyclic.
\end{proposition}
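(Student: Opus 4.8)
The plan is to reduce the statement to a pointwise claim and then prove it by induction on $n$, using the extra variable $e_n$ to split the degree‑$d$ part of the Koszul complex of $V_n$ into a direct sum of complexes built from that of $V_{n-1}$. A sequence of quantum polynomial functors is exact if and only if it becomes exact after evaluation on every $V_n^{\otimes d}$, so it suffices to show that for each $n\ge 1$ and each $d\ge 1$ the degree‑$d$ strand $(K_d(n),\koszul)$, namely $0\to\Lambda^d_q(n)\to S^1_q(n)\otimes\Lambda^{d-1}_q(n)\to\cdots\to S^d_q(n)\to 0$, is exact (in degree $0$ the complex is the constant functor $\field$ in cohomological degree $0$, which is what "acyclic" permits). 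The base case $n=1$ is immediate: for $d\ge 1$ the complex $K_d(1)$ reduces to $0\to S^{d-1}_q(1)\otimes\Lambda^1_q(1)\xrightarrow{\koszul} S^d_q(1)\to 0$, where $\koszul$ is $\mu$ precomposed with the isomorphism $\Delta^{(1,0)}\colon\Lambda^1_q\xrightarrow{\sim}\Lambda^1_q\otimes\Lambda^0_q$; it sends $e_1^{d-1}\otimes e_1$ to $e_1^d$ and is therefore an isomorphism, while all longer strands vanish.

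For the inductive step, write $V_n=V_{n-1}\oplus\field e_n$. The resulting decompositions $S^*_q(n)=\bigoplus_{k\ge 0}S^*_q(n-1)\,e_n^{k}$ and $\Lambda^*_q(n)=\Lambda^*_q(n-1)\oplus\Lambda^*_q(n-1)\wedge e_n$ assign to each basis vector $e^\alpha\otimes\overline{e_J}$ an $e_n$‑weight $w=\alpha_n+[\,n\in J\,]$. Using the shuffle formula for $\Delta^{(1,d-i-1)}$ — or, more conceptually, the $q$‑Leibniz formula (Proposition \ref{compatibility of the differential with product}) applied to $e_n$ — one checks that $\koszul$ preserves this $e_n$‑weight, so $K_d(n)=\bigoplus_{w=0}^{d}C_w$ splits as complexes. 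The summand $C_0$ is exactly $K_d(n-1)$, hence exact by the inductive hypothesis since $d\ge 1$. For $w\ge 1$, splitting further according to whether $n\in J$ yields a short exact sequence of complexes $0\to C_w''\to C_w\to C_w'\to 0$: the part with $n\notin J$ is $\koszul$‑stable (because $\koszul$ can only move $e_n$ out of the exterior factor and into the symmetric factor), so it is the subcomplex $C_w''$, and $C_w'$ is the quotient given by the part with $n\in J$. Both $C_w''$ and $C_w'$ are isomorphic, up to a cohomological shift and a rescaling of the differential by a power of $q$ coming from $q$‑commuting $e_n$ past the lower variables, to $K_{d-w}(n-1)$; these are exact by induction when $d-w\ge 1$, and when $d-w=0$ the relevant strand is again the two‑term isomorphism complex of the base case. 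Hence each $C_w$, and therefore $K_d(n)$, is exact, completing the induction.

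I would add a remark recording a quicker argument valid for degrees $d$ with $\ell\nmid d$: since $\ell$ is odd, $q^{2}$ is a primitive $\ell$th root of unity, so $(d)_{q^{2}}\ne 0$, and Proposition \ref{koszul/de Rham compatibility} shows that the operator equal to $q^{2(d-j)}(d)_{q^{2}}^{-1}\,\delta$ on $K^{j}_d$ is a contracting homotopy for $\koszul$ in degree $d$; this fails precisely when $\ell\mid d$, which is exactly where the de Rham complex ceases to be exact, so the inductive argument is genuinely needed for those degrees. The main obstacle is entirely in the inductive step: verifying that $\koszul$ respects the $e_n$‑weight and is block‑triangular on each weight space, and pinning down the exact $q$‑powers and signs in the identifications with $K_{d-w}(n-1)$. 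Conceptually this is transparent — $(K_d(n),\koszul)$ is the degree‑$d$ part of the Koszul complex of the normal, non‑zero‑divisor element $e_n$ over the quantum polynomial algebra $S^*_q(n-1)$, i.e.\ the mapping cone of multiplication by $e_n$, and alternatively one may simply invoke the standard fact that the Koszul complex of a normalizing regular sequence $(e_1,\dots,e_n)$ in the Noetherian domain $S^*_q(n)$ (an iterated Ore extension of $\field$) resolves $\field$ — but in either case one still has to check that the shuffle‑coproduct differential of the Definition coincides with that Koszul differential, and that coefficient bookkeeping is where the real work lies.
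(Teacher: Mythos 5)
Your argument is correct, but it is genuinely different from the paper's proof, which is a one-line citation: the acyclicity is deduced from the fact that $S_q(n)$ is a Koszul algebra with Koszul dual $\Lambda_q(n)$ (via \cite[exemple 3.4.1 and exercice 3.4.19]{witherspoon2019hochschild}), evaluated at each $V_n$. You instead give a self-contained induction on the number of variables: evaluate at $V_n$, split the degree-$d$ strand by the $e_n$-weight, and exhibit each weight piece $C_w$ (for $w\ge 1$) as a mapping cone of (a scalar multiple of) right multiplication by $e_n\otimes 1$ on a shifted copy of $K_{d-w}(n-1)$. This works, and the coefficient bookkeeping you worry about is in fact already packaged in the paper's own $q$-Leibniz formula (proposition \ref{compatibility of the differential with product}): writing elements of the sub- and quotient pieces as $a\cdot(e_n^{w}\otimes 1)$ and $a\cdot(e_n^{w-1}\otimes e_n)$ with $a\in K_{d-w}(n-1)$, the formula gives $\koszul(a(e_n^{w}\otimes 1))=q^{w}\koszul(a)(e_n^{w}\otimes 1)$ and $\koszul(a(e_n^{w-1}\otimes e_n))=q^{w-1}\koszul(a)(e_n^{w-1}\otimes e_n)+(-q^{-1})^{d_1-i}a(e_n^{w}\otimes 1)$, which is exactly your block-triangular structure with uniform nonzero rescalings; also note that $\koszul$ preserving the $e_n$-weight needs no computation at all, since $\koszul$ is a natural transformation and hence commutes with the weight idempotents $\xi_\alpha$. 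Two small points of care: when $d-w=0$ the sub- and quotient complexes are each one-dimensional and \emph{not} exact, so you must (as you indicate) treat $C_d$ directly as a two-term isomorphism complex rather than via exactness of sub and quotient; and your homotopy remark $h_j=q^{2(d-j)}(d)_{q^2}^{-1}\delta$ on $K^j_d$ does check out against proposition \ref{koszul/de Rham compatibility}, but only covers $\ell\nmid d$ (or generic $q$), so the induction is indeed needed in general. The trade-off: the paper's route is shorter but outsources the content to Koszul duality for quantum symmetric/exterior algebras, while yours is elementary, stays inside the formalism already developed in the paper, and makes visible exactly why acyclicity holds for all $d$ while the de Rham complex fails to be exact in degrees divisible by $\ell$.
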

\begin{proof}
    It is a direct consequence of the fact that $S_q(n)$ is a Koszul algebra for any $n$, and of the fact that its Koszul dual is $\Lambda_q(n)$ \cite[exemple 3.4.1 and exercice 3.4.19]{witherspoon2019hochschild}. 
\end{proof}

\begin{proposition}\label{homology of the de Rham complex}
    The de Rham complex is acyclic in polynomial degree not divisible by $\ell$. In polynomial degree divisible by $\ell$, there is an isomorphism of vector spaces
    \[
        H^i(\Omega_{d\ell})(n) \cong \bigoplus_{\alpha \in \Omega(d,n)} \Omega^i_{d\ell}(n)_{\ell \alpha}
    \]
    for all $i \geq 0$.
\end{proposition}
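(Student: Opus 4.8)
The plan is to exploit the $q$-Leibniz formula (Proposition \ref{compatibility of the differential with product}) together with the Koszul/de Rham compatibility (Proposition \ref{koszul/de Rham compatibility}) to set up a homotopy argument, much as in the classical case, but being careful about the $q$-integers $(d)_{q^2}$ which vanish exactly when $\ell \mid d$. First I would fix $n$ and pass to the weight-space decomposition: since $\delta$ preserves each weight $\gamma \in \Omega(d,n)$, the complex $\Omega_d(n)$ splits as a direct sum over weights, and $\Omega^i_d(n)_\gamma$ is spanned by monomials $e^\beta \otimes \overline{e_J}$ with $\beta + \alpha_J = \gamma$ and $|J| = i$. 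So it suffices to compute the homology weight space by weight space.

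Next, the key observation is that Proposition \ref{koszul/de Rham compatibility} gives, on $\Omega^i_d = K^{d-i}_d$, the identity $q^{-2}\delta\kappa + \kappa\delta = q^{-2i}(d)_{q^2}\,\Id$. When $(d)_{q^2} \neq 0$ — i.e. when $\ell \nmid d$, using that $q$ is a primitive $\ell$-th root of unity so $(d)_{q^2} = 0 \iff \ell \mid d$ (here $\ell$ odd makes $q^2$ also a primitive $\ell$-th root) — this exhibits a contracting homotopy (rescale $\kappa$ by $q^{2i}/(d)_{q^2}$ in the appropriate degree), hence $\Omega_d(n)$ is acyclic in every polynomial degree $d$ not divisible by $\ell$. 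This disposes of the first assertion. For the second assertion, when $d = d'\ell$, the homotopy relation degenerates, and I would instead argue that the subspace spanned by the "$\ell$-divisible" monomials, i.e. $\bigoplus_\alpha \Omega^i_{d\ell}(n)_{\ell\alpha}$, is precisely the part where $\delta$ acts as zero and carries all the homology: on a weight $\gamma$ that is not $\ell$-divisible one still has a contracting homotopy from a local version of Proposition \ref{koszul/de Rham compatibility} restricted to that weight (the relevant scalar is again a partial $q$-integer that is nonzero), while on an $\ell$-divisible weight $\gamma = \ell\alpha$ the exterior factor forces $J$ to have $|J| = i$ and $\beta = \ell\alpha - \alpha_J$, and one checks via the explicit formula for $\Delta^{(d-i-1,1)}$ on $S_q^*$ and the relations $e_j e_i = q e_i e_j$ that $\delta$ vanishes on each such monomial, because the coefficient collecting the ways of extracting a degree-one factor is a full $q$-integer $(\ell)_{q^2}$ or a multiple of it, hence zero. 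Thus on $\ell$-divisible weights $\delta = 0$ and $H^i = \Omega^i_{d\ell}(n)_{\ell\alpha}$, while off them $H^i = 0$; summing over weights gives the claimed isomorphism.

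The main obstacle I expect is the bookkeeping in the $\ell$-divisible case: one must show simultaneously that $\delta$ genuinely vanishes on the $\ell$-divisible monomials (not merely that the homotopy breaks down there) and that the complement — spanned by monomials whose weight $\gamma$ has some coordinate $\gamma_k \not\equiv 0 \pmod \ell$ — is acyclic. For the latter, the clean route is to build an explicit weight-graded homotopy: decompose $\delta$ and $\kappa$ along the coordinate $k$ where $\gamma_k \not\equiv 0$, and use that the Koszul/de Rham relation, read in that single coordinate, produces the nonzero scalar $(\gamma_k)_{q^2}$ or a unit times it; alternatively one can induct on $n$, peeling off the last variable $e_n$ and writing $\Omega_q(n) \cong \Omega_q(n-1) \otimes \Omega_q(1)$ as complexes (using the $q$-Leibniz rule to see the tensor differential), then invoking a $q$-Künneth formula together with the rank-one computation $H^*(\Omega_q(1))$, which is elementary. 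I would present the inductive/Künneth version, since it isolates the genuinely new input — the rank-one de Rham homology, where $S_q^*(1) = \field[e]$ and $\Lambda_q^*(1) = \field[e]/(e^2)$ and $\delta$ is the obvious $q$-twisted differential whose homology is spanned exactly by the $e^{\ell m}$'s — and then the general statement follows formally.
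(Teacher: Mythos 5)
Your outline is essentially correct and close in spirit to the paper's argument: both split $\Omega_{d\ell}(n)$ into weight subcomplexes, both show $\delta=0$ on $\ell$-divisible weights, and both treat the remaining weights by peeling off one variable; but the mechanisms differ. For the vanishing of $\delta$ on a weight $\ell\alpha$ you compute coproduct coefficients directly (extracting $e_k$ from $e^{\ell\alpha-\alpha_J}$ with $k\notin J$ carries a $q$-power times $(\ell\alpha_k)_{q^2}=0$, and for $k\in J$ the wedge dies), which works; the paper instead only checks the one-variable degree-$\ell$ case by hand and propagates it to all $\ell$-divisible weights via the $q$-Leibniz formula and surjectivity of multiplication maps. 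For acyclicity on the non-$\ell$-divisible weights the paper avoids any K\"unneth statement: using multiplication by powers of $e_k$ (again via Leibniz) it builds a short exact sequence of complexes $0\to\Omega_{\alpha'}[1]\to\Omega_\alpha\to\Omega_{\alpha'}\to 0$, chooses $k$ with $\ell\nmid\alpha_k$, and applies the first part of the proposition to $\Omega_{\alpha'}$, whose total degree $d\ell-\alpha_k$ is not divisible by $\ell$. Two caveats on your version: your fallback of a ``local version of proposition \ref{koszul/de Rham compatibility} restricted to a weight'' does not exist as stated, since the scalar $(d)_{q^2}$ there depends only on the total degree, so that alternative should be dropped; and the ``$q$-K\"unneth formula'' is not off-the-shelf, because the Leibniz rule gives a twisted tensor differential with coefficients $q^{d_2-j}$ and $(-q^{-1})^i$, so you must first fix a weight and rescale the bigraded pieces by powers of $q$ to reach the standard tensor differential before quoting K\"unneth. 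With these points made explicit your plan does yield the proposition; the paper's exact-sequence formulation buys precisely the avoidance of that rescaling step.
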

\begin{proof}
    The acyclicity in polynomial degree not divisible by $l$ is proved in the exact same way as in \cite[proposition 4.1]{wambst1993complexes}. By proposition \ref{koszul/de Rham compatibility},
    \[
        h_i : \Omega^i_d \to \Omega^{i-1}_d  , \quad h_i = (-q^{-1})^{2(i-1)} \frac{1}{(d)_{q^2}} \koszul
    \]
    define a homotopy between the identity and the zero map $\Omega_d \to \Omega_d$, since $(d)_{q^2} \neq 0$.
    To compute the homology in polynomial degree $d\ell$, we decompose the de Rham complex $\Omega_{d\ell}$ as a direct sum of complexes
    \[
        \Omega_\alpha : 0 \to S^{d\ell}_q(n)_\alpha \to (S^{d\ell-1}_q \otimes \Lambda^1_q)(n)_{\alpha} \to \cdots \to \Lambda^{d\ell}_q(n)_{\alpha} \to 0.
    \]
    We make the following claim.
    \begin{claim}
        Let $\alpha$ be a composition in $n$ parts, and let $k$ be such that $\alpha_k \neq 0$. Let $\alpha'$ be the composition in $n$ parts given by
        \[
            (\alpha')_i = \left \{
            \begin{array}{cc}
                 \alpha_i & \mbox{if } i \neq k, \\
                 0 & \mbox{if } i = k.
            \end{array}
            \right .
        \]
        Then there is a short exact sequence of complexes :
        \[
            0 \to \Omega_{\alpha'}[1] \to \Omega_{\alpha} \to \Omega_{\alpha'} \to 0
        \]
        where $C[1]$ is the complex defined by $C[1]^i = C^{i-1}$.
    \end{claim}
    We prove claim 1 after the end of this proof. Before this, we use this claim to compute the homology of $\Omega_{\alpha}$ for the $\alpha \in \Omega(d\ell,n)$ which are not divisible by $\ell$. For them, choose $k$ such that $\alpha_k$ is not divisible by $\ell$. Then the corresponding $\alpha'$ in the above claim is a composition of $d\ell - \alpha_k$, which is not divisible by $\ell$. Hence, $\Omega_{\alpha'}$ is acyclic, and the short exact sequence prove that it is also the case for $\Omega_{\alpha}$. Now, we compute the homology of $\Omega_{\alpha}$ for the $\alpha \in \Omega(d\ell,n)$ which are divisible by $\ell$. This follow from claim 2.
    \begin{claim}
        Let $\alpha$ be a composition in $n$ part divisible by $\ell$. Then all differential in $\Omega_{\alpha}$ are zero.
    \end{claim}
    \end{proof}
    
\begin{proof}[Proof of claim 1]
    We have 
    \begin{align*}
        \Omega^i_\alpha & = \bigoplus_{J \subseteq Supp(\alpha), \ |J| = i} S^{d-i}_q(n)_{\alpha - \alpha_J} \otimes \Lambda^{i}_q(n)_{\alpha_J} \\
        & = \left ( \bigoplus_{J \subseteq Supp(\alpha'), \ |J| = i} S^{d-i}_q(n)_{\alpha - \alpha_J} \otimes \Lambda^{i}_q(n)_{\alpha_J} \right ) \\
        & \qquad \qquad \oplus \left ( \bigoplus_{J \subseteq Supp(\alpha'), \ |J| = i-1} S^{d-i}_q(n)_{\alpha - \alpha_J - 1_k} \otimes \Lambda^{i}_q(n)_{\alpha_J + 1_k} \right)
    \end{align*}
    where $Supp(\alpha)$ is the set of $i$ such that $\alpha_i \neq 0$, and $1_k$ is the composition with every part equals zero, except the $k$-th part which is equal to $1$. By proposition \ref{compatibility of the differential with product}, the graded morphism $\iota$ given by the multiplication by $q^{\alpha_k-1}e_k^{\alpha_k-1} \otimes e_k$ is a morphism of complexes. The image of $\iota$ is the second part of the above direct sum. We show that $\iota$ is injective. First note that
    \[
        q^{\alpha_k-1} (e_{k}^{\alpha_k-1} \otimes e_k) = (1 \otimes e_k) (e_k \otimes 1)^{\alpha_k-1}\;.
    \]
    Since $a \in \Omega_{\alpha'}$ and $(\alpha')_k = 0$, we have $a(1 \otimes e_k) \neq 0$. Moreover, a direct computation shows that if $a \in \Omega$ is non-zero, then $a(e_k \otimes 1) \neq 0$. These two properties show the injectivity. Now, we need to show that the cokernel of $\iota$ is isomorphic to $\Omega_{\alpha'}$. To do that, we let
    \[
        \psi : \Omega_{\alpha'} \to \Omega_{\alpha} / \Image(\iota) \mbox{ be defined by } \psi(a) = q^{i\alpha_k} [a(e_k^{\alpha_k} \otimes 1)] \mbox{ if } a \in \Omega^i_{\alpha'}
    \]
    The graded map $\psi$ is a morphism of complexes, since
    \[
        \delta(a(e_k^{\alpha_k} \otimes 1)) = q^{\alpha_k} \delta(a) (e_k^{\alpha_k} \otimes 1) + (-q^{-1})^{i} (\alpha_k)_{q^2} \underbrace{a(e_k^{\alpha_k-1} \otimes e_k)}_{\in \Image(\iota)}
    \]
    and hence
    \[
        \overline{\delta}(\psi(a)) = q^{\alpha_k} q^{i\alpha_k} [d(a)(e_k^{\alpha_k} \otimes 1)] =  \psi(\delta(a))
    \]
    Moreover, $\psi$ is surjective, since $\Omega_{\alpha} / \Image(\iota)$ is spanned by the $[e^{\alpha-\alpha_J} \otimes e_J]$ with $J \subseteq Supp(\alpha')$ such that $|J|=i$, and 
    \[
        [(e^{\alpha'-\alpha_J} \otimes e_J)(e_k^{\alpha_k} \otimes 1)] = [e^{\alpha-\alpha_J} \otimes e_J]\;.
    \]
    Finally, a dimension argument (or a proof by direct computation) proves that $\psi$ is also injective, and the claim follows.
\end{proof}

    \begin{proof}[Proof of claim 2]
        \begin{itemize}
        \item The complex $\Omega_{(\ell)}$ is concentrated in degrees $0$ and $1$, hence there is only one differential to compute. This can be done in a straightforward manner, and we find that this differential is zero.
        \item Next, we use \ref{compatibility of the differential with product} and the fact that the product $\Omega_{(\ell)} \otimes \Omega_{(d\ell)} \to \Omega_{((d+1)\ell)}$ is surjective to prove by induction that it is the case for all $\Omega_{(d\ell)}$.
        \item Finally, we use again \ref{compatibility of the differential with product} and the fact that the product
        \[
            \Omega_{(\alpha_1)} \otimes \Omega_{(\alpha_2)} \otimes \cdots \otimes \Omega_{(\alpha_n)} \to \Omega_{\alpha}
        \]
        (where the $i$-th part of the tensor product correspond to the $e_i$) is surjective to prove that it the claim also hold for $\Omega_{\alpha}$.
    \end{itemize}
    \end{proof}
\begin{theorem}\label{cartier}
    There is an isomorphism of graded algebras
    \[
        \cartier(n) : \Omega^{(1)_q}(n) \to H^{\bullet}(\Omega(n))
    \]
    given on generators by
    \[
        e_i \otimes 1 \mapsto [e_i^\ell \otimes 1] \in H^0(\Omega_\ell(n)), \quad 1 \otimes e_i \mapsto [e_i^{\ell-1} \otimes e_i] \in H^1(\Omega_\ell(n))
    \]
    Moreover, this isomorphism defines functor isomorphisms
    \[
        \cartier^i_d : \Omega_d^{i(1)_q} \to H^i(\Omega_{d\ell})
    \]
\end{theorem}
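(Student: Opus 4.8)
The plan is to construct the map $\cartier(n)$ explicitly, show it is a well-defined algebra homomorphism, and then check it is an isomorphism degree by degree. First I would recall that $\Omega^{(1)_q}(n)$ is, by definition of the quantum Frobenius twist and Proposition \ref{property of quantum Frobenius twist}, the classical de Rham complex $\Omega(n)$ equipped with its usual (commutative, since the braiding against a twisted functor is the switch) algebra structure, concentrated in polynomial degrees divisible by $\ell$. So I must produce an algebra map out of the free graded-commutative algebra on the classes $e_i\otimes 1$ (degree $(1,0)$, i.e.\ polynomial degree $\ell$ after twisting, cohomological degree $0$) and $1\otimes e_i$ (cohomological degree $1$). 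The target $H^\bullet(\Omega(n))$ is a graded algebra because $\delta$ is a derivation by the $q$-Leibniz formula (Proposition \ref{compatibility of the differential with product}), so cocycles form a subalgebra and coboundaries an ideal. I would check that the proposed images $[e_i^\ell\otimes 1]$ and $[e_i^{\ell-1}\otimes e_i]$ are indeed cocycles: the first lies in $\Omega^0$ where $\delta$ is the "exterior derivative", and a direct computation (the same vanishing $1+q^2+\cdots+(q^2)^{\ell-1}=0$ already used in the proof of Proposition \ref{property of quantum Frobenius twist}) gives $\delta(e_i^\ell\otimes 1)=(\ell)_{q^2}\,e_i^{\ell-1}\otimes e_i=0$; the second lies in $\Omega_\ell^1=\Lambda^\ell$-part which is the top of the length-$2$ complex $\Omega_{(\ell)}$, and Claim 2 in the proof of Proposition \ref{homology of the de Rham complex} says the differential there is zero. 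Then I would verify the defining relations of $\Omega^{(1)_q}(n)$ are respected: $e_i\otimes 1$ commute among themselves (ordinary symmetric algebra after twisting), $1\otimes e_i$ satisfy the exterior relations, and the mixed relation $(e_i\otimes 1)(1\otimes e_j)=\pm(1\otimes e_j)(e_i\otimes 1)$ — these follow from the corresponding identities in $S^*_q$ and $\Lambda^*_q$ for the $\ell$-th powers, using that $q$ is an $\ell$-th root of unity so $q^{\ell\cdot(\text{anything})}=1$. This makes $\cartier(n)$ a well-defined algebra homomorphism, and naturality in $n$ is immediate since the generators are sent to natural elements; by restricting to homogeneous components one gets the functor maps $\cartier^i_d$.

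The substance of the theorem is that $\cartier(n)$ is bijective, and here I would argue by comparing graded dimensions in each bidegree $(d,i)$ and then showing injectivity (or surjectivity) separately. By Proposition \ref{homology of the de Rham complex}, in polynomial degree not divisible by $\ell$ both sides vanish, and in degree $d\ell$ one has
\[
\dim H^i(\Omega_{d\ell})(n)=\sum_{\alpha\in\Omega(d,n)}\dim\Omega^i_{d\ell}(n)_{\ell\alpha}.
\]
On the other hand, by the weight-space statement in Proposition \ref{property of quantum Frobenius twist}, $\Omega_d^{i(1)_q}(n)_{\ell\alpha}=\Omega_d^i(n)_\alpha$ classically, and summing over $\alpha$ gives exactly $\dim\Omega_d^{i(1)_q}(n)=\sum_\alpha\dim\Omega^i_{d,\text{classical}}(n)_\alpha$. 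One then checks these two counts agree: the classical $\Omega^i_d(n)_\alpha=S^{d-i}(n)_{?}\otimes\Lambda^i(n)_?$ summed over decompositions of $\alpha$ matches $\Omega^i_{d\ell}(n)_{\ell\alpha}$ via $\ell$-th powers, because a weight vector of $S^{d-i}_q$ of weight $\ell\beta$ is a scalar multiple of $e^{\ell\beta}=(e^\beta)^{\ell\cdot}$ and similarly (using the root-of-unity condition) for the quantum exterior part — the crucial point being that $\Lambda^i_q(n)_{\ell\gamma}\neq 0$ forces each part of $\ell\gamma$ to be $0$ or $1$, hence $\gamma$ has parts $0$ or $1$, matching classical $\Lambda^i(n)_\gamma$. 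With equal finite dimensions in every bidegree, it suffices to prove surjectivity (equivalently injectivity) of $\cartier(n)$.

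For surjectivity I would use the multiplicativity established in the first paragraph together with the description of $H^\bullet(\Omega)$ coming from the proof of Proposition \ref{homology of the de Rham complex}: there we saw that $\Omega_{\ell\alpha}$ has zero differential, so $H^i(\Omega_{d\ell})(n)$ is spanned by the classes of $e^{\ell\alpha-\ell\alpha_J}\otimes \overline{e_J}$ with $J\subseteq\mathrm{Supp}(\alpha)$, $|J|=i$ (after rescaling so that all weights are of the form $\ell\cdot(\text{composition})$; note a weight vector of $\Lambda^i_q$ of weight $\ell\alpha_J$ exists only when $\ell=1$, so in fact one must argue at the level of the homology rather than naive weight vectors — this is the delicate spot). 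Concretely, the product $\cartier(n)\big((e_{j_1}^{\beta_{j_1}}\cdots)\otimes 1\big)\cdot\cartier(n)(1\otimes e_{j_1})\cdots\cartier(n)(1\otimes e_{j_i})$ equals, up to an explicit invertible scalar, the class $[e^{\ell\beta-\ell\alpha_J}\otimes\overline{e_J}]$ computed in $H^\bullet$; this uses the algebra structure on $\Omega$ and the fact that in the relevant weight space the differentials vanish, so products of cocycles compute as expected. Running over all $\beta$ and $J$ exhausts a spanning set of $H^i(\Omega_{d\ell})(n)$, giving surjectivity, hence bijectivity, of $\cartier(n)$; and since everything was natural in $n$ this gives the functor isomorphisms $\cartier^i_d$. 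The main obstacle I anticipate is exactly this last bookkeeping: matching the algebra generators' products against the explicit homology classes with the correct powers of $q$ and of $\ell$-th powers, and handling the fact that there is no honest weight vector in $\Lambda^i_q$ of weight $\ell\alpha_J$ when $\ell>1$ — so the identification of the spanning set of $H^\bullet$ has to be read off from Claim 1 and Claim 2 in the previous proof rather than from weight-space dimensions alone.
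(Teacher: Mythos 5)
Your pointwise argument (for each fixed $n$) follows essentially the paper's route: write down the presentations of $\Omega^{(1)_q}(n)$ and of $\Omega(n)=S_q(V_n)\otimes\Lambda_q(V_n)$, check the relations on the images $(e_i\otimes 1)^\ell$ and $(e_i\otimes 1)^{\ell-1}(1\otimes e_i)$ using $(\ell)_{q^2}=0$, observe the images are cycles because they sit in weight spaces of the form $\ell\cdot 1_i$ (Proposition \ref{homology of the de Rham complex}), and then compare with the monomial description of the homology. One correction to your bookkeeping: the ``delicate spot'' you flag is an artifact of a miswritten weight decomposition. One has $\Omega^i_{d\ell}(n)_{\ell\alpha}=\bigoplus_{J\subseteq \mathrm{Supp}(\alpha),\,|J|=i} S^{d\ell-i}_q(n)_{\ell\alpha-\alpha_J}\otimes\Lambda^i_q(n)_{\alpha_J}$, i.e.\ the exterior factor carries weight $\alpha_J$ (parts $0$ or $1$), not $\ell\alpha_J$, and the basis is $e^{\ell\alpha-\alpha_J}\otimes e_J$. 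So no weight vector of $\Lambda^i_q$ in weight $\ell\alpha_J$ is needed; the products of your generator images hit exactly these basis vectors, and the map sends a basis to a basis, which gives bijectivity at once and makes your separate dimension count and surjectivity discussion unnecessary.

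The genuine gap is the final sentence ``naturality in $n$ is immediate since the generators are sent to natural elements.'' The morphisms of $\source{d\ell}$ are arbitrary $\hecke{d\ell}$-linear maps $V_n^{\otimes d\ell}\to V_m^{\otimes d\ell}$; they do not preserve the generating set $\{e_i\}$, and the assignment $e_i\mapsto e_i^\ell$ is not linear in any naive sense, so compatibility with these morphisms is not formal. In fact, the naturality of $e_i\otimes 1\mapsto [e_i^\ell\otimes 1]$ is exactly the content of Proposition \ref{natural embedding}, which rests on the Parshall--Wang quantum Frobenius being (dual to) a bialgebra morphism, and the naturality of $1\otimes e_i\mapsto [e_i^{\ell-1}\otimes e_i]$ requires a further argument: the paper reduces to $d=1$ via the surjective multiplication maps, identifies $\cartier^0_1$ with $\varphi$ through the natural inclusion $H^0(\Omega_\ell)\hookrightarrow S^\ell_q$, and treats $\cartier^1_1$ by means of the natural map $\overline{\koszul}:H^1(\Omega_\ell)\to H^0(\Omega_\ell)$ induced by the Koszul differential, which kills boundaries precisely because $(\ell)_{q^2}=0$. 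Without an argument of this kind, the second assertion of the theorem, that the $\cartier^i_d:\Omega^{i(1)_q}_d\to H^i(\Omega_{d\ell})$ are isomorphisms of functors, remains unproved in your proposal; as written you only obtain an isomorphism of graded algebras for each individual $n$.
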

Our notation $c(n)$ is inspired by the fact that $c(n)$ is a quantum version of the classical Cartier isomorphism \cite{cartier1957nouvelle}.
\begin{proof}
    On the one hand, the algebra $\Omega^{(1)_q}(n)$ is equal to the classical algebra $S(V_n) \otimes \Lambda(V_n)$, hence is generated by $e_i \otimes 1$, $1 \otimes e_i$ for $i=1,...,n$, with relations
    \[
        \begin{array}{rcl}
             (e_i \otimes 1)(e_j \otimes 1) & = & (e_j \otimes 1)(e_i \otimes 1), \\
             (e_i \otimes 1)(1 \otimes e_j) & = & (1 \otimes e_j)(e_i \otimes 1), \\
             (1 \otimes e_i)(1 \otimes e_j) & = & -(1 \otimes e_j)(1 \otimes e_i), \\
             (1 \otimes e_i)(1 \otimes e_i) & = & 0.
        \end{array}
    \]
    On the other hand, the algebra $\Omega(n)$ is equal to the quantum algebra $S_q(V_n) \otimes \Lambda_q(V_n)$. Hence, using the relations in the quantum symmetric and quantum exterior algebras, and part 1 and 2 of proposition \ref{definition of the braiding}, we see that $\Omega(n)$ is generated by $e_i \otimes 1$, $1 \otimes e_i$ for $i=1,...,n$, with relations
    \[
        \begin{array}{rcll}
            (e_i \otimes 1)(e_j \otimes 1) & = & q(e_j \otimes 1)(e_i \otimes 1) & \mbox{if } i>j, \\
            (1 \otimes e_j)(e_i \otimes 1) & = & (e_i \otimes 1)(1 \otimes e_j) & \mbox{if } i>j, \\ 
            (1 \otimes e_i)(e_i \otimes 1) & = & q(e_i \otimes 1)(1 \otimes e_i), & \\ 
            (1 \otimes e_j)(e_i \otimes 1) & = & (q-q^{-1})(e_j \otimes 1)(1 \otimes e_i) + (e_i \otimes 1)(1 \otimes e_j) & \mbox{if } i<j, \\ 
            (1 \otimes e_i)(1 \otimes e_j) & = & (-q^{-1})(1 \otimes e_j)(1 \otimes e_i) & \mbox{if } i>j, \\
            (1 \otimes e_i)(1 \otimes e_i) & = & 0. &
        \end{array}
    \]
    Furthermore, we have
    \[
        \begin{array}{rcl}
             (e_i^\ell \otimes 1) & = & (e_i \otimes 1)^\ell, \\
             (e_i^{\ell-1} \otimes e_i) & = & (e_i \otimes 1)^{\ell-1} (1 \otimes e_i).
        \end{array}
    \]
    Using the relations in $\Omega(n)$ and this last remark, we prove by direct computation that
    \[
        \begin{array}{rcl}
            (e_i^\ell \otimes 1)(e_j^\ell \otimes 1) & = & (e_j^\ell \otimes 1)(e_i^\ell \otimes 1), \\
            (e_j^{\ell-1} \otimes e_j)(e_i^\ell \otimes 1) & = & (e_i^\ell \otimes 1)(e_j^{\ell-1} \otimes e_j), \\
            (e_i^{\ell-1} \otimes e_i)(e_j^{\ell-1} \otimes e_j) & = & - (e_j^{\ell-1} \otimes e_j)(e_i^{\ell-1} \otimes e_i), \\
            (e_i^{\ell-1} \otimes e_i)(e_i^{\ell-1} \otimes e_i) & = & 0.
        \end{array}
    \]
    Let $1_i$ be the composition given by
    \[
        1_i = (\underbrace{0,...,0}_{i-1},1,\underbrace{0,...,0}_{l-i}).
    \]
    Since $e_i^\ell \otimes 1 \in \Omega^0_\ell(n)_{l \cdot 1_i}$ and $e_i^{\ell-1} \otimes e_i \in \Omega^1_\ell(n)_{l \cdot 1_i}$, they are cycles by proposition \ref{homology of the de Rham complex}. 
    Thus, $\cartier(n)$ is a well-defined morphism of algebras. To prove that it is an isomorphism, recall that for $\alpha \in \Omega(d,n)$,
    \[
        (\Omega^i_{d\ell})(n)_{\ell \alpha} = \bigoplus_{J \subseteq Supp(\alpha)} S^{d\ell-i}_q(n)_{\ell\alpha - \alpha_J} \otimes \Lambda^i_q(n)_{\alpha_J}
    \]
    and hence has for basis the family of vector $e^{\ell\alpha - \alpha_J} \otimes e_J$, for $J \subseteq Supp(\alpha)$. Since
    \[
        e^{\ell\alpha - \alpha_J} \otimes e_J = \prod_{i=1}^n \left \{ 
        \begin{array}{cc}
             (e_i^\ell \otimes 1)^{\alpha_i} & \mbox{if } i \not \in J \\
             (e_i^\ell \otimes 1)^{\alpha_i - 1} (e_i^{\ell-1} \otimes e_i) & \mbox{if } i \in J
        \end{array}
        \right .
    \]
    we conclude that the map sends the basis $e^{\alpha - \alpha_J} \otimes e_J$ to the basis $e^{\ell\alpha - \alpha_J} \otimes e_J$, and hence is an isomorphism. To prove the naturality, since we have (from what we have proven above), the commutative diagram (in each $n \geq 1$)
    \begin{center}
        \begin{tikzcd}
            (\Omega_1^{0 (1)_q})^{\otimes d-i} \otimes (\Omega_1^{1 (1)_q})^{\otimes i}(n)
            \arrow[r, twoheadrightarrow, "mult"] \arrow[d,"\cong"] 
            & \Omega^{i (1)_q}_d(n) \arrow[d,"\cong"] \\
            (H^0(\Omega_\ell))^{\otimes d-i} \otimes (H^1(\Omega_\ell))^{\otimes i}(n)
            \arrow[r, twoheadrightarrow, "mult"]
            & H^i(\Omega_{d\ell})(n)
        \end{tikzcd}
    \end{center}
    where the vertical arrows are given by the Cartier isomorphism, and the horizontal arrows are surjective, it suffices to prove that the maps
    \[
        \begin{array}{rcl}
            \Omega^{0 (1)_q}_1 & \to & H^0(\Omega^\ell) \\
            e_i \otimes 1 & \mapsto & [e_i^\ell \otimes 1]
        \end{array}, \quad \mbox{and} \quad
        \begin{array}{rcl}
            \Omega^{1 (1)_q}_1 & \to & H^1(\Omega^\ell) \\
            1 \otimes e_i & \mapsto & [e_i^{\ell-1} \otimes e_i]
        \end{array}
    \]
    are natural transformations. To see that the first one is a natural transformation, note that the composition
    \[
        \begin{array}{ccccc}
             \Omega^{0 (1)_q}_1 & \xrightarrow{\cong} & H^0(\Omega_\ell) & \hookrightarrow & S^\ell_q \\
             e_i \otimes 1 & \mapsto & [e_i^\ell \otimes 1] & \mapsto & e_i^\ell
        \end{array}
    \]
    is a natural transformation. Since $H^0(\Omega_\ell) \hookrightarrow S^\ell_q$ is an injective natural transformation, this shows $\Omega^{0(1)_q} \to H^0(\Omega_\ell)$ is also a natural transformation.

    For the second one, let
    \begin{align*}
        Z & = \ker(\delta : \Omega^1_\ell \to \Omega^2_\ell) \\
        B & = \Image(\delta : \Omega^0_\ell \to \Omega^1_\ell)
    \end{align*}
    so that $H^1(\Omega_\ell) = Z/B$. The Koszul differential $\koszul : \Omega^1_\ell \to \Omega^0_\ell$ restricts to a natural transformation $\koszul : Z \to \Omega^0_\ell$ 
    . If $x \in B(n)$, then by definition $x = \delta(y)$ for some $y \in \Omega^0_\ell(n)$, and hence by proposition \ref{koszul/de Rham compatibility},
    \[
        \koszul(x) = \koszul \delta(y) = -(-q^{-1})^2 \delta \koszul(y) = 0
    \]
    since $(\ell)_{q^2} = 0$ and $\koszul(y) = 0$ since $y \in \Omega^0_\ell$. Thus, $\koszul$ induces a natural transformation $\overline{\koszul} : H^1(\Omega_\ell) = Z/B \to \Omega^0_\ell$. This map is given by
    \[
        \overline{\koszul}([e_i^{\ell-1} \otimes e_i]) = [e_i^\ell \otimes 1]
    \]
    and hence is an isomorphism $\overline{\koszul} : H^1(\Omega_\ell) \xrightarrow{\cong} H^0(\Omega_\ell)$. Moreover, the map $\cartier_1^1$ is simply the composition :
    \[
        \Omega^{1(1)_q } = \Lambda^{1(1)_q} \cong S^{1(1)_q} = \Omega^{0(1)_q} \xrightarrow{\cartier(1)^0} H^0(\Omega_\ell) \xrightarrow{\overline{\koszul}^{-1}} H^1(\Omega_\ell)
    \]
    and thus is also a natural transformation.
\end{proof}

\subsection{The Koszul kernel subcomplex}

\begin{definition}
    For all $d,i \geq 0$, we define the subfunctor $N^i_d$ of $\Omega^i_d$ by
    \[
        N^i_d = \ker(\koszul : \Omega^i_d \to \Omega^{i-1}_d)
    \]
\end{definition}
When $d$ is divisible by $\ell$, they form a subcomplex of $\Omega^{\bullet}_d$ by proposition \ref{compatibility of the differential with product}. It is used in the $\Ext$-group computation of theorem \ref{theo:Ext(fro,fro)}, in the same way that the classical Koszul kernel subcomplex is used in \cite[theorem 4.5]{friedlander1997cohomology}. The following proposition gives the link these two complexes.

\begin{proposition}\label{Homologie noyau de Koszul}
    For all $i$ and $d$, we have an isomorphism :
    \[
        N^{i(1)_q}_d \cong H^i(N_{d\ell}^\bullet)
    \]
    where the $N$ on the right hand side is the complex defined in the same way as in the classical case. This isomorphism is induced by the isomorphism $\Omega^{i (1)_q}_d \to H^i(\Omega^{\bullet}_{d\ell})$ of theorem \ref{homology of the de Rham complex}.
\end{proposition}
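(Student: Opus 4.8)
The plan is to exploit the Koszul-de Rham compatibility (proposition \ref{koszul/de Rham compatibility}) to split the de Rham differential $\delta$ on $\Omega^\bullet_{d\ell}$, in each polynomial degree $d\ell$, into a direct sum of the subcomplex $N^\bullet_{d\ell}=\ker\koszul$ and a complementary acyclic piece, and then to read off the homology of $N^\bullet_{d\ell}$ from that of $\Omega^\bullet_{d\ell}$, which was computed in theorem \ref{cartier}. More precisely, first I would record that on $\Omega^i_{d\ell}=K^{d\ell-i}_{d\ell}$ the operator $L=(-q^{-1})^2\delta\koszul+\koszul\delta$ acts as the scalar $((-q^{-1})^2)^i(d\ell)_{q^2}$, which is $0$ since $\ell\mid d\ell$ and $(\ell)_{q^2}=0$ forces $(d\ell)_{q^2}=0$. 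Hence $\delta\koszul=-(-q^{-1})^{-2}\koszul\delta$ up to a unit, so $\koszul$ is (up to a nonzero scalar) a chain map $\Omega^\bullet_{d\ell}\to\Omega^{\bullet-1}_{d\ell}$, and consequently $N^\bullet_{d\ell}=\ker\koszul$ is a subcomplex and $\Image\koszul\subseteq\Omega^\bullet_{d\ell}$ is a subcomplex; this is exactly the structure used in the classical argument of \cite[theorem 4.5]{friedlander1997cohomology}.

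The second step is to analyse the three-term situation $0\to\ker\koszul\to\Omega^\bullet_{d\ell}\xrightarrow{\koszul}\Image\koszul\to 0$ together with the dual inclusion $\Image\koszul\hookrightarrow\ker\koszul$ (valid because $\koszul^2=0$). Using proposition \ref{compatibility of the differential with product} one checks, weight space by weight space, that in polynomial degree $d\ell$ the de Rham complex $\Omega^\bullet_{d\ell}$ decomposes as the direct sum of $N^\bullet_{d\ell}$ and a complex on which $\koszul$ is injective, and that the latter is contractible: indeed on $\Image\koszul$ one can build a contracting homotopy out of $\delta$ by the same scalar-inversion trick, since on the part where $\koszul$ is injective the operator $\koszul\delta$ is (up to a unit) a nonzero multiple of the identity composed with $\koszul$, so $\delta$ provides a nullhomotopy. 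Passing to homology, $H^i(\Omega^\bullet_{d\ell})\cong H^i(N^\bullet_{d\ell})$; combining with the Cartier isomorphism $\cartier^i_d:\Omega^{i(1)_q}_d\xrightarrow{\sim} H^i(\Omega^\bullet_{d\ell})$ of theorem \ref{cartier} and observing that the class of $e_i^\ell\otimes 1$ and of $e_i^{\ell-1}\otimes e_i$ actually lie in $N^\bullet_{d\ell}$ (being killed by $\koszul$ for weight reasons, exactly as in the proof of theorem \ref{cartier}), one gets that $\cartier$ restricts to an isomorphism $N^{i(1)_q}_d\xrightarrow{\sim}H^i(N^\bullet_{d\ell})$ which is natural in $V_n$, since everything in sight ($\koszul$, $\delta$, the Cartier map) is a natural transformation.

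The one point that needs real care — and which I expect to be the main obstacle — is showing that $N^\bullet_{d\ell}$ really is a direct summand of $\Omega^\bullet_{d\ell}$ with contractible complement, i.e. that the filtration by powers of $\koszul$ splits. The classical proof in \cite{friedlander1997cohomology} handles the analogous statement with a spectral sequence; here, following the stated philosophy of the paper (which avoids spectral sequences), I would instead argue directly on weight spaces: fix $\alpha\in\Omega(d,n)$ and restrict to the $\ell\alpha$-weight part, where by proposition \ref{homology of the de Rham complex} all the $\delta$'s vanish, so $H^i(\Omega_{d\ell})(n)_{\ell\alpha}=\Omega^i_{d\ell}(n)_{\ell\alpha}$; on such a weight space $\koszul$ is the classical Koszul differential up to units, hence $\ker\koszul$ and the complement $\Image\koszul$ are genuinely complementary as the Koszul complex over $S(V_n)$ is split exact off the generators, and one identifies $N^i_{d\ell}(n)_{\ell\alpha}$ with the classical $N^i$-space, which by the classical Cartier/Koszul computation matches $N^{i(1)_q}_d(n)_\alpha$. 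The non-$\ell\alpha$ weight spaces contribute nothing since both $\Omega^\bullet_{d\ell}$ and hence (by the short exact sequences above) $N^\bullet_{d\ell}$ are acyclic there. Assembling these weight-space identifications into a statement of functors requires only checking compatibility with morphisms $V_n\to V_m$, which is automatic from the naturality already built into $\koszul$ and $\cartier$.
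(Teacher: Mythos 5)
There is a genuine gap, and it is the central step of your plan: the claim that $\Omega^\bullet_{d\ell}$ splits as the direct sum of $N^\bullet_{d\ell}$ and a $\delta$-contractible complement, so that $H^i(\Omega^\bullet_{d\ell})\cong H^i(N^\bullet_{d\ell})$, is false. Already for $d=1$ the inclusion $N^\bullet_\ell\subseteq\Omega^\bullet_\ell$ is not a quasi-isomorphism: $N^1_\ell(n)_{\ell\cdot 1_i}=0$ because $\koszul(e_i^{\ell-1}\otimes e_i)=e_i^\ell\neq 0$, so $H^1(N^\bullet_\ell)=0$, whereas $H^1(\Omega^\bullet_\ell)\cong I^{(1)_q}\neq 0$ (this is exactly the computation used at the start of section \ref{sec:Extlarge}: $H^*(N^\bullet_\ell)$ is concentrated in degree $0$, while $H^*(\Omega_\ell)$ lives in degrees $0$ and $1$). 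Your conclusion is also internally inconsistent with the statement you are proving: if $H^i(\Omega_{d\ell})\cong H^i(N_{d\ell})$ held, then combining with theorem \ref{cartier} would force $N^{i(1)_q}_d\cong\Omega^{i(1)_q}_d$, which fails whenever $N^{i(1)_q}_d$ is a proper subfunctor of $\Omega^{i(1)_q}_d$ (e.g.\ $N^{1(1)_q}_1=0\neq\Lambda^{1(1)_q}$). The mechanism you propose for the contraction cannot work either: in polynomial degree $d\ell$ the scalar in proposition \ref{koszul/de Rham compatibility} is $((-q^{-1})^2)^i(d\ell)_{q^2}=0$, so $\koszul\delta$ is \emph{never} an invertible multiple of the identity there (it is minus a unit times $\delta\koszul$), and on the weight spaces $\ell\alpha$ all the de Rham differentials vanish (claim 2 in the proof of proposition \ref{homology of the de Rham complex}), so no nonzero summand supported on those weights can be $\delta$-contractible. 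The image of $\koszul$ precisely carries the homology classes of $\Omega^\bullet_{d\ell}$ that are absent from $N^\bullet_{d\ell}$.

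What is true, and what the paper's argument uses, is weaker and of a different nature. First, the weight-space description of homology applies to $N^\bullet_{d\ell}$ just as to $\Omega^\bullet_{d\ell}$, giving $H^i(N^\bullet_{d\ell})(n)=\bigoplus_{\alpha}N^i_{d\ell}(n)_{\ell\alpha}$; this identifies $H^i(N^\bullet_{d\ell})$ with the kernel of the map $\overline{\koszul}$ induced by $\koszul$ on $H^i(\Omega_{d\ell})$ (which exists precisely because the scalar above is zero). Second, one checks that the Cartier isomorphism intertwines the twisted classical Koszul differential $\koszul^{(1)}$ with $\overline{\koszul}$ up to a nonzero scalar, so that $\cartier$ maps $N^{i(1)_q}_d$ \emph{into} $H^i(N^\bullet_{d\ell})$. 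The remaining point — surjectivity — is where your weight-by-weight comparison in the last paragraph points in the right direction, but it must be completed by an actual dimension count: using the acyclicity of the quantum Koszul complex one gets $\dim N^i_{d\ell}(n)_{\beta}$ recursively from the $\dim\Omega^\bullet_{d\ell}(n)_\beta$, the same recursion holds classically for $\dim N^{i(1)_q}_d(n)_\alpha$, and since $\dim\Omega^i_{d\ell}(n)_{\ell\alpha}=\dim\Omega^{i(1)_q}_d(n)_\alpha$ one concludes by induction that $\dim N^i_{d\ell}(n)_{\ell\alpha}=\dim N^{i(1)_q}_d(n)_\alpha$, whence the restricted Cartier map is an isomorphism. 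So the statement cannot be obtained by transporting $H^*(\Omega_{d\ell})$ wholesale; you need the induced-$\overline{\koszul}$ compatibility plus this counting argument (or some equivalent substitute) to bridge $N^{i(1)_q}_d$ and $H^i(N^\bullet_{d\ell})$.
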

\begin{proof}
    We have again 
    \[
        H^i(N_{d\ell}^\bullet)(n) = \bigoplus_{\alpha \in \Omega(d,n)} (N_{d\ell}^i(n))_{\ell\alpha}
    \]
    as in the proof of theorem \ref{homology of the de Rham complex}. By proposition \ref{koszul/de Rham compatibility}, the koszul differential $\kappa$ induces maps $\overline{\kappa} : H^i(\Omega_{d\ell}) \to H^{i-1}(\Omega_{d\ell})$. Then, a simple computation shows that
    \begin{center}
        \begin{tikzpicture}[xscale = 3.5, yscale = 2]
            \node (M1) at (0,1) {$\Omega^{i (1)_q}_d$};
            \node (M2) at (1,1) {$\Omega^{i-1 (1)_q}_d$};
            \node (M3) at (0,0) {$H^i(\Omega^{\bullet}_{dl})$};
            \node (M4) at (1,0) {$H^{i-1}(\Omega^{\bullet}_{dl})$};

            \draw[->] (M1) -- (M2) node[midway,above] {$\kappa^{(1)}$};
            \draw[->] (M2) -- (M4) node[midway,right] {$c(n)^i$};
            \draw[->] (M1) -- (M3) node[midway,right] {$c(n)^{i-1}$};
            \draw[->] (M3) -- (M4) node[midway,above] {$\overline{\kappa}$};
        \end{tikzpicture}
    \end{center}
    commutes up to a (non-zero) multiplicative scalar. Since the kernel of $\overline{\kappa} : H^i(\Omega_{d\ell}) \to H^{i-1}(\Omega_{d\ell})$ is $H^i(N^\bullet_{d\ell})$, we deduce that the Cartier isomorphism sends $N^{i(1)_q}_d$ to $H^i(N_{d\ell}^\bullet)$. Moreover, using the acyclicity of the Koszul complex, for any $\beta \in \Omega(d\ell,n)$, we show
    \[
        \dim (N^i_{d\ell}(n)_{\beta}) = \dim(\Omega_{d\ell}^{i-1}(n)_\beta) - \dim(N^{i-1}_{d\ell}(n)_\beta)
    \]
    and similarly in the classical case, for any $\alpha \in \Omega(d,n)$
    \[
        \dim (N^{i(1)_q}_d(n)_{\alpha}) = \dim(\Omega^{i-1(1)_q}_d(n)_\alpha) - \dim(N^{i-1(1)_q}_d(n)_\alpha).
    \]
    Moreover, $\dim(N^0_{d\ell}(n)_{\beta}) = \dim(N^{0(1)_q}_d(n)_{\alpha}) = 1$, and knowing the weight spaces of $\Omega^i_{d\ell}$ and $\Omega^{i(1)_q}_d$, we show that
    \[
        \dim(\Omega_{dl}^{i}(n)_{\ell\alpha}) = \dim(\Omega^{i(1)_q}_d(n)_\alpha)
    \]
    for any $\alpha \in \Omega(d,n)$.
    Hence, by induction, 
    \[
        \dim(N^i_{d\ell}(n)_{\ell\alpha}) = \dim(N^{i(1)_q}_d(n)_{\alpha})
    \]
    for any $\alpha \in \Omega(d,n)$. This shows that $H^i(N_{d\ell})$ and $N^{i(1)_q}_d(n)$ have the same dimension. Hence, the restriction of the Cartier isomorphism to $N^{i(1)_q}_d$ defines an isomorphism from $N^{i(1)_q}_d$ to $H^i(N_{d\ell})$.
\end{proof}

\section{Application to $\Ext$ computations}\label{sec:Extlarge}

In this section, we will prove theorem \ref{thm-intro-1}. This theorem will follow from the computation of the $\Ext$-group $\Ext^*_{\quantumfunctor{}}(\bigotimes^{d(1)_q},\bigotimes^{d(1)_q})$. This computation will use in a crucial way the Koszul kernel complex $N^*_\ell$, whose homology is given by
\[
    H^*(N^*_\ell) \cong \left \{
    \begin{array}{cl}
        I^{(1)_q} & \mbox{if } *=0, \\
        0 & \mbox{otherwise.}
    \end{array}
    \right.
\]
We first prove a version of the Pirashvili lemma where the additivity condition is replaced by a condition on the weights.

\begin{lemma}\label{lem:generalized pirashvili}
    Let $A \in \quantumfunctor{}$ be a quantum polynomial functor. Assume that the weights of $A$ are compositions with at most $k$ non-zero parts. Let $F_1,...,F_{k+1} \in \quantumfunctor{}$ be homogeneous quantum polynomial functors of degree $\geq 1$. Then
    \[
        \Ext^*_{\quantumfunctor{}}(A,F_1 \otimes \cdots \otimes F_{k+1}) = 0.
    \]
\end{lemma}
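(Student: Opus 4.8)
The plan is to imitate the classical Pirashvili vanishing lemma, where one uses the injective cogenerators $S^\alpha$ of the functor category together with the fact that $\Hom(A, S^\alpha)$ reads off a weight space of $A^\#$. First I would reduce the problem to the case where each $F_i$ is a tensor product of quantum symmetric powers: since $S^*_q$ (via the $S^\beta_q$) forms an injective cogenerating family of $\quantumfunctor{}$ by Proposition \ref{property of symmetric power}, every $F_i$ embeds in a finite direct sum of functors of the form $S^{\beta}_q$, and one can build an injective resolution of each $F_i$ by such functors. The tensor product $F_1 \otimes \cdots \otimes F_{k+1}$ then has an injective resolution whose terms are tensor products $S^{\beta^1}_q \otimes \cdots \otimes S^{\beta^{k+1}}_q$, each of which is again of the form $S^{\gamma}_q$ for $\gamma$ the concatenation (up to reordering of the parts, which does not change the isomorphism type as a quantum polynomial functor, or at least does not change $\Hom$ out of $A$). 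So it suffices to prove $\Hom_{\quantumfunctor{}}(A, S^{\gamma}_q) = 0$ under a suitable constraint, and then chase the constraint through the resolution.

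The key point is the bookkeeping on weights. By Proposition \ref{property of symmetric power}, $\Hom_{\quantumfunctor{}}(A, S^\gamma_q) \cong A^\#(n)_\gamma$, where $n$ is the number of parts of $\gamma$ and $\gamma$ is regarded as a composition (here I should be a little careful: the cited proposition is stated for partitions $\gamma$ of $d$, but the weight-space description extends to compositions by reordering). The hypothesis is that every weight of $A$, hence every weight of $A^\#$, is a composition with at most $k$ non-zero parts. Now each $F_i$ is homogeneous of degree $\ge 1$, so in any injective resolution $F_i \hookrightarrow S^{\beta^i_{(0)}}_q \to S^{\beta^i_{(1)}}_q \to \cdots$ every composition $\beta^i_{(j)}$ is a composition of $\deg F_i \ge 1$, and therefore has at least one non-zero part. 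Consequently, when we form the tensor product of the $k+1$ resolutions, each term is $S^{\gamma}_q$ with $\gamma = \beta^1_{(j_1)} \sqcup \cdots \sqcup \beta^{k+1}_{(j_{k+1})}$ a concatenation of $k+1$ compositions each having $\ge 1$ non-zero part; after discarding zero parts and reordering, $\gamma$ still has at least $k+1$ non-zero parts. A weight of $A^\#$ has at most $k$ non-zero parts, so it can never equal (a reordering of) such a $\gamma$: the relevant weight space $A^\#(n)_\gamma$ is zero. Hence $\Hom_{\quantumfunctor{}}(A, S^{\gamma}_q) = 0$ for every term of the resolution of $F_1 \otimes \cdots \otimes F_{k+1}$, and computing $\Ext^*$ as the cohomology of the complex $\Hom_{\quantumfunctor{}}(A, -)$ applied to this resolution gives $\Ext^*_{\quantumfunctor{}}(A, F_1 \otimes \cdots \otimes F_{k+1}) = 0$ as claimed.

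The main obstacle is making the "tensor product of injective resolutions of the $F_i$ is an injective resolution of $F_1 \otimes \cdots \otimes F_{k+1}$, with terms again among the $S^\gamma_q$" step fully rigorous in the braided setting: one must check that $\quantumfunctor{}$ has enough injectives of the required shape, that $-\otimes F_i$ is exact (so the total complex of the tensor product of resolutions is again a resolution — over a field this is the usual Künneth-type argument), and that $S^{\beta^1}_q \otimes \cdots \otimes S^{\beta^{k+1}}_q$ is indeed of the form $S^\gamma_q$ with the parts of $\gamma$ being the concatenation of those of the $\beta^i$; the braiding enters the definition of the tensor product of functors, but on the symmetric powers this just realizes the usual isomorphism $S^a_q \otimes S^b_q$ as a direct summand of $\bigotimes^{a+b}$ sitting inside $S^{(a,b)}_q$-type objects, so the identification is clean. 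A secondary technical point is the passage from partitions to compositions in Proposition \ref{property of symmetric power}: reordering the tensor factors of an $S^\gamma_q$ gives an isomorphic quantum polynomial functor (the braiding supplies the isomorphism), so one may always arrange $\gamma$ to be a partition without affecting $\Hom_{\quantumfunctor{}}(A,-)$, and the count "at least $k+1$ non-zero parts" is unchanged by reordering. Once these points are settled the weight count is immediate.
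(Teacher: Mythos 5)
Your proposal is correct and follows essentially the same route as the paper: reduce to the vanishing of $\Hom_{\quantumfunctor{}}(A,S^{\beta^1}_q\otimes\cdots\otimes S^{\beta^{k+1}}_q)\cong A^{\#}(n)_{\beta^1.\cdots.\beta^{k+1}}$ via the weight-space description of proposition \ref{property of symmetric power}, note that the concatenation has at least $k+1$ non-zero parts, and then conclude by taking the tensor product of injective coresolutions of the $F_i$. Your worry about partitions versus compositions is not really an issue, since the $\Hom$ formula in proposition \ref{property of symmetric power} is already stated for arbitrary compositions $\alpha$; only the cogenerating statement is phrased with partitions.
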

\begin{proof}
    We start by proving that
    \[
        \Hom_{\quantumfunctor{}}(A,J_1 \otimes \cdots \otimes J_{k+1}) = 0 \quad (*)
    \]
    for any injective polynomial functors $J_1,...,J_{k+1}$ homogeneous of degree $\geq 1$. Since the $S^{\mu}_q$ form a cogenerating family of injective, we just need to prove $(*)$ for $J_i = S^{\alpha^i}_q$, with $\alpha^i \in \Omega(n_i,d_i)$ for some $n_i,d_i \geq 1$. In this case,
    \[
        \Hom_{\quantumfunctor{}}(A,S^{\alpha^1}_q \otimes \cdots \otimes S^{\alpha^{k+1}}_q) = 
        \Hom_{\quantumfunctor{}}(A,S^{\alpha^1 . \alpha^2. \cdots . \alpha^{k+1}}) =
        A^{\#}(n)_{\alpha^1 . \alpha^2. \cdots . \alpha^{k+1}}
    \]
    where $n = n_1 + \cdots + n_{k+1}$ and $\alpha.\beta$ denotes the concatenation product of composition. But since $d_1,...,d_{k+1} \geq 1$, each $\alpha^i$ has at least one non zero part, and hence the concatenation $\alpha^1 . \alpha^2. \cdots . \alpha^{k+1}$ has at least $k+1$ non-zero parts. Since moreover $A, A^\#$ have the same weight, our hypothesis implies
    \[
        A^{\#}(n)_{\alpha^1 . \alpha^2. \cdots . \alpha^{k+1}} = 0
    \]
    which proves $(*)$. Now we prove the $\Ext$ vanishing of the lemma. Take injective coresolutions $J_i^*$ of $F_i$. Then $J_1^* \otimes \cdots \otimes J_{k+1}^*$ is an injective coresolution of $F_1 \otimes \cdots \otimes F_{k+1}$, and hence we can compute $\Ext^*_{\quantumfunctor{}}(A,F_1 \otimes \cdots \otimes F_{k+1})$ as the homology of a complex $C$ with 
    \[
    C^s = \Hom_{\quantumfunctor{}}(A, \bigoplus_{s_1 + \cdots + s_{k+1} = s} J_1^{s_1} \otimes \cdots \otimes J_{k+1}^{s_{k+1}}).
    \]
    But $(*)$ proves that each $C^s$ is zero, whence the result.
\end{proof}
We start by computing $\Ext^*_{\quantumfunctor{}}(I^{(1)_q},I^{(1)_q})$. To perform this computation, we need the following lemma :
\begin{lemma}\label{lem:Ext(twist,kernel)}
    For $i=0,...,\ell-1$,
    \[
        \Ext^*_{\quantumfunctor{}}(I^{(1)_q},N^i_\ell) = \left \{ \begin{array}{cl}
            \field & \mbox{if } *=i, \\
            0 & \mbox{otherwise.}
        \end{array} \right .
    \]
\end{lemma}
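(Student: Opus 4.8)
The idea is to run a downward induction on $i$, starting from $i=\ell-1$, using the short exact sequences that define the Koszul kernel functors $N^i_\ell$. Recall that the Koszul complex $(K,\koszul)=(\Omega^{\bullet}_\ell,\koszul)$ is acyclic, so for each $i$ with $1\le i\le\ell$ the map $\koszul\colon \Omega^i_\ell\to\Omega^{i-1}_\ell$ has image exactly $N^{i-1}_\ell=\ker(\koszul\colon\Omega^{i-1}_\ell\to\Omega^{i-2}_\ell)$ (with the convention $N^{-1}_\ell=0$, and $N^{\ell}_\ell=\Omega^\ell_\ell=\Lambda^\ell_q$). Hence for each $i$ we have a short exact sequence in $\quantumfunctor{\ell}$
\[
    0\to N^i_\ell\to \Omega^i_\ell\xrightarrow{\koszul} N^{i-1}_\ell\to 0\;,
\]
where $\Omega^i_\ell=S^{\ell-i}_q\otimes\Lambda^i_q$.

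\textbf{Base case and the key vanishing.} The crucial input is that $\Ext^*_{\quantumfunctor{}}(I^{(1)_q},\Omega^i_\ell)=0$ for $1\le i\le \ell-1$. Indeed $\Omega^i_\ell=S^{\ell-i}_q\otimes\Lambda^i_q$ is a tensor product of two homogeneous functors of degree $\ell-i\ge 1$ and $i\ge 1$ respectively (this is where $1\le i\le\ell-1$ is used), so it is a tensor product of two homogeneous functors of positive degree. On the other hand $I^{(1)_q}$ has a single weight, namely $\ell\cdot 1_1=(\ell,0,\dots,0)$ in enough variables — more precisely, by Proposition \ref{property of quantum Frobenius twist}, the weights of $I^{(1)_q}$ are the compositions $\ell\beta$ with $\beta$ a weight of $I$, i.e. compositions with exactly one nonzero part. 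So $I^{(1)_q}$ satisfies the hypothesis of Lemma \ref{lem:generalized pirashvili} with $k=1$, and that lemma gives $\Ext^*_{\quantumfunctor{}}(I^{(1)_q},F_1\otimes F_2)=0$ for any two homogeneous functors of degree $\ge 1$; applying it to $F_1=S^{\ell-i}_q$, $F_2=\Lambda^i_q$ yields the desired vanishing. For the very top of the induction, $i=\ell-1$: the long exact sequence in $\Ext^*_{\quantumfunctor{}}(I^{(1)_q},-)$ attached to $0\to N^{\ell-1}_\ell\to\Omega^{\ell-1}_\ell\to N^{\ell-2}_\ell\to 0$ will be handled uniformly with the inductive step, but one should note that the case $i=\ell$ is degenerate ($N^\ell_\ell=\Lambda^\ell_q$, a single homogeneous factor) and is not asserted by the lemma; the induction genuinely starts from the known computation of $\Ext^*_{\quantumfunctor{}}(I^{(1)_q},N^0_\ell)$.

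\textbf{The induction and the degree-$0$ anchor.} For $i=0$ we have $N^0_\ell=\ker(\koszul\colon S^0_q\otimes\Lambda^0_q\to 0)$... rather, $N^0_\ell$ is the degree-$\ell$ part of $\ker\koszul$ in cohomological degree $0$, which by the acyclicity of the Koszul complex in positive cohomological degree and the computation of $H^0$ equals $H^0(K^\bullet_\ell)\cong \Lambda^{\ell(1)_q}$... — more precisely, as stated in the paragraph preceding Lemma \ref{lem:Ext(twist,kernel)} the relevant fact is that the homology of the Koszul kernel complex $N^\bullet_\ell$ is $I^{(1)_q}$ concentrated in degree $0$; but for the present lemma what we need at $i=0$ is simply that $N^0_\ell=S^\ell_q$ (the kernel of $\koszul$ out of cohomological degree $0$ is everything), so that $\Ext^*_{\quantumfunctor{}}(I^{(1)_q},N^0_\ell)=\Ext^*_{\quantumfunctor{}}(I^{(1)_q},S^\ell_q)$. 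Since $S^\ell_q$ is injective (Proposition \ref{property of symmetric power}), this Ext is concentrated in degree $0$, where by the same proposition it equals $(I^{(1)_q})^{\#}(n)_{(\ell)}=(I^{(1)_q})(n)_{(\ell)}=\field$. This establishes the case $i=0$. Now suppose the result holds for $N^{i-1}_\ell$ with $0\le i-1\le\ell-2$; apply $\Ext^*_{\quantumfunctor{}}(I^{(1)_q},-)$ to $0\to N^i_\ell\to\Omega^i_\ell\to N^{i-1}_\ell\to 0$ (valid since $1\le i\le\ell-1$). The long exact sequence reads
\[
    \cdots\to \Ext^{s}_{\quantumfunctor{}}(I^{(1)_q},\Omega^i_\ell)\to \Ext^{s}_{\quantumfunctor{}}(I^{(1)_q},N^{i-1}_\ell)\to \Ext^{s+1}_{\quantumfunctor{}}(I^{(1)_q},N^i_\ell)\to \Ext^{s+1}_{\quantumfunctor{}}(I^{(1)_q},\Omega^i_\ell)\to\cdots,
\]
and since the outer terms vanish for all $s$ by the key vanishing above, we get a degree-shifting isomorphism $\Ext^{s}_{\quantumfunctor{}}(I^{(1)_q},N^{i-1}_\ell)\cong \Ext^{s+1}_{\quantumfunctor{}}(I^{(1)_q},N^i_\ell)$ for all $s$. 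By the inductive hypothesis the left side is $\field$ for $s=i-1$ and $0$ otherwise, hence the right side is $\field$ for $s+1=i$ and $0$ otherwise, which is exactly the claim for $N^i_\ell$. This completes the induction.

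\textbf{Expected main obstacle.} The only genuinely delicate point is the bookkeeping at the two extremes: checking that the short exact sequences $0\to N^i_\ell\to\Omega^i_\ell\to N^{i-1}_\ell\to 0$ really do hold in the stated range (this uses acyclicity of the Koszul complex, i.e. that $\koszul\colon\Omega^i_\ell\to\Omega^{i-1}_\ell$ surjects onto $N^{i-1}_\ell$), and making sure the functor one anchors the induction at, $N^0_\ell$, is correctly identified with the injective $S^\ell_q$ so that the $\Ext$ is visibly concentrated in degree $0$. Everything else is a formal long-exact-sequence chase, the substance being entirely carried by Lemma \ref{lem:generalized pirashvili} applied to the one-weight functor $I^{(1)_q}$.
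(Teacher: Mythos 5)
Your proof is correct and takes essentially the same route as the paper: anchor at $N^0_\ell=S^\ell_q$, which is injective with $\Hom_{\quantumfunctor{}}(I^{(1)_q},S^\ell_q)\cong\field$, kill the middle terms $\Omega^i_\ell=S^{\ell-i}_q\otimes\Lambda^i_q$ via Lemma \ref{lem:generalized pirashvili} applied to the one-nonzero-part weights of $I^{(1)_q}$, and chase the long exact sequences coming from $0\to N^i_\ell\to\Omega^i_\ell\to N^{i-1}_\ell\to 0$ to obtain the degree-shifting isomorphisms. The only (unused, hence harmless) slip is the parenthetical claim $N^\ell_\ell=\Lambda^\ell_q$: by acyclicity of the Koszul complex one has $N^\ell_\ell=\ker(\koszul\colon\Omega^\ell_\ell\to\Omega^{\ell-1}_\ell)=0$.
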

\begin{proof}
    We start by $N^0_\ell = S^\ell_q$. Since it is injective, we only need to compute $\Ext^0_{\quantumfunctor{}}(I^{(1)_q},S^\ell_q) = \Hom_{\quantumfunctor{}}(I^{(1)_q},S^\ell_q)$. By proposition \ref{property of symmetric power},
    \[
        \Hom_{\quantumfunctor{}}(I^{(1)_q},S^\ell_q) \cong I^{(1)_q}(1)_{(\ell)} \cong \field.
    \]
    This proves the case $i=0$. Now, we have short exact sequences
    \[ 
    0 \to N^{i+1}_\ell \to S^{\ell-i-1}_q \otimes \Lambda^{i+1}_q \to N^i_\ell \to 0
    \]
    for $i=0,..., \ell - 2$. But for such $i$, $\Ext_{\quantumfunctor{}}^*(I^{(1)_q}, S^{\ell-i-1}_q \otimes \Lambda^{i+1}_q) = 0$ by Pirashvili lemma \ref{lem:generalized pirashvili}. Hence, the long exact sequence in homology shows that
    \[
        \Ext^*_{\quantumfunctor{}}(I^{(1)_q},N^{i+1}_\ell) \cong \Ext^{*-1}_{\quantumfunctor{}}(I^{(1)_q},N^{i}_\ell).
    \]
    We conclude the proof by induction.
\end{proof}

\begin{theorem}\label{theo:Ext(fro,fro)}
    As a graded $\field$-algebra, $\Ext^*_{\quantumfunctor{}}(I^{(1)_q},I^{(1)_q})$ is generated by an element $e$ in degree $2$, with only relation $e^\ell=0$.
\end{theorem}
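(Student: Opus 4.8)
The plan is to realize $\Ext^*_{\quantumfunctor{}}(I^{(1)_q},I^{(1)_q})$ as the homology of a concrete small complex built from the Koszul kernel subcomplex $N^\bullet_\ell$, then read off the algebra structure. First I would record that $H^\bullet(N^\bullet_\ell) \cong I^{(1)_q}$ concentrated in cohomological degree $0$; this follows from the acyclicity of the quantum Koszul complex together with proposition \ref{Homologie noyau de Koszul} applied with $d=1$ (so $d\ell=\ell$), since $N^{0(1)_q}_1 = S^{1(1)_q} = I^{(1)_q}$ and $N^{i(1)_q}_1 = 0$ for $i>0$ (the classical complex $N^\bullet_1$ is just $I$ in degree $0$). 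Hence $N^\bullet_\ell$ is a resolution of $I^{(1)_q}$ — not by injectives, but by the functors $N^i_\ell$, each of which sits in the short exact sequences $0 \to N^{i+1}_\ell \to S^{\ell-i-1}_q\otimes\Lambda^{i+1}_q \to N^i_\ell \to 0$ appearing in lemma \ref{lem:Ext(twist,kernel)}.

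Next I would compute $\Ext^*$ into this complex. Applying $\Hom_{\quantumfunctor{}}(I^{(1)_q},-)$ to the resolution $N^\bullet_\ell \to I^{(1)_q}$, the hypercohomology spectral sequence has $E_1^{s,t} = \Ext^t_{\quantumfunctor{}}(I^{(1)_q},N^s_\ell)$, which by lemma \ref{lem:Ext(twist,kernel)} is $\field$ when $t=s$ (for $0\le s\le\ell-1$) and $0$ otherwise. So the $E_1$-page is concentrated on the diagonal $t=s$, all differentials vanish for degree reasons, and the spectral sequence degenerates; we obtain $\Ext^k_{\quantumfunctor{}}(I^{(1)_q},I^{(1)_q}) = \field$ for $k$ even with $0\le k/2 \cdot 2 \le 2(\ell-1)$ — more precisely one copy of $\field$ in each even total degree $0,2,4,\dots,2(\ell-1)$, living in bidegree $(s,t)=(k/2,k/2)$ — and $0$ in all other degrees. (Alternatively, I would bypass the spectral sequence entirely, in the spirit of the paper's remark that the proof of theorem \ref{thm-intro-1} avoids spectral sequences, by splicing the short exact sequences and using lemma \ref{lem:generalized pirashvili} to dimension-shift: $\Ext^*(I^{(1)_q},N^{i+1}_\ell)\cong\Ext^{*-1}(I^{(1)_q},N^i_\ell)$, as already done inside the proof of lemma \ref{lem:Ext(twist,kernel)}, together with the fact that the iterated connecting maps assemble $\Ext^*(I^{(1)_q},I^{(1)_q})$ from the $\Ext^*(I^{(1)_q},N^i_\ell)$.) This shows $\dim \Ext^k = 1$ for $k = 0,2,\dots,2\ell-2$ and $0$ otherwise; in particular $e^\ell$ must vanish since $\Ext^{2\ell} = 0$, and $E_1$ is generated in degree $2$ as a vector space by $e, e^2, \dots, e^{\ell-1}$.

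What remains is to promote the vector-space statement to the algebra statement, i.e. to show the class $e$ in degree $2$ genuinely generates under the Yoneda product, equivalently that $e^i \neq 0$ for $i < \ell$. The clean way is to exhibit the generator concretely: $e$ is the Yoneda class of the $2$-extension $0 \to I^{(1)_q} \to S^{\ell-1}_q\otimes\Lambda^1_q \to S^{\ell-2}_q\otimes\Lambda^2_q \to \dots$ — actually the relevant $2$-step piece $0 \to I^{(1)_q} = N^1_\ell \cap \ker \to S^\ell_q \to S^{\ell-1}_q\otimes\Lambda^1_q \to I^{(1)_q} \to 0$ extracted from $N^\bullet_\ell$, and splicing this extension with itself $i$ times gives a Yoneda representative of $e^i$; the point is that $\ell$-fold splicing produces the trivial extension (this is precisely the relation $e^\ell=0$ coming from the length of $N^\bullet_\ell$), while $(\ell-1)$-fold splicing is nonzero because it maps onto a nonzero class in $\Ext^{2\ell-2}$. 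Concretely I would use the multiplicative structure on the Koszul kernel complexes: the product $N^\bullet_\ell \otimes N^\bullet_\ell \to N^\bullet_{2\ell}$ (compatible with $\koszul$ and $\delta$ by proposition \ref{compatibility of the differential with product}) induces the Yoneda product on $\Ext^*(I^{(1)_q}, -)$ after applying the Cartier-type identification $H^\bullet(N^\bullet_{d\ell}) \cong N^{\bullet(1)_q}_d$, and one checks that the generator of $\Ext^2(I^{(1)_q},I^{(1)_q})$ times itself $i$ times is the generator of $\Ext^{2i}$ precisely because the corresponding weight-space computation (all in the single weight $(\ell)$, resp. $(i\ell)$) is one-dimensional and nonzero.

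The main obstacle is this last step — pinning down nonvanishing of the powers $e^i$, i.e. getting the ring structure rather than merely the additive structure. The additive computation is a routine dimension-shift, but identifying the Yoneda product with the multiplication on $N^\bullet$ (or equivalently with the algebra structure on $\Omega$ and its homology via theorem \ref{cartier}) requires care: one must check that the splice of extensions corresponds to the product of complexes under the chosen quasi-isomorphisms, and that the resulting class is detected in the relevant one-dimensional weight space. Once this naturality/multiplicativity is in hand, $e^{\ell-1}\neq 0$ and $e^\ell = 0$ are immediate from the length and homology of $N^\bullet_\ell$ already established, yielding the presentation $E_1 = \field[e]/\langle e^\ell\rangle$.
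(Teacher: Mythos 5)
Your additive computation is sound and is essentially the paper's: lemma \ref{lem:Ext(twist,kernel)} plus dimension shifting along the short exact sequences $0\to N^{i+1}_\ell\to S^{\ell-i-1}_q\otimes\Lambda^{i+1}_q\to N^i_\ell\to 0$ (or, equivalently, your degenerate hypercohomology spectral sequence) gives $\Ext^{2i}_{\quantumfunctor{}}(I^{(1)_q},I^{(1)_q})=\field$ for $0\le i\le \ell-1$ and zero otherwise, and $e^\ell=0$ is then automatic. The genuine gap is exactly the step you flag as ``the main obstacle'': nothing in your argument proves $e^i\neq 0$ for $1<i<\ell$, and the route you sketch does not close it. The product $N^\bullet_\ell\otimes N^\bullet_\ell\to N^\bullet_{2\ell}$ (which does exist, by the $q$-Leibniz rule of proposition \ref{compatibility of the differential with product}) induces on $\Ext$-groups an \emph{external} (cup-type) product landing in $\Ext^*_{\quantumfunctor{}}(I^{(1)_q}\otimes I^{(1)_q},-)$ of degree-$2\ell$ functors, not the internal Yoneda composition on $\Ext^*_{\quantumfunctor{}}(I^{(1)_q},I^{(1)_q})$; identifying the two is precisely what would have to be proved, and the appeal to a one-dimensional weight space cannot substitute for it, since the issue is whether the class vanishes (your ``nonzero because it maps onto a nonzero class in $\Ext^{2\ell-2}$'' is circular). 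Note also that the explicit $2$-extension you write down is not exact: the cokernel of $S^\ell_q\to S^{\ell-1}_q\otimes\Lambda^1_q$ is not $I^{(1)_q}$; the correct right-hand middle term is $G_1=\ker(\delta:\Omega^1_\ell\to\Omega^2_\ell)$.

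The paper closes this gap without any chain-level comparison of products. It cuts the degree-$\ell$ de Rham complex into $0\to I^{(1)_q}\to S^\ell_q\to G\to 0$ and $0\to G\to G_1\to I^{(1)_q}\to 0$, shows that the connecting maps $\delta_2:\Ext^*_{\quantumfunctor{}}(I^{(1)_q},I^{(1)_q})\to\Ext^{*+1}_{\quantumfunctor{}}(I^{(1)_q},G)$ and $\delta_1:\Ext^*_{\quantumfunctor{}}(I^{(1)_q},G)\to\Ext^{*+1}_{\quantumfunctor{}}(I^{(1)_q},I^{(1)_q})$ are isomorphisms in the relevant range, and then uses that connecting homomorphisms are morphisms of right $E_1$-modules for the Yoneda product (MacLane), so $\delta_1\circ\delta_2$ commutes with right multiplication by $e$. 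Since $\cdot e:\Ext^0\to\Ext^2$ is an isomorphism (both sides are one-dimensional and $1\cdot e=e\neq 0$), an induction along the commuting squares shows $\cdot e:\Ext^{2i}\to\Ext^{2i+2}$ is an isomorphism for $i\le\ell-2$, hence $e^i\neq 0$ for all $i<\ell$, which together with your dimension count yields $E_1=\field[e]/\langle e^\ell\rangle$. To repair your proposal you would need either this module-structure argument or an actual proof that the Yoneda product is computed by your complex-level multiplication; as it stands, the multiplicative half of the theorem is asserted rather than proved.
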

\begin{proof}
    Since
    \[
        0 \to I^{(1)_q} \to N_0^l \to N^1_\ell \to \cdots \to N^{\ell-1}_\ell \to 0
    \]
    is an exact sequence, it splits into $\ell-1$ short exact sequences
    \[
        0 \to F_i \to N^i_\ell \to F_{i+1} \to 0
    \]
    where $F_i$ is the kernel of $N^i_\ell \to N^{i+1}_\ell$ (and hence $F_{\ell-1} = N_\ell^{\ell-1}$ and $F_0 = I^{(1)_q}$). Using this, we prove by induction that
    \[
        \Ext_{\quantumfunctor{}}^*(I^{(1)_q},F_i) = \left \{
        \begin{array}{cl}
            \field & \mbox{if } * = 2t-i \mbox{ for some } i \leq t \leq \ell-1, \\
            0 & \mbox{otherwise.}
        \end{array}
        \right .
    \]
    The induction goes as follows. First, since $F_{\ell-1} = N^{\ell-1}_\ell$, the case $i=\ell-1$ follows from lemma \ref{lem:ext(tenseur,kernel)}. Then, use the long exact sequence associated to the above short exact sequences to deduce the case $i$ from the case $i+1$, for $i=\ell-2,\ell-3,...,0$. Now, since 
    \[
        H^*(\Omega_\ell) = \left \{
        \begin{array}{cl}
            I^{(1)_q} & \mbox{if } *=0,1\ ,\\
            0 & \mbox{otherwise.}
        \end{array}
        \right .
    \]
    the de Rham complex splits into short exact sequences
    \[
        0 \to G_i \to \Omega^i_\ell \to G_{i+1} \to 0
    \]
    for $i=1,...,\ell-1$, where $G_i$ is the kernel of $\Omega^i_\ell \to \Omega^{i+1}_\ell$, and two more short exact sequences
    \[
        0 \to I^{(1)_q} \to S^\ell_q \to G \to 0 \ \ (*_1), \qquad 0 \to G \to G_1 \to I^{(1)_q} \to 0 \ \ (*_2),
    \]
    where $G$ is the image of $\Omega^0_\ell \to \Omega^{1}_\ell$. We can prove by induction that for $i=1,...,\ell$,
    \[
        \Ext^*_{\quantumfunctor{}}(I^{(1)_q}, G_i) = \left \{
        \begin{array}{cl}
            \field & \mbox{if } * = 2\ell-1-i, \\
            0 & \mbox{otherwise.}
        \end{array}
        \right .
    \]
    This follows from the fact that $\Ext^*_{\quantumfunctor{}}(I^{(1)_q},\Omega^i_\ell) = 0$ for $i=1,...,\ell-1$, which can be deduced from Pirashvili lemma \ref{lem:generalized pirashvili}. In particular
    \[
        \Ext^*_{\quantumfunctor{}}(I^{(1)_q}, G_1) = \left \{
        \begin{array}{cl}
            \field & \mbox{if } * = 2\ell-2, \\
            0 & \mbox{otherwise.}
        \end{array}
        \right .
    \]
    Using the long exact sequence in cohomology induced by $(*_1)$, we obtain that
    \[
        \Ext^*_{\quantumfunctor{}}(I^{(1)_q},G) = \left \{
        \begin{array}{cl}
            \field & \mbox{if } * = 2t - 1 \mbox{ for some } 1 \leq t \leq \ell-1, \\
            0 & \mbox{otherwise.}
        \end{array}
        \right .
    \]
    and moreover the connecting morphism $\delta_1 : \Ext^*_{\quantumfunctor{}}(I^{(1)_q},G) \to \Ext^{*+1}_{\quantumfunctor{}}(I^{(1)_q},I^{(1)_q})$ is an isomorphism. Now, using the long exact sequence induced by $(*_2)$, we prove that the connecting morphism $\delta_2 : \Ext^*_{\quantumfunctor{}}(I^{(1)_q},I^{(1)_q}) \to \Ext^{*+1}_{\quantumfunctor{}}(I^{(1)_q},G)$ is an isomorphism for $* \neq 2\ell-2$. Let $e$ be any non-zero element of $\Ext^2_{\quantumfunctor{}}(I^{(1)_q},I^{(1)_q})$. Then, since $\delta_1 \circ \delta_2$ is a morphism of right $\Ext^*_{\quantumfunctor{}}(I^{(1)_q},I^{(1)_q})$-modules (see \cite[Chapter 3, theorem 9.1]{maclane2012homology}), for all $* \geq 0$, the following diagram
    \begin{center}
        \begin{tikzcd}
            \Ext^*_{\quantumfunctor{}}(I^{(1)_q},I^{(1)_q}) \arrow[r,"\delta_1 \circ \delta_2"] \arrow[d,"\cdot e"]
            & \Ext^{*+2}_{\quantumfunctor{}}(I^{(1)_q},I^{(1)_q}) \arrow[d,"\cdot e"] \\
            \Ext^{*+2}_{\quantumfunctor{}}(I^{(1)_q},I^{(1)_q}) \arrow[r,"\delta_1 \circ \delta_2"] 
            & \Ext^{*+4}_{\quantumfunctor{}}(I^{(1)_q},I^{(1)_q})
        \end{tikzcd}
    \end{center}
    commute. Since for $*=0$, the left map is an isomorphism, and since the top and bottom map are isomorphisms for $* \leq 2\ell-6$, we conclude by induction that for $* \leq 2\ell-6$, the right map is also an isomorphism, whence the result.
\end{proof}

In the rest of this section, we will use in a crucial way the cup product. Let us recall the necessary facts about it to fix notations.
\begin{definition}\label{def:cup}
    For all quantum polynomial functors $F_1,F_2,G_1,G_2$, there is a graded map
    \[
        \begin{array}{rcl}
            \Ext_{\quantumfunctor{}}(F_1,G_1) \otimes \Ext_{\quantumfunctor{}}(F_2,G_2) & \to & \Ext_{\quantumfunctor{}}(F_1 \otimes F_2, G_1 \otimes G_2) \\
            c_1 \otimes c_2 & \mapsto & c_1 \cup c_2
        \end{array}
    \]
    such that :
    \begin{itemize}
        \item In degree $0$, the cup product is given by the usual tensor product of morphisms.
        \item Let $F_i,G_i$, $i=1,2,3$, be quantum polynomial functors, and $c_i \in \Ext_{\quantumfunctor{}}(F_i,G_i)$. Then
        \[
            c_1 \cup (c_2 \cup c_3) = (c_1 \cup c_2) \cup c_3.
        \]
        \item Let $F_i,G_i,H_i$, $i=1,2$, be quantum polynomial functors, and $\phi_i : G_i \to H_i$, $i=1,2$, be morphisms. Then for any $c_i \in \Ext_{\quantumfunctor{}}(F_i,G_i)$, $i=1,2$, we have
        \[
            (\phi_1 \cdot c_1) \cup (\phi_2 \cdot c_2) = (\phi_1 \otimes \phi_2) \cdot (c_1 \cup c_2)
        \]
        where the $\cdot$ denote the Yoneda product.
        \item If we fix $F_1,F_2$ and $G_1$, then the cup product define a morphism of $\delta$-functors from $\Ext_{\quantumfunctor{}}(F_1,G_1) \otimes \Ext_{\quantumfunctor{}}(F_2,-)$ to $\Ext_{\quantumfunctor{}}(F_1 \otimes F_2,G_1 \otimes -)$. Similarly, if we fix $G_2$ instead of $G_1$, the cup product also give a morphism of $\delta$-functors.
    \end{itemize}
\end{definition}
    Now, since the Frobenius is fully faithful,
    \[
        \Hom_{\quantumfunctor{}}(\mbox{$\bigotimes^{d(1)_q},\bigotimes^{d(1)_q}$}) \cong \Hom_{\polyfunctor{}_1}(\mbox{$\bigotimes^d,\bigotimes^d$}) \cong \field \symgroup{d}.
    \]
    With this isomorphism, we identify $\Hom_{\quantumfunctor{}}(\mbox{$\bigotimes^{d(1)_q},\bigotimes^{d(1)_q}$})$ with $\field \symgroup{d}$. Using the cup product and the Yoneda product, we define for all $F_1,...,F_d \in \quantumfunctor{}$ a map
    \[
        \Psi(F_1,...,F_d) : \left \{
        \begin{array}{rcl}
            \left ( \bigotimes_{j=1}^d \Ext^{s_j}_{\quantumfunctor{}}(I^{(1)_q}, F_j) \right ) \otimes \field \symgroup{d} & \to & \Ext^s_{\quantumfunctor{}}(\bigotimes^{d(1)_q}, F_1 \otimes \cdots \otimes F_d) \\
            (c_1 \otimes \cdots \otimes c_d) \otimes \sigma & \mapsto & (c_1 \cup \cdots \cup  c_d) \cdot \sigma
        \end{array} \right .
    \]   
Note that, letting $F_i$ be a variable and the $F_j$ for $j \neq i$ be fixed, $\Psi(F_1,...,F_d)$ is a morphism between $\delta$-functors from $\quantumfunctor{}$ to $\mod-\field\symgroup{d}$.

A first application is the following lemma :
\begin{lemma}\label{lem:ext(tenseur,kernel)}
    For any composition $\alpha$ in $d$ parts $<\ell$, the map $\Psi(N_\ell^{\alpha_1},...,N_\ell^{\alpha_d})$ is an isomorphism of right $\field \symgroup{d}$-modules. Hence, if we let $N_\ell^{\alpha} = N_\ell^{\alpha_1} \otimes \cdots \otimes N_\ell^{\alpha_d}$, we have
    \[
        \Ext_{\quantumfunctor{}}^t(\mbox{$\bigotimes^{d(1)_q}$},N_\ell^{\alpha}) = \left \{
        \begin{array}{cc}
            \field \symgroup{d} & \mbox{if } t=|\alpha|, \\
            0 & \mbox{otherwise.}
        \end{array}
        \right .
    \]
\end{lemma}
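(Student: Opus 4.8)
The plan is to proceed by induction on $d$, using the multiplicative structure of the cup product together with the Künneth-type vanishing provided by the generalized Pirashvili lemma \ref{lem:generalized pirashvili}. For the base case $d=1$, the map $\Psi(N_\ell^{\alpha_1})$ sends $c \otimes 1$ to $c$, so the claim is exactly Lemma \ref{lem:Ext(twist,kernel)}, which gives $\Ext^*_{\quantumfunctor{}}(I^{(1)_q},N_\ell^{\alpha_1}) = \field$ concentrated in degree $\alpha_1$ (recall $\alpha_1 < \ell$). For the inductive step I would split $\bigotimes^{d(1)_q} = \bigotimes^{(d-1)(1)_q} \otimes I^{(1)_q}$ and $N_\ell^\alpha = N_\ell^{\alpha'} \otimes N_\ell^{\alpha_d}$ where $\alpha' = (\alpha_1,\dots,\alpha_{d-1})$, and analyze $\Ext^*_{\quantumfunctor{}}(\bigotimes^{d(1)_q}, N_\ell^\alpha)$ by resolving the second factor $N_\ell^{\alpha_d}$.

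The key computational device is the short exact sequences $0 \to N^{i+1}_\ell \to S^{\ell-i-1}_q \otimes \Lambda^{i+1}_q \to N^i_\ell \to 0$ used already in Lemma \ref{lem:Ext(twist,kernel)}: splicing these for $i = 0, \dots, \alpha_d - 1$ yields a resolution of $N_\ell^{\alpha_d}$ by functors of the form $S_q^{\bullet} \otimes \Lambda_q^{\bullet}$, i.e.\ by a tensor product of two homogeneous functors of degree $\ge 1$. Tensoring on the left by a (fixed) resolution of $N_\ell^{\alpha'}$, one gets a resolution of $N_\ell^\alpha$ in which every term is a tensor product $F_1 \otimes F_2 \otimes F_3$ of three homogeneous functors of degree $\ge 1$; but the weights of $\bigotimes^{d(1)_q}$ are the compositions $\ell\beta$ with $\beta$ of degree $d$ in $\le 2$ nonzero parts is \emph{false} — rather I should observe that $\bigotimes^{d(1)_q}$ has weights with at most $d$ nonzero parts, so I instead use a \emph{two}-block splitting and invoke Pirashvili only for the outer factor. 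Concretely: apply $\Ext^*_{\quantumfunctor{}}(\bigotimes^{d(1)_q}, -)$ to $0 \to N^{i+1}_\ell \to S^{\ell-i-1}_q \otimes \Lambda^{i+1}_q \to N^i_\ell \to 0$ tensored on the left with $N_\ell^{\alpha'}$; the middle term becomes $N_\ell^{\alpha'} \otimes S^{\ell-i-1}_q \otimes \Lambda^{i+1}_q$, and since $N_\ell^{\alpha'}$ is a subfunctor of $\bigotimes^{|\alpha'|}_q$ whose weights have at most $d-1$ nonzero parts — wait, that is not enough either. The correct bound is that $\bigotimes^{d(1)_q}$ has weights of the form $\ell\beta$ with $\beta \in \Omega(|\alpha'| + 1, n)$, hence at most $d$ nonzero parts, so $\Ext^*_{\quantumfunctor{}}(\bigotimes^{d(1)_q}, G_1 \otimes \cdots \otimes G_{d+1}) = 0$ by Lemma \ref{lem:generalized pirashvili} whenever all $G_j$ are homogeneous of degree $\ge 1$. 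Applying this with the resolution of $N_\ell^{\alpha'}$ (by $d-1$-fold tensor products of $S_q^\bullet\otimes\Lambda_q^\bullet$-type functors, from induction) and the two extra factors $S_q^{\ell-i-1}, \Lambda_q^{i+1}$ gives at least $d+1$ tensor factors, hence the middle terms contribute nothing to $\Ext$. The long exact sequence then gives dimension shifts $\Ext^*_{\quantumfunctor{}}(\bigotimes^{d(1)_q}, N_\ell^{\alpha'} \otimes N^{i+1}_\ell) \cong \Ext^{*-1}_{\quantumfunctor{}}(\bigotimes^{d(1)_q}, N_\ell^{\alpha'} \otimes N^i_\ell)$, and descending from $i = \alpha_d - 1$ to $i = 0$ together with the case $i=0$ (where $N^0_\ell = S_q^\ell$ and one computes $\Hom$ directly via Proposition \ref{property of symmetric power}, reducing to the inductive hypothesis for $N_\ell^{\alpha'}$) yields $\Ext^t_{\quantumfunctor{}}(\bigotimes^{d(1)_q}, N_\ell^\alpha) = \field\symgroup{d}$ if $t = |\alpha|$ and $0$ otherwise, as a vector space.

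It remains to identify this with $\Psi(N_\ell^{\alpha_1},\dots,N_\ell^{\alpha_d})$ and check right $\field\symgroup{d}$-equivariance. Here I would argue that $\Psi$ is a morphism of $\delta$-functors in the last variable (as noted after its definition), so it is compatible with the connecting homomorphisms of all the long exact sequences above; since the cup product of the degree-$|\alpha|$ generators of the $\Ext^{\alpha_j}_{\quantumfunctor{}}(I^{(1)_q}, N_\ell^{\alpha_j})$ lands in degree $|\alpha|$ and is nonzero (traceable through the dimension shifts, using the degree-$0$ part where the cup product is the honest tensor product of maps and hence visibly injective), $\Psi$ is an isomorphism in the one degree where both sides are nonzero. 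Right $\symgroup{d}$-equivariance is built into the definition of $\Psi$ via the Yoneda action of $\Hom_{\quantumfunctor{}}(\bigotimes^{d(1)_q}, \bigotimes^{d(1)_q}) \cong \field\symgroup{d}$, which commutes with connecting maps by \cite[Chapter 3, Theorem 9.1]{maclane2012homology}. The main obstacle I anticipate is bookkeeping the tensor-factor count carefully enough to apply Lemma \ref{lem:generalized pirashvili} at every stage — one must make sure that after $d-1$ inductive splittings the resolution terms genuinely have $\ge d+1$ homogeneous tensor factors of positive degree — and separately verifying that the cup-product generators are nonzero rather than merely that the dimensions match, which is where tracking the degree-$0$ piece through the spliced sequences is essential.
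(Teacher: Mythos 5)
Your overall machinery is the right one and matches the paper's: use the Koszul short exact sequences $0 \to N^{i+1}_\ell \to S^{\ell-i-1}_q \otimes \Lambda^{i+1}_q \to N^i_\ell \to 0$, kill the middle terms with Lemma \ref{lem:generalized pirashvili} (the middle term contributes $d+1$ homogeneous factors of degree $\ge 1$, while the weights of $\bigotimes^{d(1)_q}$ are of the form $\ell\beta$ and so have at most $d$ nonzero parts — and note you do not need any resolution of $N_\ell^{\alpha'}$ for this count, since $N_\ell^{\alpha'}$ is already a $(d-1)$-fold tensor product of homogeneous functors of degree $\ell$), and use the $\delta$-functoriality of $\Psi$ in each variable so that the connecting isomorphisms transport the statement. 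The genuine gap is in your endgame. After shifting $\alpha_d$ down to $0$ you face $\Ext^*_{\quantumfunctor{}}(\bigotimes^{d(1)_q}, N_\ell^{\alpha'} \otimes S^\ell_q)$, and your claim that this is handled by ``computing $\Hom$ directly via Proposition \ref{property of symmetric power}, reducing to the inductive hypothesis for $N_\ell^{\alpha'}$'' does not work: the functor $N_\ell^{\alpha'} \otimes S^\ell_q$ is neither injective nor of the form $S^\mu_q$ when $\alpha'\neq 0$, so higher $\Ext$ does not vanish for formal reasons and Proposition \ref{property of symmetric power} does not compute it; and the inductive hypothesis for $d-1$ cannot be invoked because the source is still $\bigotimes^{d(1)_q}$, not $\bigotimes^{(d-1)(1)_q}$ — passing from one to the other would require a K\"unneth-type decomposition of $\Ext^*_{\quantumfunctor{}}(\bigotimes^{d(1)_q}, N_\ell^{\alpha'}\otimes S^\ell_q)$ which is not available at this point (it is essentially a piece of what this lemma and theorem \ref{theo:ext(tenseur,tenseur)} are establishing).

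The repair is to drop the induction on $d$ altogether and instead reduce the coordinates of $\alpha$ to zero one at a time, exactly as the paper does: the Pirashvili factor count is unaffected when some coordinates have already been replaced by $N^0_\ell = S^\ell_q$, since $S^\ell_q$ is still homogeneous of degree $\ell \ge 1$. After all coordinates are reduced you land on $(S^\ell_q)^{\otimes d} = S^{(\ell,\dots,\ell)}_q$, which \emph{is} injective, so only degree $0$ matters; there one computes $\Hom_{\quantumfunctor{}}(\bigotimes^{d(1)_q},(S^\ell_q)^{\otimes d}) \cong \bigotimes^{d(1)_q}(d)_{(\ell,\dots,\ell)} = \bigotimes^d(d)_{(1,\dots,1)} \cong \field\symgroup{d}$ using Propositions \ref{property of symmetric power} and \ref{property of quantum Frobenius twist}, and checks directly that the degree-zero map $\varphi^{\otimes d}\circ\sigma$ (with $\varphi$ the embedding of Proposition \ref{natural embedding}) is an isomorphism. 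Your final paragraph on $\delta$-functoriality and right $\field\symgroup{d}$-equivariance of $\Psi$ is correct and is exactly what makes the chain of connecting isomorphisms carry the base case to general $\alpha$; with the endgame fixed as above, the argument closes.
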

\begin{proof}
    By acyclicity of the Koszul complex, we have short exact sequences
    \[
        0 \to N^i_\ell \to \Omega^i_\ell \to N^{i-1}_\ell \to 0.
    \]
    Since $\Omega^i_\ell = S^{\ell-i}_q \otimes \Lambda^i_q$, the Pirashvili lemma (lemma \ref{lem:generalized pirashvili}) implies that, if $i \neq 0,\ell$,
    \[
        \Ext^*_{\quantumfunctor{}}(I^{(1)_q}, \Omega^i_\ell) = \Ext_{\quantumfunctor{}}^*(\mbox{$\bigotimes^{d(1)_q}$}, N_\ell^{\alpha_1} \otimes \cdots \otimes \Omega_\ell^i \otimes \cdots \otimes N^{\alpha_d}_\ell) = 0.
    \]
    Thus, if we consider the $\delta$-functor of definition \ref{def:cup}, the connecting morphisms induced by this short exact sequence are isomorphisms. Thus, we only need to prove the lemma for $\alpha = (0,...,0)$. Since $N^0_\ell = S^\ell_q$ is injective, we only need to see what happens in degree $0$. In this case, since as a $\field$ vector space
    \[
        \begin{array}{rcll}
            \Hom_{\quantumfunctor{}}(\bigotimes^{d(1)_q}, (S^\ell_q)^{\otimes d}) & \cong & \bigotimes^{d(1)_q}(d)_{(\ell,...,\ell)} & \mbox{by proposition \ref{property of symmetric power},} \\
            & = & \bigotimes^{d}(d)_{(1,...,1)} & \mbox{by proposition \ref{property of quantum Frobenius twist},} \\
            & \cong  & \Hom_{\polyfunctor{}_1}(\bigotimes^{d},\bigotimes^{d}) & \mbox{by proposition \ref{property of symmetric power},} \\
        \end{array}
    \]
    the injective map 
    \[
        \begin{array}{rcl}
            \Hom_{\quantumfunctor{}}(I^{(1)_q},N_0^\ell)^{\otimes d} \otimes \field \symgroup{d} & \to & \Hom_{\quantumfunctor{}}(\bigotimes^{d(1)_q}, (N_0^\ell)^{\otimes d}) \\
            \varphi \otimes \cdots \otimes \varphi \otimes \sigma & \mapsto & \varphi^{\otimes d} \circ \sigma
        \end{array}
    \]
     where $\varphi$ is the injective map of proposition \ref{natural embedding}, is an isomorphism (recall that $\Hom_{\quantumfunctor{}}(I^{(1)_q},S^\ell_q) = \field \varphi$). This proves the lemma.
\end{proof}

We can now compute $\Ext^*_{\quantumfunctor{}}(\bigotimes^{d(1)_q},\bigotimes^{d(1)_q})$.
\begin{theorem}\label{theo:ext(tenseur,tenseur)}
    The map
    \[
        \begin{array}{rcl}
            \Ext^*_{\quantumfunctor{}}(I^{(1)_q},I^{(1)_q})^{\otimes d} \otimes \field \symgroup{d} &  \to & \Ext^*_{\quantumfunctor{}}(\bigotimes^{d(1)_q},\bigotimes^{d(1)_q}) \\
            c_1 \otimes \cdots \otimes c_d \otimes \sigma & \mapsto & (c_1 \cup \cdots \cup c_d) \cdot \sigma 
        \end{array}
    \]
    is an isomorphism. 
\end{theorem}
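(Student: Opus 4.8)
The plan is to recognise the map of the statement as the special case $\Psi(I^{(1)_q},\dots,I^{(1)_q})$ of the family of maps $\Psi(F_1,\dots,F_d)$ introduced above (using $(I^{(1)_q})^{\otimes d}=\bigotimes^{d(1)_q}$ from Proposition~\ref{property of quantum Frobenius twist}), and to reduce the theorem to Lemma~\ref{lem:ext(tenseur,kernel)} by peeling off a finite coresolution. The relevant coresolution is the exact sequence
\[
0\to I^{(1)_q}\to N^0_\ell\to N^1_\ell\to\cdots\to N^{\ell-1}_\ell\to 0,
\]
which exists because $H^\bullet(N^\bullet_\ell)$ is $I^{(1)_q}$ concentrated in cohomological degree $0$ (as recalled before Lemma~\ref{lem:generalized pirashvili}, this being the special case $d=1$ of Proposition~\ref{Homologie noyau de Koszul}), and because $N^i_\ell=0$ for $i\ge\ell$. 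The structural input is the fact recorded after the definition of $\Psi$: fixing all but one of the arguments, $\Psi(F_1,\dots,F_d)$ is a morphism of $\delta$-functors valued in right $\field\symgroup{d}$-modules. Its source $(F_1,\dots,F_d)\mapsto\bigl(\bigotimes_j\Ext^*_{\quantumfunctor{}}(I^{(1)_q},F_j)\bigr)\otimes\field\symgroup{d}$ and its target $(F_1,\dots,F_d)\mapsto\Ext^*_{\quantumfunctor{}}(\bigotimes^{d(1)_q},F_1\otimes\cdots\otimes F_d)$ are $\delta$-functors in each variable, since tensoring with a $\field$-vector space and tensoring two functors over $\field$ are exact, and $\Psi$ commutes with the connecting homomorphisms in each of the $d$ variables.

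Next I would prove, by induction on the number $m$ of indices $j$ with $G_j=I^{(1)_q}$, the claim that $\Psi(G_1,\dots,G_d)$ is an isomorphism whenever each $G_j$ lies in $\{I^{(1)_q},N^0_\ell,\dots,N^{\ell-1}_\ell\}$. The base case $m=0$ is precisely Lemma~\ref{lem:ext(tenseur,kernel)}. For the inductive step, pick an index $j_0$ with $G_{j_0}=I^{(1)_q}$; by the inductive hypothesis, $\Psi$ is an isomorphism when $G_{j_0}$ is replaced by any $N^i_\ell$ with $0\le i\le\ell-1$, the other entries unchanged. Split the coresolution into short exact sequences $0\to W^i\to N^i_\ell\to W^{i+1}\to 0$ with $W^0=I^{(1)_q}$ and $W^{\ell-1}=N^{\ell-1}_\ell$, and place $W^i$ in the $j_0$-th slot. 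A downward induction on $i$ from $\ell-1$ to $0$, applying the five lemma to the long exact sequences of the source and target $\delta$-functors in the $j_0$-th variable, shows that $\Psi$ is an isomorphism with $W^i$ in that slot; the case $i=0$ gives $\Psi$ with $I^{(1)_q}$ back in slot $j_0$, completing the induction. Taking $G_j=I^{(1)_q}$ for all $j$ then yields the theorem.

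The only point requiring genuine care is the verification that $\Psi$ is a morphism of $\delta$-functors in each variable separately — that the cup product is compatible with the connecting maps and that this compatibility holds functorially in each of the $d$ arguments at once — but this is part of the formal properties of the cup product listed in Definition~\ref{def:cup} and the paragraph following the construction of $\Psi$, so beyond that the argument is pure bookkeeping layered on top of Lemma~\ref{lem:ext(tenseur,kernel)}. Because the coresolution $N^\bullet_\ell$ is finite, every step stays within finite inductions and no convergence or spectral sequence argument is needed; this is the mechanism by which the proof (and hence that of Theorem~\ref{thm-intro-1}) avoids spectral sequences, in contrast with the classical treatment in \cite{friedlander1997cohomology}.
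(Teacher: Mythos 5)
Your proof is correct, and it rests on the same two pillars as the paper's own argument --- Lemma \ref{lem:ext(tenseur,kernel)} and the coresolution $0\to I^{(1)_q}\to N^0_\ell\to\cdots\to N^{\ell-1}_\ell\to 0$ --- but it assembles them by a genuinely different mechanism. The paper treats all $d$ slots at once: it notes that the cup product gives a termwise isomorphism between the complex built from $\bigl(\bigotimes_j\Ext^*_{\quantumfunctor{}}(I^{(1)_q},N^{\bullet}_\ell)\bigr)\otimes\field\symgroup{d}$ and the complex $\Ext^*_{\quantumfunctor{}}(\bigotimes^{d(1)_q},(N^{\otimes d}_\ell)^{\bullet})$, then uses a bicomplex (hyper-Ext) collapse, based on $H^\bullet(N^\bullet_\ell)$ being $I^{(1)_q}$ concentrated in degree $0$, to identify the homology of each side with the two Ext-groups, and finally checks directly that the composite isomorphism is the stated map. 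You never form the tensor product of complexes: instead you run a slot-by-slot dévissage, splitting the coresolution into the short exact sequences $0\to W^i\to N^i_\ell\to W^{i+1}\to 0$ (the same splitting the paper uses in the proof of Theorem \ref{theo:Ext(fro,fro)}) and applying the five lemma to the ladder of long exact sequences, the commutativity of which is exactly the $\delta$-functor property of $\Psi$ in each variable recorded after its definition. Your route buys that the identification with the stated map is automatic (it is $\Psi$ throughout, so the paper's closing ``direct verification'' step disappears) at the cost of a double induction; the paper's route is more compact, handling all slots simultaneously via one bicomplex argument. Both stay within finite homological bookkeeping and avoid spectral sequences, consistent with the claim made in the introduction.
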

\begin{proof}
    As a consequence of lemma \ref{lem:ext(tenseur,kernel)}, the map given by the cup product
    \[
        \bigoplus_{s+t=*} \bigoplus_{\substack{\gamma \in \Omega(s,d) \\ \alpha \in \Omega(t,d)}} \left( \bigotimes_{j=1}^d \Ext^{\gamma_j}_{\quantumfunctor{}}(I^{(1)_q}, N^{\alpha_j}_\ell) \right ) \otimes \field \symgroup{d} \to \bigoplus_{s+t=*} \Ext_{\quantumfunctor{}}^{s}(\mbox{$\bigotimes^{d(1)_q}$}, (N^{\otimes d}_\ell)^t)
    \]
    is an isomorphism. Since moreover the homology of the complex $N^*_\ell$ is concentrated in degree $0$ and in this degree is $I^{(1)_q}$, a simple bicomplex argument shows that the left-hand side is isomorphic to
    \[
        \bigoplus_{\gamma \in \Omega(*,d)} \left( \bigotimes_{j=1}^d \Ext^{\gamma_j}_{\quantumfunctor{}}(I^{(1)_q}, I^{(1)_q}) \right ) \otimes \field \symgroup{d} = \Ext^*_{\quantumfunctor{}}(I^{(1)_q},I^{(1)_q})^{\otimes d}  \otimes \field \symgroup{d} \ .
    \]
    Similarly, the right-hand side is isomorphic to $\Ext^*_{\quantumfunctor{}}(\bigotimes^{d(1)_q},\bigotimes^{d(1)_q})$. A direct verification then shows that the composition of the isomorphisms is equal to the desired map.
\end{proof}
We can deduce from this theorem the algebra structure of $\Ext^*_{\quantumfunctor{}}(\bigotimes^{d(1)_q},\bigotimes^{d(1)_q})$.
\begin{corollaire}\label{cor:product}
    For any composition $\alpha$ in $d$ parts, we define
    \[
        e^{\alpha} = e^{\alpha_1} \cup \cdots \cup e^{\alpha_d} \in \Ext_{\quantumfunctor{}}^{2|\alpha|}(\mbox{$\bigotimes^{d(1)_q}$},\mbox{$\bigotimes^{d(1)_q}$})
    \]
    using the cup product. 
    For any composition $\alpha,\beta$ in $d$ parts and any $\sigma,\omega \in \symgroup{d}$, we have
    \[
        (e^{\alpha} \cdot \sigma) \cdot (e^{\beta} \cdot \omega) = e^{\alpha+\sigma(\beta)} \cdot \sigma \omega,
    \]
    where $\sigma(\beta) = (\beta_{\sigma^{-1}(1)},...,\beta_{\sigma^{-1}(d)})$.
\end{corollaire}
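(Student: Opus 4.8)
The strategy is to reduce the identity to three formal ingredients: associativity of the Yoneda product, a commutation rule moving a permutation past a cup product of \emph{even-degree} classes, and the ring structure $\Ext^*_{\quantumfunctor{}}(I^{(1)_q},I^{(1)_q})\simeq\field[e]/\langle e^\ell\rangle$ of Theorem~\ref{theo:Ext(fro,fro)}.

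First, by associativity of the Yoneda product one has $(e^\alpha\cdot\sigma)\cdot(e^\beta\cdot\omega)=e^\alpha\cdot(\sigma\cdot e^\beta)\cdot\omega$, so the whole point is to rewrite $\sigma\cdot e^\beta$. I would prove the following commutation rule: for homogeneous classes $c_1,\dots,c_d$ of even Ext-degree in $\Ext^*_{\quantumfunctor{}}(I^{(1)_q},I^{(1)_q})$ and any $\sigma\in\symgroup{d}$,
\[
    \sigma\cdot(c_1\cup\cdots\cup c_d)=(c_{\sigma^{-1}(1)}\cup\cdots\cup c_{\sigma^{-1}(d)})\cdot\sigma
    \qquad(\star)
\]
in $\Ext^*_{\quantumfunctor{}}(\bigotimes^{d(1)_q},\bigotimes^{d(1)_q})$. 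Applying $(\star)$ with $c_j=e^{\beta_j}$, which lies in even degree $2\beta_j$, and using that $e^{\beta_{\sigma^{-1}(1)}}\cup\cdots\cup e^{\beta_{\sigma^{-1}(d)}}=e^{\sigma(\beta)}$ by the very definition of $\sigma(\beta)$, gives $\sigma\cdot e^\beta=e^{\sigma(\beta)}\cdot\sigma$.

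To establish $(\star)$ I would reduce to the Coxeter generators $s_i=(i,i+1)$: both sides transform multiplicatively under $\sigma$ in the $\field\symgroup{d}$-slot, so $(\star)$ for $\sigma$ and for $\tau$ implies it for $\sigma\tau$ after reindexing the $c_j$, and the even-degree hypothesis guarantees that no sign is introduced along the way. For a single $s_i$, the key input is that under the identification $\Hom_{\quantumfunctor{}}(\bigotimes^{d(1)_q},\bigotimes^{d(1)_q})\simeq\field\symgroup{d}$ the element $s_i$ is the switch of the $i$-th and $(i+1)$-th tensor factors — this is exactly Proposition~\ref{property of quantum Frobenius twist}, which says that braidings of the form $R_{F,G^{(1)_q}}$ are switch maps. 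Then $(\star)$ for $s_i$ follows from the naturality of the cup product as a morphism of $\delta$-functors (Definition~\ref{def:cup}): on a tensor product of injective coresolutions the class $c_i\cup c_{i+1}$ is represented by a tensor product of cocycles, postcomposing with the switch transposes these two cocycles with a Koszul sign $(-1)^{(\deg c_i)(\deg c_{i+1})}$, which equals $+1$ since the degrees are even.

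Finally, combining the above with the fact that on the $\field\symgroup{d}$-part the Yoneda product is just the group-algebra product $\sigma\cdot\omega=\sigma\omega$, we are left to identify $e^\alpha\cdot e^{\gamma}$ with $e^{\alpha+\gamma}$ for compositions $\alpha,\gamma$ in $d$ parts. This follows from the interchange law between the cup and Yoneda products, $(c_1\cup\cdots\cup c_d)\cdot(c_1'\cup\cdots\cup c_d')=(c_1\cdot c_1')\cup\cdots\cup(c_d\cdot c_d')$, which is deduced from the properties listed in Definition~\ref{def:cup} by a standard dimension-shift argument and is again sign-free here since all $e^{\alpha_j},e^{\gamma_j}$ sit in even degree, together with the relation $e^{\alpha_j}\cdot e^{\gamma_j}=e^{\alpha_j+\gamma_j}$ in $\field[e]/\langle e^\ell\rangle$ (Theorem~\ref{theo:Ext(fro,fro)}), with the convention $e^k=0$ for $k\ge\ell$; the stated formula is consistent with this convention when some part of $\alpha+\sigma(\beta)$ exceeds $\ell-1$. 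Putting the pieces together yields $(e^\alpha\cdot\sigma)\cdot(e^\beta\cdot\omega)=e^{\alpha+\sigma(\beta)}\cdot\sigma\omega$. I expect the only genuinely delicate point to be the justification of $(\star)$: matching the symmetric group action (through the Frobenius identification and the switch-braiding of Proposition~\ref{property of quantum Frobenius twist}) with the cochain-level model of the cup product and checking that the only signs that could appear are Koszul signs in even degree; the remaining steps are purely formal.
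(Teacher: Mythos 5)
Your proposal is correct and follows essentially the same route as the paper: the paper also reduces to basic transpositions $\tau_i$, lifts $\tau_i$ to a chain map on a tensor power of a resolution via the braiding (with the Koszul sign, trivial here since the classes $e^{\beta_j}$ have even degree), and uses the switch-map property of $R$ on Frobenius twists from proposition \ref{property of quantum Frobenius twist} to get $\tau_i\cdot e^{\beta}=e^{\tau_i(\beta)}\cdot\tau_i$. The only cosmetic differences are your use of injective coresolutions instead of projective resolutions and your explicit spelling-out of the cup/Yoneda interchange law and the relation $e^{\alpha_j}\cdot e^{\gamma_j}=e^{\alpha_j+\gamma_j}$ in $E_1$, which the paper leaves implicit in ``the result then follows.''
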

\begin{proof}
    It suffices to prove the result for $\sigma = \tau_i = (i,i+1)$ a basic transposition. Let $P_*$ be a projective resolution of $I^{(1)_q}$. Then $P^{\otimes d}$ is a projective resolution of $\bigotimes^{d(1)_q}$. We extend $\tau_i : \bigotimes^{d(1)_q} \to \bigotimes^{d(1)_q}$ to a chain map $\underline{\tau_i} : P^{\otimes d} \to P^{\otimes d}$ by letting its restriction to $P_{s_1} \otimes \cdots \otimes P_{s_d}$ be
    \[
         (-1)^{s_i s_{i+1}}(1^{\otimes i-1} \otimes R \otimes 1^{\otimes d-i-1}) : P_{s_1} \otimes \cdots \otimes P_{s_d} \to P_{s_1} \otimes \cdots \otimes P_{s_{i-1}} \otimes P_{s_{i+1}} \otimes P_{s_i} \otimes P_{s_{i+2}} \otimes \cdots \otimes P_{s_d}. 
    \]
    Now, if $f_j : P_{s_j} \to I^{(1)_q}$ are cocycles for $j=1,...,d$, then we have
    \begin{align*}
        \tau_i(f_1 \otimes \cdots \otimes f_d)\underline{\tau_i}
        & = (-1)^{s_i s_{i+1}} f_1 \otimes \cdots \otimes f_{i-1} \otimes  (R \circ (f_i \otimes f_{i+1}) \circ R) \otimes f_{i+2} \otimes \cdots f_d \\
        & = (-1)^{s_i s_{i+1}} f_1 \otimes \cdots \otimes f_{i-1} \otimes f_{i+1} \otimes f_i \otimes f_{i+2} \otimes \cdots f_d.
    \end{align*}
    In particular, $\tau_i \cdot e^{\beta}  = e^{\tau_i(\beta)} \cdot \tau_i$. The result then follows.
\end{proof}
Note that since $e^\ell=0$, $e^{\alpha} = 0$ whenever $\alpha$ has a parts $\geq \ell$. 

Until now, no assumption was made on the characteristic of $\field$, but for the rest of this section, we will need $\field \symgroup{d}$ to be semisimple, and hence for $d!$ to be invertible in $\field$. With this assumption, every strict polynomial functors can be written in a certain form, given in the following lemma. 
\begin{lemma}[Schur-Weyl duality]\label{lem:astensor}
    Let $F \in \polyfunctor{d}_1$ be an homogeneous polynomial functor of degree $d$. Assume that $\field$ has characteristic zero or of characteristic $p>d$. Then there exist a left $\field \symgroup{d}$-module $M$ such that $F \cong \bigotimes^d \otimes_{\field \symgroup{d}} M$.
\end{lemma}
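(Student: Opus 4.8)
The plan is to identify $\polyfunctor{d}_1$ with the module category of $\field\symgroup{d}$ by means of the tensor power functor $\bigotimes^d$, using the double centralizer theorem together with Morita theory; everything we need is available because the hypothesis forces $d!$ to be invertible in $\field$. Indeed, $\mathrm{char}\,\field=0$ or $\mathrm{char}\,\field=p>d$ is equivalent to the invertibility of $d!$ in $\field$, so $\field\symgroup{d}$ is split semisimple by Maschke's theorem. Fixing $n\ge d$ and using the equivalence $\polyfunctor{d}_1\simeq S_1(n,n;d)\text{-mod}$ (the case $q=1$ of \cite[Theorem 4.7]{hong2017quantum}) together with the identification $S_1(n,n;d)=\End_{\field\symgroup{d}}(V_n^{\otimes d})$, one sees that the Schur algebra $S_1(n,n;d)$ is the endomorphism ring of a module over a semisimple algebra, hence itself semisimple; therefore $\polyfunctor{d}_1$ is a semisimple abelian category, and in particular all of its objects are projective.

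Next I would check that $\bigotimes^d$ is a finitely generated projective generator of $\polyfunctor{d}_1$ whose endomorphism algebra is $\field\symgroup{d}$. Projectivity is immediate from the semisimplicity just established (alternatively, and in arbitrary characteristic, $\bigotimes^d$ is a direct summand of a projective divided-power functor). That $\bigotimes^d$ is a generator follows from classical Schur--Weyl duality for strict polynomial functors: under the commuting actions of $\polyfunctor{d}_1$ (functoriality) and $\field\symgroup{d}$ (place permutations) one has a bimodule decomposition $\bigotimes^d\cong\bigoplus_{\lambda\vdash d}L_\lambda\otimes S^\lambda$, where the $L_\lambda$ are the simple strict polynomial functors of degree $d$ and the $S^\lambda$ the simple $\field\symgroup{d}$-modules; since each $\dim_\field S^\lambda\ge 1$, every simple object of $\polyfunctor{d}_1$ occurs as a direct summand of $\bigotimes^d$. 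Finally, $\End_{\polyfunctor{}_1}(\bigotimes^d)=\field\symgroup{d}$ was recalled just before the statement.

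It then remains to invoke Morita theory: a finitely generated projective generator $P$ of an abelian category $\mathcal{C}$ with $A=\End_{\mathcal{C}}(P)$ yields an equivalence
\[
    \Hom_{\polyfunctor{d}_1}(\bigotimes^d,-)\;:\;\polyfunctor{d}_1\;\xrightarrow{\ \sim\ }\;\field\symgroup{d}\text{-mod}\;,
\]
with quasi-inverse $M\mapsto\bigotimes^d\otimes_{\field\symgroup{d}}M$ (here one uses the anti-automorphism $\sigma\mapsto\sigma^{-1}$ of $\field\symgroup{d}$ to pass freely between left and right $\field\symgroup{d}$-modules, so that $\bigotimes^d\otimes_{\field\symgroup{d}}M$ makes sense for a left module $M$). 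Applying this to the given homogeneous $F$ of degree $d$ and setting $M:=\Hom_{\polyfunctor{d}_1}(\bigotimes^d,F)$, the counit of the adjunction is an isomorphism $\bigotimes^d\otimes_{\field\symgroup{d}}M\xrightarrow{\ \sim\ }F$, which is the claim. One may also argue directly from semisimplicity, bypassing Morita's theorem: write $F\cong\bigoplus_\lambda L_\lambda^{\oplus m_\lambda}$, set $M:=\bigoplus_\lambda(S^\lambda)^{\oplus m_\lambda}$, and use $\field\symgroup{d}\cong\prod_{\lambda\vdash d}\End_\field(S^\lambda)$ to get $(L_\lambda\otimes S^\lambda)\otimes_{\field\symgroup{d}}S^\mu\cong\delta_{\lambda\mu}L_\lambda$, whence $\bigotimes^d\otimes_{\field\symgroup{d}}M\cong F$.

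I expect the only point requiring genuine care — bookkeeping rather than a real difficulty — to be the matching of the various left/right $\field\symgroup{d}$-module conventions: one must verify that the place-permutation action of $\symgroup{d}$ on $\bigotimes^d$ underlying the Schur--Weyl decomposition is the same action that induces the identification $\End_{\polyfunctor{}_1}(\bigotimes^d)=\field\symgroup{d}$, so that $\bigotimes^d$ is a bona fide bimodule compatible with both sides of the Morita equivalence. Granting this, the statement is a formal consequence of the invertibility of $d!$ in $\field$.
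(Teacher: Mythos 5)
Your argument is correct, and it is a complete proof where the paper offers none: the paper simply refers to Green \cite[Chapter 6]{green2006polynomial}, whose treatment goes through the Schur functor, i.e.\ truncation by the weight idempotent $e=\xi_{(1,\dots,1)}$, using $eS_1(n,n;d)e\cong\field\symgroup{d}$ and $S_1(n,n;d)e\cong V_n^{\otimes d}$; in the semisimple situation this idempotent equivalence is exactly the Morita equivalence you set up with $\bigotimes^d$ as a projective generator, so the two routes are materially the same, yours being self-contained inside $\polyfunctor{d}_1$ and the paper's being outsourced to the Schur-algebra literature. Each step you use is sound: $d!$ invertible gives Maschke, $\End_A(M)$ of a module over a semisimple algebra is semisimple, so $S_1(n,n;d)$ and hence $\polyfunctor{d}_1$ are semisimple, and the counit argument then yields $F\cong\bigotimes^d\otimes_{\field\symgroup{d}}\Hom_{\polyfunctor{d}_1}(\bigotimes^d,F)$. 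Two small remarks. First, invoking ``classical Schur--Weyl duality'' for the generator step is not circular (the lemma being proved is a functor-category reformulation, while you only use the classical bimodule decomposition of $V_n^{\otimes d}$), but you could avoid even that appeal: for $n\ge d$ the weight space $(V_n^{\otimes d})_{(1,\dots,1)}$ is a copy of the regular representation of $\symgroup{d}$, so every simple $\field\symgroup{d}$-module occurs in $V_n^{\otimes d}$, which already forces every simple $S_1(n,n;d)$-module (a multiplicity space in the isotypic decomposition) to occur as a summand. Second, your left/right bookkeeping point is the genuinely necessary check, and your resolution via the antiautomorphism $\sigma\mapsto\sigma^{-1}$ (together with self-duality of the simple $\symgroup{d}$-modules, if you use the direct semisimple computation) is the standard and correct way to do it; note also that $\bigotimes^d\cong\Gamma^{(1,\dots,1)}$ is in fact projective in $\polyfunctor{d}_1$ in any characteristic, so your parenthetical aside is fine.
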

A proof of this lemma can be found for the equivalent framework of modules over Schur algebras in \cite[Chapter 6]{green2006polynomial}. The next definition introduce the notation $G^*(E_1 \otimes I)$ used in \ref{thm-intro-1}.

\begin{definition}\label{def:G(V*I)}
    Let $M$ be a left $\field \symgroup{d}$-module $M$. Set $G$ to be the homogeneous strict polynomial functor $G = \bigotimes^d \otimes_{\field \symgroup{d}} M$. Let $V^*$ be a finite dimensional graded $\field$-vector space. Then, for $t \in \integer$, define
    \[
        G^t(V^* \otimes I) = ((V^{\otimes d})^t \otimes \mbox{$\bigotimes^d$}) \otimes _{\field
        \symgroup{d}} M
    \]
    where $\field \symgroup{d}$ acts diagonally on $(V^{\otimes d})^t \otimes \bigotimes^d$ without Koszul signs. This is a homogeneous strict polynomial functor of degree $d$.
\end{definition}

\begin{remark}
    This definition is a special case of the definition in \cite[section 2.5]{touze2012troesch}.
\end{remark}

We can now state and prove our main theorem.
\begin{theorem}\label{theo:Ext(twisted,twisted)}
    Assume that $\field$ is a field of characteristic zero or of characteristic $p>d$. Let $F,G \in \polyfunctor{d}_1$ be homogeneous strict polynomial functors of degree $d$. Then 
    \[
        \Ext^*_{\quantumfunctor{}}(F^{(1)_q},G^{(1)_q}) \cong  \Hom_{\polyfunctor{}_1}(F,G^*(E_1 \otimes I)).
    \]
    where $E_1 = \Ext^*_{\quantumfunctor{}}(I^{(1)_q},I^{(1)_q})$.
\end{theorem}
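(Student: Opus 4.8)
The plan is to reduce the statement, via Schur--Weyl duality (Lemma \ref{lem:astensor}), to the already-established computation of $\Ext^*_{\quantumfunctor{}}(\bigotimes^{d(1)_q},\bigotimes^{d(1)_q})$ in Theorem \ref{theo:ext(tenseur,tenseur)}, keeping track of the $\field\symgroup{d}$-bimodule structures. Write $F = \bigotimes^d\otimes_{\field\symgroup{d}} M_F$ and $G = \bigotimes^d\otimes_{\field\symgroup{d}} M_G$ for left $\field\symgroup{d}$-modules $M_F$, $M_G$; since $d!$ is invertible, $\field\symgroup{d}$ is semisimple, so these modules are projective and the functors $-\otimes_{\field\symgroup{d}}M_F$ are exact. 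Because the quantum Frobenius twist is exact and commutes with the braiding and tensor products (Proposition \ref{property of quantum Frobenius twist}), one has $F^{(1)_q} \cong \bigotimes^{d(1)_q}\otimes_{\field\symgroup{d}} M_F$ and likewise for $G$, with the subtlety that the $\symgroup{d}$-action on $\bigotimes^{d(1)_q}$ is through the braiding of $\quantumfunctor{}$ precomposed by Frobenius, which by Proposition \ref{property of quantum Frobenius twist} is just the plain switch action on the twisted factors (no $q$'s).

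First I would use exactness and semisimplicity to get
\[
    \Ext^*_{\quantumfunctor{}}(F^{(1)_q},G^{(1)_q}) \cong M_F^{\vee}\otimes_{\field\symgroup{d}}\Ext^*_{\quantumfunctor{}}(\mbox{$\bigotimes^{d(1)_q}$},\mbox{$\bigotimes^{d(1)_q}$})\otimes_{\field\symgroup{d}} M_G\;,
\]
where $M_F^\vee$ is the dual module (the left action on the first argument of $\Ext$ becomes a right action, handled using that $\field\symgroup{d}$ is a symmetric algebra so left and right modules are interchangeable). Then I substitute the isomorphism of Theorem \ref{theo:ext(tenseur,tenseur)},
\[
    \Ext^*_{\quantumfunctor{}}(\mbox{$\bigotimes^{d(1)_q}$},\mbox{$\bigotimes^{d(1)_q}$}) \cong E_1^{\otimes d}\otimes\field\symgroup{d}\;,
\]
as a bimodule, and here I must be careful: Corollary \ref{cor:product} tells me exactly how $\symgroup{d}$ acts on each side ($\sigma\cdot(e^\alpha\cdot\omega) = e^{\sigma(\alpha)}\cdot\sigma\omega$), i.e.\ the left factor $E_1^{\otimes d}$ is permuted and the $\field\symgroup{d}$ factor carries the regular bimodule structure. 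Tensoring $M_F^\vee\otimes_{\field\symgroup{d}}(E_1^{\otimes d}\otimes\field\symgroup{d})\otimes_{\field\symgroup{d}}M_G$, the right copy of $\field\symgroup{d}$ absorbs $M_G$ and the left $\otimes_{\field\symgroup{d}}$ against the permutation action of $\symgroup{d}$ on $E_1^{\otimes d}\otimes M_G$ is, by definition, precisely $((E_1^{\otimes d})\otimes\bigotimes^d)\otimes_{\field\symgroup{d}}M_G$ evaluated appropriately --- that is, $G^*(E_1\otimes I)$ in the sense of Definition \ref{def:G(V*I)} --- while the remaining $M_F^\vee\otimes_{\field\symgroup{d}}(-)$ is exactly $\Hom_{\polyfunctor{}_1}(F,-)$ by Schur--Weyl duality again (using $\Hom_{\polyfunctor{}_1}(\bigotimes^d\otimes_{\field\symgroup{d}}M_F, H) \cong \Hom_{\field\symgroup{d}}(M_F, H(d)_{(1^d)}) \cong M_F^\vee\otimes_{\field\symgroup{d}} H(d)_{(1^d)}$). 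Assembling gives the claimed isomorphism, and naturality in $F$ and $G$ is inherited from the naturality of the cup product and of all the identifications used.

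The main obstacle I anticipate is bookkeeping of the $\field\symgroup{d}$-module structures and the absence of Koszul signs: one has to check that the sign twist appearing in the chain map $\underline{\tau_i}$ in the proof of Corollary \ref{cor:product} is exactly compensated by the Koszul sign convention built into Definition \ref{def:G(V*I)}, so that the action on $E_1^{\otimes d}$ that emerges is the \emph{signless} permutation action matching that definition (this is why the definition stipulates ``without Koszul signs''); getting this sign matching right, together with correctly dualizing the module on the $F$-side, is the delicate part, whereas the homological input is entirely contained in the earlier theorems. A secondary point to verify carefully is that $G^*$ applied to the graded object $E_1\otimes I$ agrees with the bimodule-theoretic expression above for every cohomological degree $k$ simultaneously, i.e.\ that the grading on $E_1^{\otimes d}$ matches the internal grading recorded by $G^k(E_1\otimes I)$; this is immediate from how the cup product adds degrees but should be stated explicitly.
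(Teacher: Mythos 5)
Your proposal is correct and follows essentially the same route as the paper's proof: Schur--Weyl duality (lemma \ref{lem:astensor}) to write $F$ and $G$ as $\bigotimes^d\otimes_{\field\symgroup{d}}M$, exactness of $M^*\otimes_{\field\symgroup{d}}-$ and $-\otimes_{\field\symgroup{d}}N$ from semisimplicity, substitution of theorem \ref{theo:ext(tenseur,tenseur)}, and reassembly into $\Hom_{\polyfunctor{}_1}(F,G^*(E_1\otimes I))$ via definition \ref{def:G(V*I)}. The sign-matching worry you raise is in fact vacuous here, since $E_1$ is concentrated in even degrees, so the Koszul signs $(-1)^{s_is_{i+1}}$ appearing in corollary \ref{cor:product} are all equal to $1$ and the induced $\symgroup{d}$-action on $E_1^{\otimes d}$ is the plain permutation action, as required.
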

\begin{proof}
    Under our assumption on $\field$, we can apply lemma \ref{lem:astensor}, and hence there exist left $\field \symgroup{d}$-modules $M,N$ such that
    \[
        F \cong \mbox{$\bigotimes^d$} \otimes_{\field \symgroup{d}} M \quad \mbox{and} \quad G \cong \mbox{$\bigotimes^d$} \otimes_{\field \symgroup{d}} N.
    \]
    Now, since the Schur algebra acts on the $\bigotimes^d$ factors,
    \[
        F^{(1)_q} \cong \mbox{$\bigotimes^{d(1)_q}$} \otimes_{\field \symgroup{d}} M \quad \mbox{and} \quad G^{(1)_q} \cong \mbox{$\bigotimes^{d(1)_q}$} \otimes_{\field \symgroup{d}} N.
    \]
    Moreover, again because of our assumption, the functors $M^* \otimes_{\field \symgroup{d}} -$ and $- \otimes_{\field \symgroup{d}} N$ are exact, and hence :
    \begin{align*}
        \Ext^*_{\quantumfunctor{}}(F^{(1)_q},G^{(1)_q})
        & \cong M^* \otimes_{\field \symgroup{d}} \Ext^*_{\quantumfunctor{}}(\mbox{$\bigotimes^{d(1)_q}$},\mbox{$\bigotimes^{d(1)_q}$}) \otimes_{\field \symgroup{d}} N \\
        & \cong M^* \otimes_{\field \symgroup{d}} \left ( E_1^{\otimes d} \otimes \Hom_{\polyfunctor{}_1}(\mbox{$\bigotimes^d$},\mbox{$\bigotimes^d$}) \right ) \otimes_{\field \symgroup{d}} N \\
        & \cong M^* \otimes_{\field \symgroup{d}} \Hom_{\polyfunctor{}_1}(\mbox{$\bigotimes^d$},\mbox{$\bigotimes^d$}(E_1 \otimes I)) \otimes_{\field \symgroup{d}} N \\
        & \cong \Hom_{\polyfunctor{}_1}(F,G(E_1 \otimes I))
    \end{align*}
\end{proof}
\begin{example}
    Under the hypothesis of theorem \ref{theo:Ext(twisted,twisted)}, we have
    \[
    \Ext^*_{\quantumfunctor{}}(\Lambda^{d(1)_q},\Lambda^{d(1)_q}) \cong S^d(E_1)
    \]
    as a $\field$-vector space. To show it, first remark that $\Lambda^d \cong \field^{sign} \otimes_{\field \symgroup{d}} \bigotimes^d$, where $\field^{sign}$ is the sign representation. Thus,
    \[
        \Ext^*_{\quantumfunctor{}}(\Lambda^{d(1)_q},\Lambda^{d(1)_q}) \cong \field^{sign} \otimes_{\field \symgroup{d}} \left ( E_1^{\otimes d} \otimes \field \symgroup{d} \right ) \otimes_{\field \symgroup{d}} \field^{sign}
    \]
    It is then easy to show that 
    \[
        \field^{sign} \otimes_{\field \symgroup{d}} \left ( E_1^{\otimes d} \otimes \field \symgroup{d} \right ) \otimes_{\field \symgroup{d}} \field^{sign} \cong \field^{triv} \otimes_{\field \symgroup{d}} E_1^{\otimes d} \cong S^d(E_1).
    \]
    For similar reason,
    \begin{align*}
        \Ext^*_{\quantumfunctor{}}(S^{d(1)_q},S^{d(1)_q}) & \cong S^d(E_1),\\
        \Ext^*_{\quantumfunctor{}}(S^{d(1)_q},\Lambda^{d(1)_q}) & \cong \Lambda^d(E_1),\\
        \Ext^*_{\quantumfunctor{}}(\Lambda^{d(1)_q},S^{d(1)_q}) & \cong \Lambda^d(E_1).
    \end{align*}
\end{example}

\section{Computations in small characteristic}\label{sec:Extsmall}

The proof of theorem \ref{theo:Ext(twisted,twisted)} uses in an essential way the semisimplicity of $\field \symgroup{d}$, which is true only under some hypothesis on the characteristic of $\field$. In this section, we make some steps towards conjecture \ref{conj} of the introduction. Let us first start by defining the iterated Frobenius.
\begin{definition}
    If $\field$ is a field of characteristic $p>0$, there is a functor
    \[
        \polyfunctor{d} \to \polyfunctor{dp}, \quad F \mapsto F^{(1)}
    \]
    called the classical Frobenius twist \cite[section 4]{friedlander1997cohomology}. By iterating the Frobenius twist functors, we obtain functors
    \[
        \polyfunctor{d} \to \polyfunctor{dp^r}, \quad F \mapsto F^{(r)}
    \]
    We can then apply the quantum Frobenius twist, and we obtain functors
    \[
        \polyfunctor{d} \to \quantumfunctor{dp^r\ell}, \quad F \mapsto F^{(r+1)_q}
    \]
\end{definition}
\begin{proposition}
If $F \in \polyfunctor{d}_1$, then for all $\alpha \in \Omega(d,n)$
we have 
        $F^{(r)_q}(n)_{p^{r-1}\ell\alpha} = F(n)_\alpha\;.$
\end{proposition}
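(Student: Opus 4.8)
The plan is to reduce the statement to two weight-space formulas that are already available: the third item of proposition \ref{property of quantum Frobenius twist}, which computes the weights of a quantum Frobenius twist, and its classical counterpart for the ordinary Frobenius twist of strict polynomial functors. Since $F^{(r)_q}=(F^{(r-1)})^{(1)_q}$ by definition, it suffices to track separately the effect on weights of the $(r-1)$ iterated classical twists and then of the single quantum twist, and to compose the two.

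First I would record the classical analogue of the third item of proposition \ref{property of quantum Frobenius twist}: for any $G\in\polyfunctor{e}_1$ and any $\mu\in\Omega(ep,n)$,
\[
    G^{(1)}(n)_\mu=\begin{cases} G(n)_\nu & \text{if }\mu=p\nu\text{ for some }\nu\in\Omega(e,n),\\ 0 & \text{otherwise.}\end{cases}
\]
This is standard (see \cite{friedlander1997cohomology}); it amounts to the fact that the classical Frobenius twist is pullback along a map $S_1(n,n;ep)\to S_1(n,n;e)$ sending $\xi_\mu$ to $\xi_\nu$ when $\mu=p\nu$ and to $0$ otherwise, combined with the definition $M_\mu=\xi_\mu M$ of weight spaces. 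A straightforward induction on $k$ then gives, for $\gamma\in\Omega(ep^k,n)$,
\[
    G^{(k)}(n)_\gamma=\begin{cases} G(n)_\delta & \text{if }\gamma=p^k\delta,\\ 0 & \text{otherwise.}\end{cases}
\]
Taking $G=F$, $e=d$, $k=r-1$, this says that $F^{(r-1)}(n)_\gamma=F(n)_\alpha$ when $\gamma=p^{r-1}\alpha$ and is $0$ otherwise.

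Next I would apply the third item of proposition \ref{property of quantum Frobenius twist} to the strict polynomial functor $F^{(r-1)}\in\polyfunctor{dp^{r-1}}_1$, which has degree $dp^{r-1}$: for $\beta\in\Omega(dp^{r-1}\ell,n)$,
\[
    F^{(r)_q}(n)_\beta=(F^{(r-1)})^{(1)_q}(n)_\beta=\begin{cases} F^{(r-1)}(n)_\gamma & \text{if }\beta=\ell\gamma,\\ 0 & \text{otherwise.}\end{cases}
\]
Combining the two displayed formulas, $F^{(r)_q}(n)_\beta$ can be nonzero only when $\beta=\ell\gamma$ with $\gamma=p^{r-1}\alpha$, that is $\beta=p^{r-1}\ell\alpha$, and in that case it equals $F^{(r-1)}(n)_{p^{r-1}\alpha}=F(n)_\alpha$. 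Specializing to $\beta=p^{r-1}\ell\alpha$ gives the asserted equality (and in fact also the vanishing of all other weight spaces of $F^{(r)_q}(n)$).

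There is no serious obstacle here; the only point needing care is the bookkeeping of degrees and the exact normalization $\mu=p\nu$ in the classical base case $k=1$, after which the induction and the composition are purely formal. Observe that for $r=1$ the statement is exactly the third item of proposition \ref{property of quantum Frobenius twist} and requires no hypothesis on $\field$, whereas for $r\geq 2$ we are working in characteristic $p>0$, as is needed for $F^{(r-1)}$ to be defined in the first place.
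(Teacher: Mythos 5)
Your proof is correct and follows essentially the same route as the paper: the paper's own proof simply invokes that the classical Frobenius twist multiplies weights by $p$ and the quantum twist by $\ell$ (the third item of proposition \ref{property of quantum Frobenius twist}), which is exactly the composition you carry out, only spelled out with the induction on the iterated classical twists made explicit.
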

\begin{proof}
    This follows from the facts that the classical Frobenius twist multiplies the weights by $p$, and the quantum Frobenius twist by $l$, see proposition \ref{property of quantum Frobenius twist}.
\end{proof}

Our first goal is to compute
\[
    E_r = \Ext_{\quantumfunctor{}}^*(I^{(r)_q},I^{(r)_q})
\]
for $r>0$. We have already done the case $r=1$ in theorem \ref{theo:Ext(fro,fro)}. We follow the method of \cite[section 4]{friedlander1997cohomology}. The latter relies on two ingredients : the Koszul and the De Rham complexes, and the Pirashvili vanishing lemma. The quantum version of the Koszul and De Rham complexes have been constructed in section 4, and Pirashvili's lemma is \ref{lem:generalized pirashvili}. Thus, we have already adapted all the necessary homological algebra for this task, and the three following results can be directly obtained by a nearly direct adaptation of the proof of proposition 4.4 to corollary 4.8.
\begin{theorem}\label{Ext}
    Let $\field$ be a field of characteristic $p>0$. Then
    \[
        \Ext_{\quantumfunctor{}}^s(I^{(r)_q},S^{p^{r-j}(j)_q}) = \left \{
        \begin{array}{rl}
             \field & \mbox{if } s \equiv 0 \modulo 2p^{r-j} \mbox{ and } s < 2p^{r-1}\ell \\
             0 & \mbox{otherwise} 
        \end{array}
        \right .
    \]
\end{theorem}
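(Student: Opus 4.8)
The plan is to adapt, essentially line by line, the argument of Friedlander--Suslin \cite[proofs of Proposition~4.4 through Corollary~4.8]{friedlander1997cohomology}: one replaces the classical de Rham and Koszul complexes by their quantum counterparts from Section~5, the classical Cartier isomorphism by Theorem~\ref{cartier} together with Proposition~\ref{Homologie noyau de Koszul}, and Pirashvili's vanishing lemma by Lemma~\ref{lem:generalized pirashvili}. The one structural novelty is that the quantum Frobenius cannot be iterated, so along the induction exactly one layer contributes a factor $\ell$ and every other layer contributes a factor $p$; this is precisely what produces the congruence $s \equiv 0 \bmod 2p^{r-j}$ together with the bound $s < 2p^{r-1}\ell$ in the statement.

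First I would isolate the only property of the source that enters: for every $r \ge 1$ the nonzero weights of $I^{(r)_q}$ are compositions supported on a single coordinate (by Proposition~\ref{property of quantum Frobenius twist}, they are the $p^{r-1}\ell\alpha$ with $\alpha$ a weight of $I$). Hence Lemma~\ref{lem:generalized pirashvili} with $k=1$ gives $\Ext^*_{\quantumfunctor{}}(I^{(r)_q}, A \otimes B) = 0$ whenever $A, B$ are homogeneous of positive degree; in particular $\Ext^*_{\quantumfunctor{}}(I^{(r)_q}, \Omega^t_{p^{r-1}\ell}) = 0$ for $0 < t < p^{r-1}\ell$, and likewise $\Ext^*_{\quantumfunctor{}}(I^{(r)_q}, (A \otimes B)^{(1)_q}) = 0$ for classical $A, B$ of positive degree. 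The proof then proceeds by induction on $r$; for $r=1$ only $j=1$ occurs, $S^{1(1)_q} = I^{(1)_q}$, and the statement is exactly Theorem~\ref{theo:Ext(fro,fro)}.

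For the inductive step I would first compute $\Ext^*_{\quantumfunctor{}}(I^{(r)_q}, N^t_{p^{r-1}\ell})$. Acyclicity of the quantum Koszul complex supplies short exact sequences $0 \to N^t_{p^{r-1}\ell} \to \Omega^t_{p^{r-1}\ell} \to N^{t-1}_{p^{r-1}\ell} \to 0$; combining the vanishing above with the facts that $N^0_{p^{r-1}\ell} = S^{p^{r-1}\ell}_q$ is injective and $\Hom_{\quantumfunctor{}}(I^{(r)_q}, S^{p^{r-1}\ell}_q) = \field$ (Proposition~\ref{property of symmetric power}), dimension shifting yields $\Ext^*_{\quantumfunctor{}}(I^{(r)_q}, N^t_{p^{r-1}\ell}) = \field$ concentrated in degree $t$, for $0 \le t < p^{r-1}\ell$. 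I would then feed this into the two hyper-$\Ext$ spectral sequences of the bounded complex $N^\bullet_{p^{r-1}\ell}$, whose cohomology is $N^{t(1)_q}_{p^{r-1}}$ by Proposition~\ref{Homologie noyau de Koszul}: the first collapses, since its $E_1$-page sits on the diagonal $s=t$ and leaves no room for differentials, and shows that the hypercohomology is $\field$ in each even total degree $< 2p^{r-1}\ell$; the second has $E_2^{s,t} = \Ext^s_{\quantumfunctor{}}(I^{(r)_q}, N^{t(1)_q}_{p^{r-1}})$, and a further round of dimension shifting through the Frobenius twist of the classical sequences $0 \to N^t_{p^{r-1}} \to \Omega^t_{p^{r-1}} \to N^{t-1}_{p^{r-1}} \to 0$ (again using Pirashvili) identifies this with $\Ext^{s-t}_{\quantumfunctor{}}(I^{(r)_q}, S^{p^{r-1}(1)_q})$ for $0 \le t < p^{r-1}$. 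This spectral sequence converges to a known answer and starts from shifted copies of the single unknown $\Ext^*_{\quantumfunctor{}}(I^{(r)_q}, S^{p^{r-1}(1)_q})$, so it must degenerate for dimension reasons and pins that group down to $\field$ in the degrees $\equiv 0 \bmod 2p^{r-1}$ below $2p^{r-1}\ell$ --- with the base point $\Ext^0_{\quantumfunctor{}}(I^{(r)_q}, S^{p^{r-1}(1)_q}) = \field$ supplied by the quantum Frobenius twist of the classical power map $I^{(r-1)} \hookrightarrow S^{p^{r-1}}$. This settles the case $j=1$; running the same argument with the de Rham complex $\Omega^\bullet_{p^{r-1}\ell}$ in place of $N^\bullet_{p^{r-1}\ell}$, using its Cartier cohomology $\Omega^{t(1)_q}_{p^{r-1}}$ (Theorem~\ref{cartier}), gives the $j=1$ targets $\Lambda^{p^{r-1}(1)_q}$ and, via duality, $\Gamma^{p^{r-1}(1)_q}$ as well; and iterating the whole construction, i.e.\ peeling the classical Frobenius layers off the target $S^{p^{r-j}(j)_q} = (S^{p^{r-j}(j-1)})^{(1)_q}$ one at a time, produces the cases $j \ge 2$, each peeled layer multiplying the relevant cohomological degree by $p$.

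Since the genuinely non-routine homological input --- the quantum Cartier isomorphism (Theorem~\ref{cartier}), the homology of the quantum Koszul kernel subcomplex (Proposition~\ref{Homologie noyau de Koszul}) and the generalized Pirashvili lemma (Lemma~\ref{lem:generalized pirashvili}) --- is already in place, I expect the main obstacle to be purely organisational: tracking how the single factor $\ell$ interleaves with the factors $p$ through the nested spectral sequences, and checking that at every stage the Pirashvili vanishing is strong enough to force degeneration. As in \cite{friedlander1997cohomology}, that bookkeeping, not any new idea, is where the work lies.
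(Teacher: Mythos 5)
Your overall architecture (quantum Koszul/de Rham complexes, the Cartier isomorphism, Proposition~\ref{Homologie noyau de Koszul}, the generalized Pirashvili lemma, dimension shifting through the acyclic Koszul complex, then the two hypercohomology spectral sequences of $N^\bullet$) is exactly the paper's, and your preliminary computations are fine: the weights of $I^{(r)_q}$ do force the vanishing you use, $\Ext^*_{\quantumfunctor{}}(I^{(r)_q},N^t_{p^{r-1}\ell})$ is indeed $\field$ concentrated in degree $t$, the first hyper spectral sequence is diagonal and gives the hyperext of $N^\bullet_{p^{r-1}\ell}$, and the identification $E_2^{s,t}\cong\Ext^{s-t}_{\quantumfunctor{}}(I^{(r)_q},S^{p^{r-1}(1)_q})$ via the twisted classical Koszul sequences is correct. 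The gap is the single sentence carrying the whole inductive step: ``this spectral sequence converges to a known answer and starts from shifted copies of the single unknown, so it must degenerate for dimension reasons.'' Dimension counting only gives a \emph{lower} bound: since $E_\infty$ is a subquotient of $E_2$, convergence yields $p^{r-1}\sum_k\dim A^k\ge p^{r-1}\ell$ where $A^*=\Ext^*_{\quantumfunctor{}}(I^{(r)_q},S^{p^{r-1}(1)_q})$; degeneration would require the reverse inequality (or odd-degree vanishing of $A^*$), which is precisely what is not yet known. Concretely, already in total degree $2$ the convergence data allows two branches: either $A^2=0$, or $A^2=\field$ together with an injective $d_2$ carrying the copy of $A^0$ at $E_2^{1,1}$ into $E_2^{3,0}=A^3$; nothing in your argument excludes the second branch, and such cancelling chains of spurious classes are not ruled out by the abutment alone.

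This is exactly where the paper (following Friedlander--Suslin) does its real work, and where your plan diverges from it: one first computes the hyperext of the twisted de Rham complex $\Omega^{\bullet(j)_q}_{p^{r-j}}$ (a two-row spectral sequence whose differentials vanish by lacunarity), shows the hyperext of $N^{\bullet(j)_q}_{p^{r-j}}$ vanishes in odd degrees, and then uses the morphism of second-page spectral sequences induced by $N\hookrightarrow\Omega$ --- an isomorphism on the bottom row since $N^0=\Omega^0$ --- to kill all differentials landing on $E^{s,0}$, so that $E_\infty^{s,0}=E_2^{s,0}=\Ext^s_{\quantumfunctor{}}(I^{(r)_q},S^{p^{r-j-1}(j+1)_q})$; injectivity of the edge morphism into the odd-vanishing hyperext then gives the odd-degree vanishing of the unknown group, and the two-row $\Omega$-spectral sequence exact sequences pin down the even part. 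So the de Rham complex is not an optional add-on for the $\Lambda$-targets, as your sketch suggests, but the essential comparison object that forces the degeneration you asserted. To repair your proof you would need to reinstate these comparison and edge-morphism arguments (or supply an independent upper bound or odd-degree vanishing for $A^*$); as written, the inductive step does not go through.
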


\begin{proof}[Sketch of proof]
This theorem, corresponding to \cite[theorem 4.5]{friedlander1997cohomology}, is proven by induction on $j$. We just give here the main steps. We drop the index `$_{\quantumfunctor{}}$', on $\Ext$ for the sake of readability.

{\bf The base case.} For $j=0$, $\Ext^*(I^{(r)_q},S^{p^{r-1}\ell})$ is easy to compute :  $S^{p^{r-1}\ell}$ is injective and the result thus follows from proposition \ref{property of symmetric power} (remember that the Frobenius twist just multiplies the weights).
\medskip

{\bf The induction step.} We assume the result for $j$.
\begin{itemize}
    \item The acyclicity of the Koszul complex and our quantum Pirashvili lemma \ref{lem:generalized pirashvili} yields isomorphisms
    \[
        \Ext^{*}(I^{(r)_q},S^{p^{r-j}(j)_q}) \cong \Ext^{*+i}(I^{(r)_q},N^{(j)_q}_{p^{r-j}}) \cong \Ext^{*+p^{r-j}-1}(I^{(r)_q},\Lambda^{p^{r-j}(j)_q})
    \]
    (when $j=0$, replace $p^{r-j}$ by $p^{r-1}l$) and hence we know the $\Ext$-group $\Ext^*(I^{(r)_q},N^{(j)_q}_{p^{r-j}})$ and $\Ext^*(I^{(r)_q},\Lambda^{p^{r-j}(j)_q})$.
    \item Using the knowledge of $\Ext^*(I^{(r)_q},\Lambda^{p^{r-j}(j)_q})$, $\Ext(I^{(r)_q},S^{p^{r-j}(j)_q})$, the quantum Pirashvili lemma, and a first hypercohomology spectral sequence, we compute the hyperext groups:
    \[
        \Ext(I^{(r)_q}, \Omega^{\bullet(j)_q}_{p^{r-j}}).
    \]
    (The spectral sequence has only two non zero rows, so the only differentials to study are from $\Ext(I^{(r)_q},\Lambda^{p^{r-j}(j)_q})$ to $\Ext(I^{(r)_q},S^{p^{r-j}(j)_q})$, and these differentials are zero by lacunarity.)
   
    \item A similar calculation shows that the hyperext groups $\Ext(I^{(r)_q},N^{\bullet(j)_q}_{p^{r-j}})$ are zero in odd degrees.
    \item We then consider the second hypercohomology spectral sequences, which by the computation of the homology of both $N$ and $\Omega$, are of the form
    \[
        E_2^{s,t} = \Ext^s(I^{(r)_q},N^{t(j+1)_q}) \quad \mbox{and} \quad \overline{E}_2^{s,t} = \Ext^s(I^{(r)_q},\Omega^{t(j+1)_q})
    \]
    This passage from $j$ to $j+1$ by taking the homology is the central point of our induction. Also note that for the same reason as before, $\overline{E}$ has only two non zero rows.
     \begin{enumerate}
    \item The inclusion $N \hookrightarrow \Omega$ defines a morphism of spectral sequences $E \to \overline{E}$ which in bidegree $s,0$ is an isomorphism since $N^0 = \Omega^0$. We use it to show that all the differentials going to $E^{s,0}$ are zero, and hence $E_\infty^{s,0} = E_2^{s,0} = \Ext(I^{(r)_q},S^{p^{r-j-1}(j+1)_q})$.
    \item The injectivity of the edge morphism $E^{s,0}_\infty \to \Ext(I^{(r)_q},N^{\bullet(j)_q}_{p^{r-j}})$, and the knowledge that $\Ext(I^{(r)_q},N^{\bullet(j)_q}_{p^{r-j}})$ is zero in odd degree then imply that $\Ext(I^{(r)_q},S^{p^{r-j-1}(j+1)_q})$ is zero in odd degree.
    \item Then with the fact that $\overline{E}$ has only two non zero row, we obtain exact sequences, and combining this with the knowledge of $\Ext(I^{(r)_q}, \Omega^{\bullet(j)_q}_{p^{r-j}})$, we obtain the result.
    \end{enumerate}
\end{itemize}
\end{proof}
Moreover, we also have \cite[(4.5.6)]{friedlander1997cohomology}, and hence also \cite[corollary 4.6, 4.7 and 4.8]{friedlander1997cohomology}, which corresponds to the following two propositions.

\begin{proposition}\label{presentation E_r}
    $E_r = \Ext_{\quantumfunctor{}}^*(I^{(r)_q},I^{(r)_q})$ is a graded algebra, generated by elements $e_{r,i}$, $i = 0,...,r-1$, where
    \[
        e_{r,i} \in \Ext^{2p^i}_{\quantumfunctor{}}(I^{(r)_q}, I^{(r)_q})
    \]
    Moreover, we have the following relations
    \[
        \begin{array}{rcll}
             e_{r,i}^p & = & 0 & \mbox{for } i = 0,...,r-2, \\
             e_{r,r-1}^\ell & = & 0, & \\
             e_{r,i} e_{r,j} & = & e_{r,j} e_{r,i} & \mbox{for } i,j = 0,...,r-2, \\
             e_{r,r-1} e_{r,i} & = & \lambda_{r,i} e_{r,i} e_{r,r-1} & \mbox{for } i = 0,...,r-2,
        \end{array}
    \]
    where the $\lambda_{r,i} \in \field$ are non-zero scalars.
\end{proposition}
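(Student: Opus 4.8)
The plan is to transport the classical argument behind \cite[(4.5.6) and corollary 4.6]{friedlander1997cohomology} to the quantum setting, using the quantum de Rham and Koszul complexes together with the computation of their homology (theorems \ref{cartier} and \ref{Homologie noyau de Koszul}) and the quantum Pirashvili lemma \ref{lem:generalized pirashvili} in place of their classical analogues. The first observation is that the additive structure of $E_r$ is already in hand: specializing theorem \ref{Ext} to $j=r$ (so that $S^{p^{r-j}(j)_q}=S^{1(r)_q}=I^{(r)_q}$) gives $\dim_\field E_r^{2k}=1$ for $0\le k<\ell p^{r-1}$ and $E_r^m=0$ for all other $m$; in particular $\dim_\field E_r=\ell p^{r-1}$. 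Hence it is enough to exhibit classes $e_{r,i}\in E_r^{2p^i}$, $0\le i\le r-1$, satisfying the listed relations and generating $E_r$ as an algebra. Indeed, once this is granted, $E_r$ is a quotient of the algebra $A$ presented by those relations; for any values of the $\lambda_{r,i}$, the relations let one rewrite every word as a normal-form monomial $e_{r,0}^{a_0}\cdots e_{r,r-1}^{a_{r-1}}$ with $a_i<p$ for $i<r-1$ and $a_{r-1}<\ell$, so $\dim_\field A\le\ell p^{r-1}$; comparing with $\dim_\field E_r$ forces $A\to E_r$ to be an isomorphism, and the normal-form monomials to be a basis, hence all non-zero.

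For the generators I would distinguish a classical and a quantum part. Because $I^{(r)_q}=(I^{(r-1)})^{(1)_q}$ and the quantum Frobenius twist $-^{(1)_q}\colon\polyfunctor{}_1\to\quantumfunctor{}$ is exact, it induces a graded ring homomorphism
\[
    \Phi\colon\Ext^*_{\polyfunctor{}_1}(I^{(r-1)},I^{(r-1)})\longrightarrow E_r .
\]
The source is the classical Yoneda algebra, which by \cite[theorem 4.5 and corollary 4.6]{friedlander1997cohomology} is commutative and generated by $r-1$ classes in degrees $2,2p,\dots,2p^{r-2}$, each with vanishing $p$-th power; I take $e_{r,0},\dots,e_{r,r-2}$ to be their images under $\Phi$. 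Since $\Phi$ is a ring map, they at once satisfy $e_{r,i}^p=0$ and $e_{r,i}e_{r,j}=e_{r,j}e_{r,i}$ for $i,j\le r-2$. The remaining generator $e_{r,r-1}$, in degree $2p^{r-1}$, is the genuinely quantum one: it is the class produced at the first step of the induction of theorem \ref{Ext}, where one passes through the quantum de Rham complex $\Omega^\bullet_{\ell p^{r-1}}$, in the same way $e\in E_1$ is produced from $\Omega^\bullet_\ell$ in the proof of theorem \ref{theo:Ext(fro,fro)}; the factor $p^{r-1}$ in the degree reflects that $I^{(r)_q}$ is the quantum twist of the $(r-1)$-fold classically twisted identity, whose weights are scaled by $p^{r-1}$. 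Its $\ell$-th power vanishes for the cheapest possible reason: $e_{r,r-1}^\ell$ sits in $E_r^{2\ell p^{r-1}}$, which is beyond the top non-zero degree $2(\ell p^{r-1}-1)$ of $E_r$.

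It remains to prove that $e_{r,0},\dots,e_{r,r-1}$ generate $E_r$ and to handle the relation $e_{r,r-1}e_{r,i}=\lambda_{r,i}e_{r,i}e_{r,r-1}$. Generation follows from running the induction of theorem \ref{Ext} multiplicatively, exactly as in \cite[(4.5.6)]{friedlander1997cohomology}: the hyper-Ext spectral sequences attached to the successive (twisted) de Rham and Koszul complexes are multiplicative, each passage from level $j$ to level $j+1$ is realized by Yoneda multiplication with one of the $e_{r,i}$, so these classes generate the associated graded of $E_r$ for the resulting filtration, hence $E_r$ itself; combined with the dimension count of the first paragraph this already yields $E_r\cong A$ and shows that $e_{r,r-1}e_{r,i}$ is a scalar multiple of $e_{r,i}e_{r,r-1}$, both lying in the one-dimensional piece $E_r^{2p^{r-1}+2p^i}$. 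The point the argument does \emph{not} settle for free is that $\lambda_{r,i}\ne 0$: one checks that $A$ has dimension $\ell p^{r-1}$ regardless of whether the $\lambda_{r,i}$ vanish, so the non-vanishing must be read off from the construction itself, by tracking the product $e_{r,r-1}e_{r,i}$ through the multiplicative spectral sequence and verifying that it is represented by a non-zero class — this is the delicate step, and it is also precisely where the quantum braiding obstructs a complete answer. The scalar $\lambda_{r,i}$ is produced by the powers of $q$ entering the product formula \eqref{product and coproduct} and the bookkeeping of the type carried out in corollary \ref{cor:product}, and pinning down its value appears to require a more explicit model than we have; we conjecture that $E_r$ is graded commutative, in which case $\lambda_{r,i}=1$.
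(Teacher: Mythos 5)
Your construction of the generators and most of the relations follows the paper's own route: $e_{r,0},\dots,e_{r,r-2}$ are obtained as images of the classical generators under the (algebra) morphism induced by the exact quantum twist, which immediately gives $e_{r,i}^p=0$ and the commutation among them; $e_{r,r-1}$ comes from the hyper-Ext spectral sequence of the quantum de Rham complex $\Omega_{p^{r-1}\ell}$; $e_{r,r-1}^\ell=0$ holds for degree reasons; and generation is obtained by running the induction of theorem \ref{Ext} multiplicatively (the analogue of \cite[(4.5.6), corollary 4.6]{friedlander1997cohomology}, i.e.\ proposition \ref{freeness of Ext}) together with the one-dimensionality of each $E_r^{2k}$. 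Up to this point your argument matches the paper's sketch.

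The genuine gap is the last relation: the proposition asserts $\lambda_{r,i}\neq 0$, and you explicitly leave this unproved, presenting it as the same obstruction that prevents the paper from determining the structure constants. That conflates two different things. What the paper leaves open is only the \emph{value} of $\lambda_{r,i}$ (conjecturally $1$, i.e.\ graded commutativity); the \emph{non-vanishing} is proved, and your normal-form/dimension count cannot reach it, since it only shows that the ordered monomials $e_{r,i}e_{r,r-1}$ are non-zero, not the reversed products $e_{r,r-1}e_{r,i}$. The missing idea is a duality trick: repeat the whole freeness/generation argument with the symmetric powers replaced by their duals, the divided powers, and with the de Rham and Koszul complexes replaced by their duals $\Omega^\#$ and $K^\#$. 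On that side the roles of the two orders of multiplication are interchanged, and the same reasoning that gives $e_{r,i}e_{r,r-1}\neq 0$ now gives $e_{r,r-1}e_{r,i}\neq 0$, hence $\lambda_{r,i}\neq 0$. (A minor further point: to even write $e_{r,r-1}e_{r,i}=\lambda_{r,i}\,e_{r,i}e_{r,r-1}$ you need to know first that $e_{r,i}e_{r,r-1}$ spans the one-dimensional piece $E_r^{2p^i+2p^{r-1}}$; this should be extracted from the freeness statement \emph{before} invoking the presentation $A$, since your quotient argument $A\twoheadrightarrow E_r$ presupposes that all listed relations already hold in $E_r$.)
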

When $r$ is clear from the context, we omit it in the indices of the generator. Note that in the classical case \cite[theorem 4.10]{friedlander1997cohomology} we know that the algebra $\Ext(I^{(r)},I^{(r)})$ is commutative. In our case, we think that it should also be true, but we were unable to prove it (to adapt the method of Friedlander and Suslin \cite[theorem 4.10]{friedlander1997cohomology}, one would need to twist again extensions between $I^{(r)_q}$ and $I^{(r)_q}$, and we do not know how to do that).

\begin{proposition}\label{freeness of Ext}
    Let $\phi : I^{(r-j)} \to S^{p^{r-j}} \in \Ext^0_{\polyfunctor{}}(I^{(r-j)},S^{p^{r-j}})$ be the natural embedding and let
    \[
        \phi^{(j)_q}  : I^{(r)_q} \to S^{p^{r-j}(j)_q} \in \Ext^0_{\quantumfunctor{}}(I^{(r)_q},S^{p^{r-j}(j)_q})
    \]
    Regard $\Ext^*_{\quantumfunctor{}}(I^{(r)_q},S^{p^{r-j}(j)_q})$ as a right $E_r$-module via the Yoneda product. Then it is a free module of the sub-algebra of $E_r$ generated by $e_{r-j}, e_{r-j+1},..., e_{r-1}$, spanned by $\phi^{(j)_q}$.
\end{proposition}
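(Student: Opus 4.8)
The plan is to reformulate the freeness as the bijectivity of a single graded linear map, to match Poincar\'e series using Theorem~\ref{Ext} and Proposition~\ref{presentation E_r}, and then to run an induction on $j$ built on the quantum de Rham complex and the quantum Pirashvili lemma, in close parallel with the passage from \cite[Proposition~4.4]{friedlander1997cohomology} to \cite[Corollary~4.8]{friedlander1997cohomology}. Write $A_j\subseteq E_r$ for the subalgebra generated by $e_{r-j},\dots,e_{r-1}$ and consider
\[
    \psi_j\colon A_j\longrightarrow\Ext^*_{\quantumfunctor{}}(I^{(r)_q},S^{p^{r-j}(j)_q}),\qquad a\longmapsto\phi^{(j)_q}\cdot a .
\]
The proposition is exactly the assertion that $\psi_j$ is an isomorphism. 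By Proposition~\ref{presentation E_r}, $A_j$ has $\field$-basis the monomials $e_{r-j}^{a_{r-j}}\cdots e_{r-1}^{a_{r-1}}$ with $0\le a_i<p$ for $i<r-1$ and $0\le a_{r-1}<\ell$; since $\deg e_{r,i}=2p^i$, uniqueness of base-$p$ expansions shows that $A_j$ is one-dimensional in each degree $2p^{r-j}k$ for $0\le k<p^{j-1}\ell$ and zero in all other degrees. By Theorem~\ref{Ext} the target has exactly the same Poincar\'e series. Hence $\psi_j$ is an isomorphism as soon as it is injective, and since every graded component of $\psi_j$ is a $\field$-linear map $\field\to\field$, it suffices to check that $\phi^{(j)_q}$ times the basis monomial of $A_j$ in degree $2p^{r-j}k$ is non-zero for each admissible $k$.

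We prove this by induction on $j$. For $j=0$ one has $A_0=\field$ while $S^{p^{r-1}\ell}_q$ is injective, so by Proposition~\ref{property of symmetric power} $\Ext^*_{\quantumfunctor{}}(I^{(r)_q},S^{p^{r-1}\ell}_q)=\Hom_{\quantumfunctor{}}(I^{(r)_q},S^{p^{r-1}\ell}_q)\cong I^{(r)_q}(1)_{(p^{r-1}\ell)}\cong\field$, spanned by $\phi^{(0)_q}$, and there is nothing to prove. For the step $j\to j+1$, introduce the classical de Rham complex of degree $p^{r-j}$ twisted $j$ times, $C^\bullet:=(\Omega^\bullet_{p^{r-j}})^{(j)_q}$ (for $j=0$, read this as the untwisted quantum de Rham complex $\Omega^\bullet_{p^{r-1}\ell}$ of section~4), together with its Koszul kernel subcomplex $M^\bullet:=N^{\bullet(j)_q}_{p^{r-j}}$. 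By the Cartier isomorphism (classical for $j\ge1$, Theorem~\ref{cartier} for $j=0$), Proposition~\ref{Homologie noyau de Koszul}, and exactness of the twists, $C^0=M^0=S^{p^{r-j}(j)_q}$, $H^t(C^\bullet)=(\Omega^t_{p^{r-j-1}})^{(j+1)_q}$, $H^t(M^\bullet)=N^{t(j+1)_q}_{p^{r-j-1}}$, with $H^0(C^\bullet)=H^0(M^\bullet)=S^{p^{r-j-1}(j+1)_q}$ and the inclusion $H^0(C^\bullet)\hookrightarrow C^0$ equal to the canonical embedding; in particular the induced map on $\Ext^0$ carries $\phi^{(j+1)_q}$ to $\phi^{(j)_q}$.

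Now we exploit that every weight of $I^{(r)_q}$ is a composition with a single non-zero part: by the quantum Pirashvili lemma~\ref{lem:generalized pirashvili}, $\Ext^*_{\quantumfunctor{}}(I^{(r)_q},-)$ vanishes on $C^t=(S^{p^{r-j}-t}\otimes\Lambda^t)^{(j)_q}$ for the intermediate values $1\le t\le p^{r-j}-1$ and on $H^t(C^\bullet)=(S^{p^{r-j-1}-t}\otimes\Lambda^t)^{(j+1)_q}$ for $1\le t\le p^{r-j-1}-1$, all of these being non-trivial tensor products of functors of degree $\ge1$. Hence both hypercohomology spectral sequences converging to $\Ext^*_{\quantumfunctor{}}(I^{(r)_q},C^\bullet)$ --- the column one $\overline E_1^{s,t}=\Ext^s(I^{(r)_q},C^t)$ and the row one $E_2^{s,t}=\Ext^s(I^{(r)_q},H^t(C^\bullet))$ --- have only two non-zero columns and collapse to long exact sequences relating $\Ext^*_{\quantumfunctor{}}(I^{(r)_q},C^\bullet)$ to $\Ext^*(I^{(r)_q},S^{p^{r-j}(j)_q})$ and $\Ext^*(I^{(r)_q},\Lambda^{p^{r-j}(j)_q})$ on one hand, and to $\Ext^*(I^{(r)_q},S^{p^{r-j-1}(j+1)_q})$ and $\Ext^*(I^{(r)_q},\Lambda^{p^{r-j-1}(j+1)_q})$ on the other, the Koszul acyclicity additionally relating the $\Lambda$-groups to the $S$-groups up to a degree shift, exactly as in the proof of Theorem~\ref{Ext}. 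All of these are sequences of right $E_r$-modules for the Yoneda product, and $M^\bullet\hookrightarrow C^\bullet$ induces an $E_r$-linear map of spectral sequences which is an isomorphism on the $t=0$ column. Feeding the already-known values of Theorem~\ref{Ext} into these sequences and tracing the module structure, one sees that the edge morphism $\Ext^*(I^{(r)_q},S^{p^{r-j-1}(j+1)_q})\hookrightarrow\Ext^*_{\quantumfunctor{}}(I^{(r)_q},C^\bullet)\to\Ext^*(I^{(r)_q},S^{p^{r-j}(j)_q})$ carries $\phi^{(j+1)_q}\cdot A_j$ isomorphically onto $\phi^{(j)_q}\cdot A_j$, while the complementary graded pieces of the source are obtained from $\phi^{(j+1)_q}\cdot A_j$ by Yoneda multiplication by a class identified with the new generator $e_{r,r-j-1}\in E_r^{2p^{r-j-1}}$. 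Since $A_{j+1}$ has monomial basis $\{\,e_{r-j-1}^{\,b}\cdot m\,\}$ with $m$ ranging over the basis of $A_j$ and $0\le b<p$ (resp.\ $b<\ell$ when $j=0$), the degree count then forces $\psi_{j+1}$ to be injective, hence bijective, completing the induction.

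The real work lies in this last step, and is entirely module-theoretic. One must verify that the two hypercohomology spectral sequences, their edge and connecting maps, and the comparison map induced by $M^\bullet\hookrightarrow C^\bullet$ are all morphisms of right $E_r$-modules for the Yoneda action, using the cup/Yoneda-product formalism of Definition~\ref{def:cup}; and, more delicately, one must identify the action of the new generator $e_{r,r-j-1}$ with the ``shift by one de Rham step'' connecting homomorphism attached to $C^\bullet$ --- the quantum analogue of the Friedlander--Suslin identity \cite[(4.5.6)]{friedlander1997cohomology}, which is where the multiplicative structure of the quantum de Rham complex (the $q$-Leibniz formula, Proposition~\ref{compatibility of the differential with product}) genuinely intervenes. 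Once this bookkeeping is in place the numerical comparisons of the first two paragraphs close the argument; everything else --- acyclicity of the Koszul complex, the Cartier isomorphism~\ref{cartier}, and the quantum Pirashvili vanishing --- is already available from sections~4 and~5.
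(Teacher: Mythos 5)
Your proposal follows essentially the same route as the paper's own (sketched) proof: the Friedlander--Suslin-style induction on $j$ using the twisted de Rham complex, its Koszul-kernel subcomplex, the quantum Pirashvili vanishing (lemma \ref{lem:generalized pirashvili}) and the two hypercohomology spectral sequences, with the Yoneda action of the new generator identified with the connecting differential --- the quantum analogue of \cite[(4.5.6)]{friedlander1997cohomology} --- which you, like the paper, flag rather than verify in detail. The only real difference is cosmetic: you make the Poincar\'e-series bookkeeping via proposition \ref{presentation E_r} and theorem \ref{Ext} explicit, which the paper leaves implicit, and this use of proposition \ref{presentation E_r} is legitimate (not circular) given its prior, independently sketched proof.
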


\begin{proof}[Sketch of proof of propositions \ref{presentation E_r} and \ref{freeness of Ext}]
To define the generators $e_{r,i}$, consider the second hypercohomology spectral sequence $\overline{E}$ corresponding to the complex $\Omega_{p^{r-1}\ell}$ :
\[
    \overline{E}_2^{s,t} = \Ext^s_{\quantumfunctor{}}(I^{(r)_q}, \Omega^{t(1)_q}_{p^{r-1}}) \implies \Ext^{t+s}_{\quantumfunctor{}}(I^{(r)_q}, \Omega_{p^{r-1}\ell}).
\]
By the quantum Pirashvili lemma, this spectral sequence has only two non-zero rows : $t=0$ and $t=p^{r-1}$, and hence has only one non-zero differential $d_{p^{r-1}+1}$, hence $\overline{E}_2 = \overline{E}_{p^{r-1}+1}$. Moreover, 
\[
    d_{p^{r-1}+1} : \Ext^{p^{r-1}-1}_{\quantumfunctor{}}(I^{(r)_q}, \Omega^{t(1)_q}_{p^{r-1}}) \cong \Ext^0_{\quantumfunctor{}}(I^{(r)_q},S^{p^{r-1}(1)_q}) \to \Ext^{2p^{r-1}}_{\quantumfunctor{}}(I^{(r)_q},S^{p^{r-1}(1)_q}).
\]
Then there is an unique element $e_{r,r-1}$ such that $\phi^{(1)_q} \cdot e_{r,r-1}$ (here, we use the Yoneda product). For $j=0,...,r-2$, the generator $e_{r,j}$ is obtained from the generator in degree $2p^j$ defined in \cite[section 4]{friedlander1997cohomology} via the twisting morphisms
\[
    \Ext^{2p^j}_{\polyfunctor{}}(I^{(r-1)},I^{(r-1)}) \to \Ext^{2p^j}_{\quantumfunctor{}}(I^{(r)_q},I^{(r)_q})
\]
which are algebra morphisms. Now, to prove that they are generators of $E_r$, we can prove a similar result to \cite[Corollary 4.6]{friedlander1997cohomology}, and then that the graded subspace of $E_r$ spanned by the products
\[
    (e_{r,0})^{\alpha_0} (e_{r,1})^{\alpha_1} \cdots (e_{r,r-1})^{\alpha_{r-1}} \qquad (*)
\]
with $0 \leq \alpha_0,...,\alpha_{r-2} < p$ and $0 \leq \alpha_{r-1} < \ell$, check the condition corresponding to the condition of \cite[Corollary 4.6]{friedlander1997cohomology}, and hence is equal to $E_r$. More generally, this method can be used to prove proposition \ref{freeness of Ext}. Now, we explain how to prove the relations. For the relation $e_{r,r-1}^\ell = 0$, note that $e_{r,r-1}^\ell \in E^{2p^{r-1}\ell}_r$, which is zero by theorem \ref{Ext}. The two relation $e_{r,i}^p = 0$ and $e_{r,i}e_{r,j} = e_{r,j}e_{r,i}$ are obtained by applying the twisting morphism to the corresponding relations in $\Ext^*_{\polyfunctor{}}(I^{(r-1)},I^{(r-1)})$. For the last relation, note that $e_{r,i} e_{r,r-1} \neq 0$, since it is the only product in $(*)$ of degree $2p^i + 2p^{r-1}$. Since $E_r^{2p^i + 2p^{r-1}}$ is one dimensional by theorem \ref{Ext}, it suffices to prove that $e_{r,r-1} e_{r,i} \neq 0$. To prove it, it is the same method which proves $e_{r,i} e_{r,r-1} \neq 0$, but replacing the symmetric power by their dual, the divided power, and using the dual Koszul and de Rham complex $K^\#$ and $\Omega^\#$.
\end{proof}

The following definitions are used to define the isomorphism of theorem \ref{thm-intro-2}.
\begin{definition}
    The functor $-^{(s)_q} : \polyfunctor{}_1 \to \quantumfunctor{}$ is exact, and hence, for any $F,G \in \polyfunctor{}_1$, it induces a map 
    \[
        \Ext_{\polyfunctor{}_1}^*(F,G) \to \Ext_{\quantumfunctor{}}^*(F^{(s)_q},G^{(s)_q}).
    \]
    The image of an extension $x \in \Ext_{\polyfunctor{}_1}^*(F,G)$ is denoted by $x^{(s)_q}$.
\end{definition}
\begin{definition}
    If $V = \bigoplus V^i$ is a graded vector space, then $V^{(r)}$ denotes the graded vector space
\[
    (V^{(r)})^k = \left \{
    \begin{array}{rl}
         V^i & \mbox{if } k = p^{r-1} i, \\
         0 & \mbox{otherwise.} 
    \end{array}
    \right .
\]
\end{definition}

We can now prove theorem \ref{thm-intro-2}. Its proof is a direct adaptation of the proof of Giordano \cite{giordano2023additive}. The only difference in his case is that it is known that the map corresponding to $\sigma$ below is, in fact, an algebra map. In our case, we do not know if it is true (we certainly think so), but we do not need it.
\begin{theorem}\label{final theorem}
    Let $\sigma : (\Ext^*_{\quantumfunctor{}}(I^{(s)_q},I^{(s)_q}))^{(r)} \to \Ext^*_{\quantumfunctor{}}(I^{(r+s)_q},I^{(r+s)_q})$ be the graded injective map defined by
    \[
        e_{s,0}^{\alpha_0} e_{s,1}^{\alpha_1} \cdots e_{s,s-1}^{\alpha_{s-1}} \mapsto e_{r+s,r}^{\alpha_0} e_{r+s,r+1}^{\alpha_1} \cdots e_{r+s,r+s-1}^{\alpha_{s-1}}
    \]
    Let $G \in \polyfunctor{p^r}$ be a strict polynomial functor. 

    \noindent
    Regard $\Ext^*_{\quantumfunctor{}}(I^{(r+s)_q},G^{(s)_q})$ as a right $\Ext^*_{\quantumfunctor{}}(I^{(r+s)_q},I^{(r+s)_q})$-module via the Yoneda product, and define the graded map
    \[
        \begin{array}{rcl}
             \Psi : \Ext^*_{\polyfunctor{}_1}(I^{(r)},G) \otimes (\Ext^*_{\quantumfunctor{}}(I^{(s)_q},I^{(s)_q}))^{(r)} & \to & \Ext^*_{\quantumfunctor{}}(I^{(r+s)_q},G^{(s)_q}) \\
             x \otimes e & \mapsto & x^{(s)_q} \cdot \sigma(e) 
        \end{array}
    \]
    Then $\Psi$ is an isomorphism.
\end{theorem}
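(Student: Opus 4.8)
\emph{Outline.} The plan is to adapt the Friedlander--Suslin / Giordano scheme: realize the right-hand side as the abutment of a hypercohomology spectral sequence attached to an injective coresolution of $G$, identify its pages by means of Theorem \ref{Ext}, Proposition \ref{freeness of Ext} and the generalized Pirashvili Lemma \ref{lem:generalized pirashvili}, and prove degeneration using only the \emph{freeness} of the relevant $\Ext$-modules over the subalgebra $\sigma(E_s)\subseteq E_{r+s}$ --- never the (unknown) multiplicative structure of $E_{r+s}$ and never the question of whether $\sigma$ is an algebra map. Here $E_s=\Ext^*_{\quantumfunctor{}}(I^{(s)_q},I^{(s)_q})$ and $E_{r+s}=\Ext^*_{\quantumfunctor{}}(I^{(r+s)_q},I^{(r+s)_q})$. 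Concretely, I would first choose an injective coresolution $0\to G\to J^0\to J^1\to\cdots$ in $\polyfunctor{p^r}_1$ with each $J^p$ a finite direct sum of functors $S^\mu$, $\mu$ a partition of $p^r$. Since $-^{(s)_q}$ is exact (Proposition \ref{property of quantum Frobenius twist}), $J^{\bullet(s)_q}$ is an exact coresolution of $G^{(s)_q}$, which produces a first-quadrant spectral sequence
\[
E_1^{p,q}=\Ext^q_{\quantumfunctor{}}(I^{(r+s)_q},J^{p(s)_q})\ \Longrightarrow\ \Ext^{p+q}_{\quantumfunctor{}}(I^{(r+s)_q},G^{(s)_q}),
\]
on which the Yoneda algebra $E_{r+s}$, hence its subalgebra $\sigma(E_s)$, acts in a way that commutes with every differential (this Yoneda action being realized at the chain level). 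On the other side, since $\Ext^{>0}_{\polyfunctor{}_1}(I^{(r)},J^p)=0$, the source of $\Psi$ is $H^*\!\big(\Hom_{\polyfunctor{}_1}(I^{(r)},J^\bullet)\otimes(E_s)^{(r)}\big)$.

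\emph{The $E_1$- and $E_2$-pages.} Because $I^{(r+s)_q}$ has weights with a single nonzero part (the Frobenius twists only rescale weights), Lemma \ref{lem:generalized pirashvili} --- and, at $q=1$, its classical specialization --- forces $\Ext^*_{\quantumfunctor{}}(I^{(r+s)_q},S^{\mu(s)_q})=0$ and $\Ext^*_{\polyfunctor{}_1}(I^{(r)},S^\mu)=0$ whenever $\mu$ has at least two nonzero parts; so only the one-block summands $S^{p^r}$ of the $J^p$ contribute. For $\mu=(p^r)$, Theorem \ref{Ext} computes $\Ext^*_{\quantumfunctor{}}(I^{(r+s)_q},S^{p^r(s)_q})$ and Proposition \ref{freeness of Ext} exhibits it as a free $\sigma(E_s)$-module of rank one, generated by $\phi^{(s)_q}$ in cohomological degree $0$; equivalently $e\mapsto\phi^{(s)_q}\cdot\sigma(e)$ is an isomorphism $(E_s)^{(r)}\xrightarrow{\ \sim\ }\Ext^*_{\quantumfunctor{}}(I^{(r+s)_q},S^{p^r(s)_q})$, which is exactly $\Psi$ for $G=S^{p^r}$ (and $\Psi$ is trivially an isomorphism, $0\to 0$, for the multi-block $S^\mu$). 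Consequently $E_1^{p,\bullet}$ is a free $\sigma(E_s)$-module whose generators lie in the row $q=0$; being $\sigma(E_s)$-linear, $d_1$ is determined by its restriction to that row, where --- using full faithfulness of $-^{(s)_q}$ --- it reproduces the differential of $\Hom_{\polyfunctor{}_1}(I^{(r)},J^\bullet)$. Hence $E_2^{p,\bullet}\cong\Ext^p_{\polyfunctor{}_1}(I^{(r)},G)\otimes\Ext^\bullet_{\quantumfunctor{}}(I^{(r+s)_q},S^{p^r(s)_q})$, again free over $\sigma(E_s)$ with generators concentrated in $q=0$.

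\emph{Degeneration and identification of the map.} This is the crux. I claim, by induction on $\nu\ge 2$, that $E_\nu^{p,\bullet}$ is free over $\sigma(E_s)$ with generators in $q=0$ and that $d_\nu=0$: indeed $d_\nu\colon E_\nu^{p,q}\to E_\nu^{p+\nu,q-\nu+1}$ is $\sigma(E_s)$-linear, hence determined by its values on those generators, and it sends them into $E_\nu^{p+\nu,1-\nu}=0$ (first quadrant), so $d_\nu=0$ and $E_{\nu+1}=E_\nu$. Thus the spectral sequence degenerates at $E_2$, and over a field the abutment filtration splits, giving $\Ext^k_{\quantumfunctor{}}(I^{(r+s)_q},G^{(s)_q})\cong\bigoplus_{p+q=k}\Ext^p_{\polyfunctor{}_1}(I^{(r)},G)\otimes\Ext^q_{\quantumfunctor{}}(I^{(r+s)_q},S^{p^r(s)_q})$. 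Replacing the last factor by $(E_s)^{(r)}$ through the rank-one freeness above, and tracking the bottom-row edge map --- which sends the class of a classical cocycle $x$ to $x^{(s)_q}$, after which $\sigma(E_s)$ acts by Yoneda --- one checks that the composite isomorphism is precisely $x\otimes e\mapsto x^{(s)_q}\cdot\sigma(e)$, i.e.\ the map $\Psi$.

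\emph{Main obstacle.} The delicate point is exactly Step~3, or rather arranging Steps~1--2 so that Step~3 works: one must know the spectral sequence is a spectral sequence of \emph{free} $\sigma(E_s)$-modules with generators in cohomological degree zero, which is what annihilates all higher differentials \emph{without} any information on how $\sigma(E_s)$ sits multiplicatively inside $E_{r+s}$. This rests on two things that need care: the chain-level compatibility of the Yoneda action of $E_{r+s}$ with the hypercohomology spectral sequence (so that all $d_\nu$ are genuinely $\sigma(E_s)$-linear), and Proposition \ref{freeness of Ext} together with the Pirashvili vanishing; a secondary nuisance, purely bookkeeping, is keeping track of the degree shifts introduced by the iterated Frobenius twists when matching the gradings on the two sides.
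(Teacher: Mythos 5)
Your proposal is correct in substance, but it takes a different route from the paper. The paper dispatches the theorem in a few lines: it first isolates an abstract dévissage statement (Proposition \ref{prop:formalisation}, a formalization of Giordano's argument, valid for any exact functor and any graded map $\sigma$, with no algebra-morphism hypothesis) which reduces the claim to the case of injective $G$; for $G=S^\mu$ with $\mu$ having at least two nonzero parts both sides vanish by Lemma \ref{lem:generalized pirashvili}, and for $G=S^{p^r}$ the statement is exactly Proposition \ref{freeness of Ext}. You instead re-prove the reduction concretely: you twist an injective coresolution $J^\bullet$ of $G$ by sums of $S^\mu$, run the hypercohomology spectral sequence $E_1^{p,q}=\Ext^q_{\quantumfunctor{}}(I^{(r+s)_q},J^{p(s)_q})\Rightarrow\Ext^{p+q}_{\quantumfunctor{}}(I^{(r+s)_q},G^{(s)_q})$, and use the chain-level Yoneda action of $\sigma(E_s)$ together with the rank-one freeness of Proposition \ref{freeness of Ext} (generators $\phi^{(s)_q}$ in the row $q=0$) to force $E_2$-degeneration, then identify the resulting isomorphism with $\Psi$ via the bottom-row edge map. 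The two arguments rest on exactly the same computational inputs (Pirashvili vanishing, Theorem \ref{Ext}, Proposition \ref{freeness of Ext}, exactness and full faithfulness of $-^{(s)_q}$) and both avoid knowing that $\sigma$ is multiplicative. What the paper's route buys is brevity and modularity — the formal proposition is stated once and the homological bookkeeping is delegated; what your route buys is self-containedness and transparency about where freeness over $\sigma(E_s)$ kills the higher differentials, at the price of the two verifications you yourself flag: that the Yoneda action is realized at the level of the double complex (so that every $d_\nu$ is $\sigma(E_s)$-linear, which is standard via lifting classes of $E_{r+s}$ to chain maps of a projective resolution of $I^{(r+s)_q}$), and the filtration/edge-map check that the degenerate spectral sequence really identifies the specific map $\Psi$ (i.e.\ that $x^{(s)_q}$ lies in filtration $p$ with symbol $x$ in $E_\infty^{p,0}$), rather than merely an abstract isomorphism of graded vector spaces. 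Both checks go through, so your outline is a viable, if longer, alternative proof.
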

This follows from a more general fact :
\begin{proposition}\label{prop:formalisation}
    Let $\field$ be a field, let $\mathcal{C}_1, \mathcal{C}_2$ be two $\field$-linear abelian categories with enough projectives and injectives, and let $F : \mathcal{C}_1 \to \mathcal{C}_2$ be an exact functor (thus $F$ induces a morphism on $\Ext$-groups). Let $x \in \mathcal{C}_1$. Suppose we are given a graded space $E$ and a graded linear map $\sigma : E \to \Ext_{\mathcal{C}_2}(F(x),F(x))$ such that the map
    \[
        \begin{array}{rcl}
             \Psi_y : \Ext_{\mathcal{C}_1}(x,y) \otimes E & \to & \Ext_{\mathcal{C}_2}(F(x),F(y)) \\
             f \otimes a &  \mapsto & F(f) \cdot \sigma(a)
        \end{array}
    \]
    is a graded isomorphism whenever $y$ is injective. Then this is a graded isomorphism for any $y \in \mathcal{C}_1$.
\end{proposition}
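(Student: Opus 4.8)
The plan is to view both sides, as functors of the variable $y$, as cohomological $\delta$-functors from $\mathcal{C}_1$ to graded $\field$-vector spaces, to observe that $\Psi$ is a morphism of such $\delta$-functors, and then to run a dimension-shifting induction, using the hypothesis only through the fact that $\Psi_y$ is an isomorphism on injective $y$. (One may assume $E$ is concentrated in non-negative degrees, which is the only case we use.)

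First I would set up the $\delta$-functor structures. Put $T^k(y)=\bigoplus_{i+j=k}\Ext^i_{\mathcal{C}_1}(x,y)\otimes_\field E^j$ and $S^k(y)=\Ext^k_{\mathcal{C}_2}(F(x),F(y))$. Since $\mathcal{C}_1$ has enough injectives, $\Ext^*_{\mathcal{C}_1}(x,-)$ is a $\delta$-functor, and tensoring a long exact sequence of $\field$-vector spaces with $E$ preserves exactness, so $T^*$ is a $\delta$-functor whose connecting maps raise the total degree by one. As $F$ is exact, it sends a short exact sequence $0\to y\to J\to z\to 0$ in $\mathcal{C}_1$ to a short exact sequence $0\to F(y)\to F(J)\to F(z)\to 0$ in $\mathcal{C}_2$, and composing with the $\delta$-functor $\Ext^*_{\mathcal{C}_2}(F(x),-)$ (here one uses enough injectives in $\mathcal{C}_2$) makes $S^*$ a $\delta$-functor.

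Next I would check that $\Psi=(\Psi_y)_y$ is a morphism of $\delta$-functors $T^*\to S^*$. Naturality in $y$ is immediate from the functoriality of $F$. For the compatibility with connecting homomorphisms, note that $\Psi_y$ is the composite of the map $f\otimes a\mapsto F(f)\otimes a$ induced on $\Ext$-groups by the exact functor $F$ (tensored with $\mathrm{id}_E$), followed by the right Yoneda multiplication $\xi\otimes a\mapsto \xi\cdot\sigma(a)$ by the fixed, $y$-independent classes $\sigma(a)\in\Ext^*_{\mathcal{C}_2}(F(x),F(x))$. The first map commutes with the connecting homomorphisms because $F$ is additive and exact, so the image under $F$ of the Yoneda class of an extension is the class of the image extension; the second commutes with them because the connecting homomorphisms of $\Ext^*_{\mathcal{C}_2}(F(x),-)$ act only on the target variable, hence commute with Yoneda multiplication in the fixed source variable $F(x)$. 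This verification is the one point that needs genuine care; the rest is formal.

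Finally the dimension shift. Let $H(k)$ be the assertion that $\Psi_y$ is an isomorphism in total degree $k$ for every $y\in\mathcal{C}_1$. For $k<0$ this is vacuous, and for $k=0$ one chooses the start $0\to y\to J^0\to J^1$ of an injective coresolution of $y$; applying $F$ and using the left exactness of $\Hom$, both $T^0(y)$ and $S^0(F(y))$ are identified, compatibly with $\Psi$, with the kernel of the map induced on the $J^0$-terms by $J^0\to J^1$, and $\Psi$ is an isomorphism on these injective terms by hypothesis, so $H(0)$ holds. Assume $H(k)$ and fix $y$; choose an exact sequence $0\to y\to J\to z\to 0$ with $J$ injective and consider the morphism of associated long exact $\delta$-sequences. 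In the four-term exact segment $T^k(J)\to T^k(z)\to T^{k+1}(y)\to T^{k+1}(J)$ the maps $\Psi^k_J$ and $\Psi^{k+1}_J$ are isomorphisms by hypothesis and $\Psi^k_z$ is an isomorphism by $H(k)$, so the four lemma shows $\Psi^{k+1}_y$ is a monomorphism; as $y$ was arbitrary, $\Psi^{k+1}_w$ is a monomorphism for every $w$. Then in the five-term exact segment $T^k(J)\to T^k(z)\to T^{k+1}(y)\to T^{k+1}(J)\to T^{k+1}(z)$ the map $\Psi^k_z$ is surjective by $H(k)$, $\Psi^{k+1}_J$ is an isomorphism by hypothesis, and $\Psi^{k+1}_z$ is a monomorphism by what we just proved, so the surjectivity half of the five lemma shows $\Psi^{k+1}_y$ is an epimorphism. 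Hence $\Psi^{k+1}_y$ is an isomorphism for every $y$, i.e.\ $H(k+1)$ holds, and the induction establishes the proposition. The main obstacle is the middle step, checking that $\Psi$ respects connecting homomorphisms; everything else is bookkeeping with long exact sequences and the four/five lemmas.
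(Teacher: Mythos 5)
Your proof is correct, and it is essentially the argument the paper intends: the paper's own ``proof'' merely points to Giordano's Proposition 4.6 (noting that $\sigma$ need not be an algebra map), and that argument is exactly your dimension-shifting induction with both sides viewed as $\delta$-functors in $y$, the only delicate point being the one you isolate — that the connecting maps commute both with the map induced by the exact functor $F$ and with right Yoneda multiplication by the fixed classes $\sigma(a)$ (the latter being the MacLane fact the paper invokes elsewhere). Your standing assumption that $E$ is concentrated in non-negative degrees costs nothing, since otherwise the hypothesis at injectives forces $\Hom_{\mathcal{C}_1}(x,J)\otimes E^{j}=0$ for $j<0$ and all injectives $J$, hence $x=0$ and the statement is trivial.
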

\begin{proof}
    The proof of \cite[proposition 4.6]{giordano2023additive} works the same in our more general context. The hypothesis that $\sigma$ is an algebra morphism is not used.
\end{proof}

\begin{proof}[Proof of theorem \ref{final theorem}]
    By proposition \ref{prop:formalisation}, we only need to prove theorem \ref{final theorem} when $G$ is injective.
    It is enough to consider the case $G = S^{\mu}$. If $\mu \neq (p^r)$, then both the source and target of $\Psi$ are zero, by Pirashvili's vanishing lemma \ref{lem:generalized pirashvili}, hence there is nothing to prove. Otherwise, if $\mu = (p^r)$, then this follows directly from proposition \ref{freeness of Ext}.
\end{proof}
We end this paper with some examples of concrete computations in small characteristic.
\begin{example}
    We can find back theorem \ref{Ext} with this formula. For all $r\ge 1$ and all $s\ge 0$ there is a graded isomorphism
    $$\Ext^k_{\quantumfunctor{}}(I^{(r+s)_q},S^{p^{r}\,(s)_q})\simeq 
    \begin{cases}
    \field & \text{if $k=2p^r i$ for $0\le i<p^{s-1}\ell$,}\\
    0 & \text{otherwise.}
    \end{cases}
    $$
    This follows directly from 
    \[
        \Ext^k_{\quantumfunctor{}}(I^{(r+s)_q},S^{p^{r}\,(s)_q}) \simeq \bigoplus_{i + p^r j = k} \Ext^i_{\polyfunctor{}_1}(I^{(r)},S^{p^r}) \otimes E_s^{j} \simeq \bigoplus_{p^r j = k} E^j_s.
    \]
    The last isomorphism comes from the fact that $\Ext^i_{\polyfunctor{}_1}(I^{(r)},S^{p^r})$ is equal to $0$ if $i \neq 0$, and equal to $\field$ if $i=0$. In the same fashion,
    $$\Ext^k_{\quantumfunctor{}}(I^{(r+s)_q},\Lambda^{p^{r}\,(s)_q})\simeq 
    \begin{cases}
    \field & \text{if $k=2p^r i + p^r-1$ for $0\le i<p^{s-1}\ell$,}\\
    0 & \text{otherwise.}
    \end{cases}
    $$
\end{example}

\begin{remark}
    In characteristic $p>0$, the following conjecture is a consequence of conjecture \ref{conj}.
    \begin{conjecture}\label{conj pos}
        For all strict polynomial functors $F,G$, there is a graded isomorphism
        \[
        \Ext^k_{\quantumfunctor{}}(F^{(s)_q},G^{(s)_q}) \simeq \bigoplus_{i+j=k} \Ext^i(F,G^j(E_s \otimes I)).
        \]
        where $E_s = \Ext_{\quantumfunctor{}}^*(I^{(s)_q},I^{(s)_q})$.
    \end{conjecture}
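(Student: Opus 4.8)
The plan for attacking Conjecture~\ref{conj pos} is to adapt, to the quantum setting, the strategy used classically to compute extensions between Frobenius-twisted strict polynomial functors by means of Troesch complexes \cite{touze2012troesch,touze2018cohomology}, bootstrapping from the results of this paper. Fix a homogeneous strict polynomial functor $F$ of degree $d$, and consider, as functors of $G\in\polyfunctor{d}_1$, the two assignments
\[
    G\longmapsto \Ext^*_{\quantumfunctor{}}(F^{(s)_q},G^{(s)_q}),\qquad
    G\longmapsto \bigoplus_{i+j=*}\Ext^i_{\polyfunctor{}_1}\bigl(F,G^j(E_s\otimes I)\bigr),
\]
where $E_s=\Ext^*_{\quantumfunctor{}}(I^{(s)_q},I^{(s)_q})$. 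Both are cohomological $\delta$-functors in $G$: the first because $-^{(s)_q}$ is exact, the second because $G\mapsto G^j(E_s\otimes I)$ is exact (being a base change along the exact parametrization functor of \cite[section 2.5]{touze2012troesch}) while $\Ext^i_{\polyfunctor{}_1}(F,-)$ is a $\delta$-functor. It then suffices to produce a morphism of $\delta$-functors between them which is an isomorphism on the injective cogenerators $G=S^{\mu}$, $\mu$ a partition of $d$: the general case follows by dimension shifting, exactly as in Proposition~\ref{prop:formalisation}. The morphism itself should be assembled from the cup product (Definition~\ref{def:cup}) and the Yoneda action of $E_s$ on $\Ext^*_{\quantumfunctor{}}(F^{(s)_q},-)$, generalizing the map $\Psi$ of Theorem~\ref{final theorem}.

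The crux is the injective case. For $G=S^{\mu}=S^{\mu_1}\otimes\cdots\otimes S^{\mu_n}$ one first needs the vanishing $\Ext^*_{\quantumfunctor{}}(F^{(s)_q},S^{\mu(s)_q})=0$ unless the weights of $F$ fit into the box prescribed by $\mu$ (for $F$ additive this forces $\mu=(d)$), which is exactly the content of the quantum Pirashvili lemma~\ref{lem:generalized pirashvili}; and then an explicit computation in the surviving cases, built from the groups $\Ext^*_{\quantumfunctor{}}(I^{(s)_q},S^{p^{s-j}(j)_q})$ of Theorem~\ref{Ext} and their $E_s$-module structure (Propositions~\ref{presentation E_r} and~\ref{freeness of Ext}). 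This is where a \emph{quantum Troesch complex} is required: an explicit, small injective coresolution $S^{\mu(s)_q}\hookrightarrow B^{\bullet}$ in $\quantumfunctor{}$, with terms tensor products of quantum symmetric powers, refining the Koszul and quantum de Rham complexes constructed above. Given such a resolution, $\Ext^*_{\quantumfunctor{}}(F^{(s)_q},S^{\mu(s)_q})$ is the cohomology of $\Hom_{\quantumfunctor{}}(F^{(s)_q},B^{\bullet})$, which by full faithfulness of the Frobenius and Propositions~\ref{property of symmetric power} and~\ref{property of quantum Frobenius twist} is expressed through the weight spaces of $F$ alone, i.e.\ through $F$ applied to a graded vector space built from the cohomology of the coefficient complex; that cohomology is controlled by $E_s$ (via $E_s^{\otimes d}$ for a functor of degree $d$), and matching the outcome with $(S^{\mu})^j(E_s\otimes I)$ is then a naturality-preserving formal manipulation.

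The main obstacle --- and the reason this statement is left as a conjecture --- is precisely the construction and homological control of the quantum Troesch complex: the classical construction rests on a delicate analysis of a bar-type resolution, and transporting it to the braided category $\quantumfunctor{}$ while tracking the $q$-signs is a substantial task, deferred to \cite{deturck2}. A secondary obstacle lies in the coefficients themselves: Proposition~\ref{presentation E_r} determines $E_r$ only up to a few structure constants $\lambda_{r,i}$, and a clean, in particular iterable, form of the isomorphism presupposes that $E_r$ is graded commutative --- a point we could not settle, as it would require twisting once more the extensions of $I^{(r)_q}$ with itself. A complete proof of Conjecture~\ref{conj pos} would thus combine the $\delta$-functor reduction above, the quantum Troesch complex of \cite{deturck2}, and a resolution of the graded-commutativity question for $E_r$.
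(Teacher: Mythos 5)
The statement you were asked to prove is left as a conjecture in the paper: there is no proof of Conjecture~\ref{conj pos} to compare against. The paper's only supporting evidence is (i) the observation that in characteristic $p>0$ it would follow from the more general Conjecture~\ref{conj} together with the classical result \cite[Theorem 4.6]{touze2018cohomology}, and (ii) the special case $F=I^{(r)}$, which is actually established by combining Theorem~\ref{final theorem} with Remark~\ref{rmk:other parametrization}. Your text, as you say yourself, is a programme rather than a proof, and its general direction --- reduction to injective $G$, explicit control of $\Ext^*_{\quantumfunctor{}}(F^{(s)_q},S^{\mu(s)_q})$ via a quantum analogue of Troesch's coresolutions, and the unresolved graded-commutativity of $E_r$ --- is consistent with what the paper itself announces as future work in \cite{deturck2}. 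So the honest verdict is that you have correctly identified the missing ingredients, not supplied them; this matches the status of the statement in the paper.

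Two caveats on the details of your plan. First, the reduction to injectives cannot be invoked ``exactly as in Proposition~\ref{prop:formalisation}'': that proposition requires the comparison map to land in a target of the rigid product form $\Ext_{\mathcal{C}_1}(x,y)\otimes E$ with $E$ independent of $y$, whereas the right-hand side of Conjecture~\ref{conj pos} is $\bigoplus_{i+j=k}\Ext^i_{\polyfunctor{}_1}(F,G^j(E_s\otimes I))$, which does not factor in this way for non-additive $F$ (it does for $F=I^{(r)}$, which is precisely why the paper can apply the proposition in Theorem~\ref{final theorem}); one would need both a construction of the comparison morphism of $\delta$-functors for general $F$ and a genuine extension of the bootstrapping argument of \cite{giordano2023additive} to parametrized coefficients. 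Second, for the injective case itself, the quantum Pirashvili lemma~\ref{lem:generalized pirashvili} and Theorem~\ref{Ext} only dispose of the situation where $F$ has few nonzero weight parts (e.g.\ $F$ additive); for a general degree-$d$ functor the groups $\Ext^*_{\quantumfunctor{}}(F^{(s)_q},S^{\mu(s)_q})$ are nonzero for many $\mu$ and computing them is essentially the whole content of the conjecture. Both issues are subsumed in the gap you already name (the quantum Troesch complex and the structure constants of $E_r$), but they should be flagged as additional steps rather than routine formalities.
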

    This follows from conjecture \ref{conj} using the analogous result for the classical case (see \cite[Theorem 4.6]{touze2018cohomology}).
\end{remark}

\begin{remark}\label{rmk:other parametrization}
    For any strict polynomial functor $F$, and any graded space $V^*$, we can define graded strict polynomial functors $F^V_j = F^j(\Hom_{\field}(V^*,I))$ for $j\geq 0$, in a similar way as in definition \ref{def:G(V*I)} and \cite[section 2.5]{touze2012troesch}. Then, there is a bigraded isomorphism:
    \[
        \Ext^i(F,G^j(V^* \otimes I)) \cong \Ext^i(F^V_j,G).
    \]
    (see \cite[proposition 7.3]{touze2012troesch}). This can be more practical to use $F^V_j$ than $G^j(V^* \otimes I)$ for some computation.
\end{remark}

\begin{remark}
    The conjecture \ref{conj pos} is verified in the case where $F$ is of the form $I^{(r)_q}$, as a consequence of theorem \ref{final theorem}. To see why, we use remark \ref{rmk:other parametrization}. In the particular case $F = I^{(r)}$ and with this remark, conjecture \ref{conj pos} asks for the existence of a graded isomorphism
    \[
        \Ext^k_{\quantumfunctor{}}(I^{(r+s)_q},G^{(s)_q}) \cong \bigoplus_{i+j=k}  \Ext^i_{\polyfunctor{}_1}((I^{(r)})^{E_s}_j, G).
    \]
    Since 
    \[
    (I^{(r)})^{E_s}_j \cong \left \{ \begin{array}{cl}
        I^{(r)} & \mbox{if } 2p^r \mbox{ divides } j \mbox{ and } j < 2p^{r+s-1}l, \\
        0 & \mbox{otherwise},
    \end{array} \right .
    \]
    the conjecture asks the existence of a graded isomorphism
    \[
        \Ext^k_{\quantumfunctor{}}(I^{(r+s)_q},G^{(s)_q}) \cong \bigoplus_{\substack{i+2p^rj=k \\ 0 \leq j < p^{s-1}l}} \Ext^i_{\polyfunctor{}_1}(I^{(r)}, G) \cong \Ext^*_{\polyfunctor{}_1}(I^{(r)},G) \otimes (\Ext^*_{\quantumfunctor{}}(I^{(s)_q},I^{(s)_q}))^{(r)}.
    \]
    Such an isomorphism is given by theorem \ref{final theorem}.
\end{remark}

\bibliography{biblio.bib}
\bibliographystyle{plain}

\end{document}